\documentclass[11pt,leqno]{amsart}
\usepackage{epsfig}
\usepackage{amssymb}
\usepackage{amscd}
\usepackage{amsaddr}
\usepackage[all,cmtip,matrix,arrow]{xy}
\usepackage{graphicx}
\usepackage{mathabx}
\usepackage{skak}

\sloppy

\setlength{\textheight}{8.5in} \setlength{\textwidth}{6in}
\setlength{\topmargin}{0in} \setlength{\oddsidemargin}{0.25in}
\setlength{\evensidemargin}{0.25in}

\mathsurround=2pt

\sloppy

\newtheorem{theo}{Theorem}[section]
\newtheorem{lemma}[theo]{Lemma}
\newtheorem{defi}[theo]{Definition}
\newtheorem{prop}[theo]{Proposition}
\newtheorem{conj}[theo]{Conjecture}
\newtheorem{cor}[theo]{Corollary}
\newtheorem{remark}[theo]{Remark}

\numberwithin{equation}{section}

\mathchardef\mhyphen="2D

\def\A{{\mathbb A}}

\def\bL{\mathbb{L}}

\def\Z{\mathbb{Z}}

\raggedbottom
\bibcite{...}{...} 

\def\coh{\operatorname{coh}}

\def\bD{{\mathbf D}}
\def\bR{{\mathbf R}}
\def\bL{{\mathbf L}}

\def\pre-tr{\operatorname{pre-tr}}

\def\Hom{\operatorname{Hom}}



\newcommand{\tens}[1]{%
  \mathbin{\mathop{\otimes}\displaylimits_{#1}}%
}
\newcommand{\xto}{\xrightarrow}
\newcommand{\xlto}{\xleftarrow}
\newcommand{\hto}{\hookrightarrow}
\newcommand{\leto}{\leftarrow}

\newcommand{\QCoh}{\operatorname{QCoh}}
\newcommand{\Coh}{\operatorname{Coh}}

\newcommand{\hproj}{\operatorname{h-proj}}
\newcommand{\Acycl}{\operatorname{Acycl}}
\newcommand{\Inj}{\operatorname{Inj}}
\newcommand{\Flasque}{\operatorname{Flasque}}

\newcommand{\veps}{\varepsilon}

\newcommand{\mk}{\mathrm k}

\newcommand{\cJ}{{\mathcal J}}

\newcommand{\cF}{{\mathcal F}}
\newcommand{\cG}{{\mathcal G}}
\newcommand{\cO}{{\mathcal O}}

\newcommand{\cM}{{\mathcal M}}

\newcommand{\cD}{{\mathcal D}}

\newcommand{\cA}{{\mathcal A}}
\newcommand{\cB}{{\mathcal B}}
\newcommand{\cI}{{\mathcal I}}
\newcommand{\cC}{{\mathcal C}}
\newcommand{\cE}{{\mathcal E}}

\newcommand{\cS}{{\mathcal S}}
\newcommand{\cT}{{\mathcal T}}
\newcommand{\cK}{{\mathcal K}}
\newcommand{\cH}{{\mathcal H}}

\newcommand{\cHom}{\mathcal Hom}

\newcommand{\mD}{{\mathfrak D}}

\newcommand{\un}{\underline}

\newcommand{\Tot}{\operatorname{Tot}}

\newcommand{\Perf}{\operatorname{Perf}}
\newcommand{\perf}{\operatorname{perf}}

\newcommand{\supp}{\operatorname{Supp}}

\newcommand{\coker}{\operatorname{Coker}}

\newcommand{\im}{\operatorname{Im}}

\newcommand{\colim}{\operatorname{colim}}

\newcommand{\Mod}{\operatorname{Mod}}
\newcommand{\Dif}{\operatorname{Dif}}

\newcommand{\even}{\operatorname{even}}
\newcommand{\odd}{\operatorname{odd}}

\newcommand{\Spec}{\operatorname{Spec}\,}

\newcommand{\Ho}{\operatorname{Ho}}

\newcommand{\id}{\operatorname{id}}

\newcommand{\dgalg}{\operatorname{dgalg}}
\newcommand{\dgcat}{\operatorname{dgcat}}

\newcommand{\pr}{\operatorname{pr}}

\newcommand{\com}{\operatorname{Com}}


\usepackage{epsf}
\usepackage{amscd}

\newcommand{\one}{\mathbf{1}}

\title[Homotopy finiteness of some DG categories from algebraic geometry]
{Homotopy finiteness of some DG categories from algebraic geometry}

\author{Alexander I. Efimov}
\address{Steklov Mathematical Institute of RAS, Gubkin str. 8, GSP-1, Moscow 119991, Russia\\
National Research University Higher School of Economics, Russian Federation}
\email{efimov@mccme.ru}
\thanks{The author is partially supported by Laboratory of Mirror Symmetry NRU HSE, RF government  grant, ag. N 14.641.31.0001}
\thanks{MSC: 14E05, 18E30, 18E35}

\begin{document}

\begin{abstract}In this paper, we prove that the bounded derived category $D^b_{coh}(Y)$ of coherent sheaves on a separated scheme $Y$ of finite type over a field $\mk$
of characteristic zero is homotopically finitely presented. This confirms a conjecture of Kontsevich. We actually prove a stronger statement: $D^b_{coh}(Y)$ is equivalent to a DG quotient $D^b_{coh}(\tilde{Y})/T,$ where $\tilde{Y}$ is some smooth and proper variety, and the subcategory $T$ is generated by a single object.

The proof uses categorical resolution of singularities of Kuznetsov and Lunts \cite{KL}, and a theorem of Orlov \cite{Or} stating that the class of geometric smooth and proper DG categories is stable under gluing. 


We also prove the analogous result for $\Z/2$-graded DG categories of coherent matrix factorizations on such schemes. In this case instead of $D^b_{coh}(\tilde{Y})$ we have a semi-orthogonal gluing of a finite number of DG categories of matrix factorizations on smooth varieties, proper over $\A_{\mk}^1$.
\end{abstract}

\keywords{derived categories, differential graded categories, homotopy finiteness, Verdier localization, resolution of singularities}

\maketitle

\tableofcontents

\section{Introduction}

According to one of the approaches to noncommutative algebraic geometry, a non-commutative space is treated as a triangulated DG (or $A_{\infty}$-) category which admits a single generator \cite{KS, Or}.

By a theorem of Bondal and Van den Bergh \cite[Theorem 3.1.1]{BvdB} and by the results of Keller \cite{Ke}, for any separated scheme $X$ of finite type over a field $\mk,$
there is a DG $\mk$-algebra $A,$ defined up to a Morita equivalence, such that $D(\QCoh X)\simeq D(A).$  Here 
$D(A)$ is the derived category of (right) DG $A$-modules. This equivalence identifies the subcategories of perfect complexes (which are exactly the compact objects): $D_{perf}(X)\simeq D_{perf}(A).$ Let us denote by $\Perf(X)$ the DG enhancement (see \cite{BK}) of the triangulated category $D_{perf}(X).$ The DG category $\Perf(X)$ is treated as a non-commutative space associated with $X.$

We recall the notions of smoothness and properness for DG categories.

\begin{defi}\cite{KS} 1) A small DG category $\cC$ over $\mk$ is smooth if the diagonal $\cC\mhyphen\cC$-bimodule is perfect.

2) $\cC$ is called proper if for $X,Y\in\cC$ the complex $\cC(X,Y)$ is perfect over $\mk.$\end{defi}

It is known (\cite[Proposition 3.30]{Or}, \cite[Proposition 3.13]{L}) that $X$ is smooth (resp. proper) if and only if the DG category $\Perf(X)$ is smooth (resp. proper). Hence, these basic geometric properties of $X$ are reflected by the DG category $\Perf(X).$

In this paper we will be interested in the larger DG category $D^b_{coh}(X)\supset\Perf(X)$ --- an enhancement of the derived category of coherent sheaves. For this category the situation is quite different. Namely, the following theorem has been proven by V.~Lunts.

\begin{theo}\cite[Theorem 6.3]{L} Let $X$ be a separated scheme of finite type over a perfect field $\mk.$
Then the DG category $D^b_{coh}(X)$ is smooth.\end{theo}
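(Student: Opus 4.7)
The plan is to prove the statement by Noetherian induction on $\dim X$, with resolution of singularities as the main geometric input. For the base case $\dim X \le 0$, the scheme $X$ is a finite disjoint union of spectra of Artinian local $k$-algebras; since $k$ is perfect, each residue field is a finite separable extension of $k$, and the direct sum of the residue fields is a compact generator of $D^b_{coh}(X)$ whose endomorphism DG algebra is finite-dimensional. Smoothness of such a DG algebra is checked directly, for instance by exhibiting an explicit finite-length resolution of the diagonal bimodule.

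For the inductive step, assume the statement for all separated schemes of finite type over $k$ of dimension smaller than $\dim X$, and choose a proper birational morphism $\pi : \tilde X \to X$ with $\tilde X$ smooth, together with a closed subscheme $Z \subsetneq X$ of dimension strictly less than $\dim X$ such that $\pi$ is an isomorphism over $X \setminus Z$ (available over a perfect field of characteristic zero by Hironaka). Let $i : Z \hookrightarrow X$ be the closed immersion. By the inductive hypothesis $D^b_{coh}(Z)$ is smooth, while $D^b_{coh}(\tilde X) = \Perf(\tilde X)$ is smooth because $\tilde X$ is.

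I would then show that $D^b_{coh}(X)$ is generated, as a thick triangulated subcategory, by the images of $\pi_*$ and $i_*$: for any $F \in D^b_{coh}(X)$ the unit map $F \to \bR\pi_* \bL\pi^* F$ has cone set-theoretically supported on $Z$, and complexes with set-theoretic support on $Z$ can be filtered (by the usual devissage along the ideal sheaf of $Z$) into $i_*$-images of objects of $D^b_{coh}(Z')$ for some scheme-theoretic thickening $Z'$ of $Z$ of the same dimension. From such a triangulated generation, one wants to extract a finite filtration of the diagonal $D^b_{coh}(X)$-bimodule whose subquotients are induced from the diagonal bimodules of $\Perf(\tilde X)$ and of $D^b_{coh}(Z')$, both of which are perfect, and thereby conclude that the diagonal of $D^b_{coh}(X)$ is itself perfect.

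The main obstacle lies in this last step: triangulated generation by perfect bimodules is strictly weaker than the diagonal being perfect, and one needs a controlled, finite-length resolution of the diagonal bimodule rather than just a thick-subcategory statement. This is precisely where additional input is required beyond the bare geometric resolution, for instance (a forerunner of) the categorical resolution of Kuznetsov--Lunts alluded to in the abstract, which realises $D^b_{coh}(X)$ as an explicit localisation of a smooth and proper DG category assembled iteratively from $\Perf(\tilde X)$ and $D^b_{coh}(Z')$. Perfectness of the diagonal, and hence smoothness of $D^b_{coh}(X)$, should then follow by careful bookkeeping of the gluing bimodules that arise in this categorical resolution, each of which is perfect by the smoothness of its endpoints.
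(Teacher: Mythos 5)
The paper does not prove this theorem itself; it is cited from Lunts \cite{L}. So what follows compares your sketch with the known argument, not with a proof inside this paper.

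The decisive problem with your proposal is the appeal to resolution of singularities. You invoke Hironaka, which requires characteristic zero, but the theorem is stated over an arbitrary perfect field, including positive characteristic, where resolution is not available. Your argument therefore cannot establish the statement in the generality asserted. Lunts' proof avoids geometric resolution entirely, and the present paper exhibits the shape of that argument in Theorem \ref{th:smoothness_MF}, the matrix-factorization analogue. Schematically: Grothendieck duality via $F\mapsto\bR\cHom(F,\bD_X)$ gives an anti-self-equivalence of $\mD^b(X)$; a K\"unneth-type Morita equivalence identifies $D(\mD^b(X)\otimes\mD^b(X)^{op})$ with $D^{co}(\QCoh(X\times_k X))$; and under this equivalence the diagonal bimodule corresponds to $\Delta_*\bD_X$, which lies in $D^b_{coh}(X\times_k X)$ and is therefore compact in the coderived category by Theorem \ref{th:D^co-comp-gen-ab}. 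Hence the diagonal is perfect and $\mD^b(X)$ is smooth, with no induction on dimension and no resolution of singularities.

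What you wrote is, in fact, essentially the strategy the paper uses for its main result in characteristic zero, Theorem \ref{th:smooth_compactification}, which proves something strictly stronger than smoothness, namely the existence of a smooth categorical compactification and hence homotopy finiteness. You also correctly flag the real hurdle in your own sketch: triangulated generation of $D^b_{coh}(X)$ by the essential images of $\pi_*$ and $i_*$ does not by itself yield a finite resolution of the diagonal bimodule. Closing that gap is exactly what the gluing and localization machinery of \cite{KL} and of Theorem \ref{th:main_localization} does, realizing $\mD^b(X)$ as a localization of an explicitly smooth and proper DG category, after which smoothness descends along the homological epimorphism. That machinery delivers far more than smoothness, but at the cost of the characteristic-zero hypothesis. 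For the statement at hand, over a general perfect field, the duality--K\"unneth route is the one that works.
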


It is quite surprising: the scheme $X$ can have arbitrary singularities and even be non-reduced, but the DG category $D^b_{coh}(X)$ is always smooth.

The class of smooth DG categories contains some "large" examples. For example, the field of rational functions $\mk(x_1,\dots,x_n)$ is a smooth DG algebra. It is natural to try to impose some conditions on a smooth DG category to be of finite type in an appropriate sense. 

To\"en and Vaqui\'e \cite{TV} introduced the class of so-called homotopically finitely presented (hfp) DG categories.

\begin{defi}\cite{TV} 1) A DG algebra $A$ is hfp if in the homotopy category of DG algebras $A$ is a retract of a free graded algebra $\mk\langle x_1,\dots,x_n\rangle$ with differential satisfying
$$d x_i\in \mk\langle x_1,\dots,x_{i-1}\rangle,\quad 1\leq i\leq n.$$

2) A small DG category is hfp if it is Morita equivalent to a DG algebra which is hfp.\end{defi}

See Section \ref{sec:Ho_DG_cat} for a more detailed discussion, in particular for the notion of a retract.

The hfp DG categories play the same role in the Morita homotopy category of DG categories as perfect $\cA$-modules in the derived category of all $\cA$-modules (where $\cA$ is some small DG category). Also, their analogue in the homotopy category of CW complexes is the category of the so-called finitely dominated spaces, which are homotopy retracts of finite CW complexes, see \cite{Wh1, Wh2, W}.

The basic facts about hfp DG categories are recalled in Section \ref{sec:Ho_DG_cat}. Here we mention that if a DG category is hfp, then it is smooth. On the other hand, if a DG category is smooth and proper, then it is hfp. 





Our main result is the following.

\begin{theo}\label{intro:hfp_coh}Let $Y$ be a separated scheme of finite type over a field $\mk$ of characteristic zero. Then the DG category $D^b_{coh}(Y)$ is hfp.\end{theo}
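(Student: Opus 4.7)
The plan is to represent $D^b_{coh}(Y)$ as a \emph{smooth categorical compactification}: a DG quotient $\widetilde{\mathcal{C}}/\langle E\rangle$ of a smooth and proper DG category $\widetilde{\mathcal{C}}$ by the thick subcategory generated by a single object $E$. Granted such a presentation, the theorem follows from two inputs. First, by the Toën--Vaquié result stated above, any smooth and proper DG category is hfp; in particular $\widetilde{\mathcal{C}}$ is hfp. Second, I need the general principle that the DG quotient of an hfp DG category by a subcategory generated by one object is again hfp. On a DG-algebra presentation $A \simeq k\langle x_1, \dots, x_n\rangle$ (up to retract) this is concrete: killing $E$ amounts to adjoining one new generator $h$ of degree $-1$ with $dh$ equal to the idempotent in $A$ corresponding to $E$, and a single semi-free extension of this form preserves the hfp property.

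The core of the proof is the construction of the pair $(\widetilde{\mathcal{C}}, E)$. For this I would use the categorical resolution of singularities of Kuznetsov--Lunts; its dependence on Hironaka is precisely why the theorem is stated in characteristic zero. I would first choose a Nagata compactification $\overline{Y} \supset Y$ of finite type and a tower of blow-ups with smooth centers $\pi : \widetilde{Y} \to \overline{Y}$ such that $\widetilde{Y}$ is smooth and proper and $\pi$ is an isomorphism over the smooth locus of $\overline{Y}$. The Kuznetsov--Lunts construction, applied step by step along this tower, glues the derived categories of the successive blow-ups along semiorthogonal decompositions to produce a smooth and proper DG category $\widetilde{\mathcal{C}}$ equipped with a DG functor $\widetilde{\mathcal{C}} \to D^b_{coh}(\overline{Y})$, essentially surjective and with explicit kernel supported along the exceptional locus. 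Composing with restriction to $Y \subset \overline{Y}$ yields a DG functor $\widetilde{\mathcal{C}} \to D^b_{coh}(Y)$ whose kernel is generated by finitely many natural objects: the generators of the Kuznetsov--Lunts kernel together with generators coming from the complement $\overline{Y}\setminus Y$.

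The main obstacle is reducing this finite family of kernel generators to a \emph{single} object $E$. I would take $E$ to be a direct sum of sufficiently high twists of these natural generators by powers of a very ample line bundle on $\widetilde{Y}$, and use Beilinson-type or Koszul resolutions on the relevant smooth centers to recover each individual generator from $E$ inside the thick subcategory it spans; this step requires careful bookkeeping of positivity along the tower of blow-ups. In parallel, smoothness and properness of $\widetilde{\mathcal{C}}$ must be verified inductively at every stage of the Kuznetsov--Lunts gluing, which is the principal technical borrowing from their paper and proceeds by analysis of the semiorthogonal decompositions associated to blow-ups with smooth centers. Making these two requirements — smoothness/properness of $\widetilde{\mathcal{C}}$ and single-object generation of the kernel — simultaneously compatible throughout the induction is where I would expect the real work to lie.
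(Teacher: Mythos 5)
Your high-level plan coincides with the paper's: construct a smooth categorical compactification of $D^b_{coh}(Y)$ and then invoke To\"en--Vaqui\'e plus the stability of homotopy finiteness under DG quotient by one object. But there are two places where your account misrepresents the difficulty, and one of them is a genuine gap.

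The central gap is your treatment of the localization property. You write that the Kuznetsov--Lunts construction yields a DG functor $\widetilde{\cC}\to D^b_{coh}(\overline{Y})$ that is ``essentially surjective and with explicit kernel,'' as if the fact that this functor is a \emph{localization} (on perfect complexes, up to summands) were part of what they prove. It is not. Kuznetsov--Lunts produce a categorical \emph{resolution}, i.e.\ a fully faithful functor $\Perf(Y)\to\widetilde{\cC}$; they do not show that the direct-image functor $\pi_*:\widetilde{\cC}\to D^b_{coh}(Y)$ is a Verdier quotient, which is what Definition \ref{def:smooth_cat_comp} requires. Proving this is the technical heart of the present paper. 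It is done by passing to the subcategory with support in the blow-up center, replacing $S$ by an infinite tower of infinitesimal neighborhoods $S_n$, using the Auslander-type nice ringed spaces $\cA_{S_n}$ to reduce a square-zero thickening to the previous step (Lemma \ref{lem:infinitesimal_extension}), and then taking a filtered colimit (Lemma \ref{lem:red_to_neighb}). Theorem \ref{th:main_localization} is where this is carried out, and none of it is available off the shelf in Kuznetsov--Lunts. In particular, the relative version of the Bondal--Orlov conjecture that would make your black-box invocation legitimate (Conjecture \ref{conj:BO}) remains open in general.

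By contrast, the step you flag as ``the main obstacle'' — reducing the finite family of kernel generators to a single object $E$ — is essentially free, and your proposed workaround is both unnecessary and awkward. The kernel at each stage is the image of some $\mD^b(T)$ or $\mD^b(S_0)$ under a quasi-fully-faithful functor into the glued category, and these $T$, $S_0$ are finite-type $k$-schemes. Rouquier's theorem (\cite{R}, quoted as Theorem \ref{th:Rouquier_generator}) gives a classical generator of each such $D^b_{coh}(T)$, so each image is thick-generated by one object, and a finite direct sum of single generators is a single generator. Your ample-twist-plus-Beilinson scheme would in any case run into the problem that the generators do not all live in $D^b_{coh}(\widetilde{Y})$; they live in the various semiorthogonal components (derived categories of the smooth blow-up centers and the Auslander ringed spaces over them), so ``twisting by a very ample line bundle on $\widetilde{Y}$'' is not directly meaningful. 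The clean argument via strong generation sidesteps all the positivity bookkeeping you were anticipating.

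Finally, your reduction to the proper case via Nagata and your description of the one-object quotient on the level of free DG-algebra presentations are both consistent with the paper, though the correct formula is $dh=\id_E$ (the Drinfeld quotient adjoins a contracting homotopy of the identity of $E$), not ``$dh$ equal to the idempotent in $A$ corresponding to $E$.'' The paper handles this more carefully by working in the model category $\dgcat_k(1,2)$ of two-object DG categories in Proposition \ref{prop:quotient_by_1hfp}.
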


The statement of this theorem has been previously conjectured by Kontsevich \cite{Ko}.

We also prove a similar result for coherent matrix factorizations \cite{EP}. For any regular function $W$ on $Y$ we have a $\Z/2$-graded DG category $D^{abs}_{coh}(X,W)$ - an enhancement of the absolute derived category of coherent matrix factorizations of $W.$

\begin{theo}\label{intro:hfp_MF} Let $Y$ be a separated scheme of finite type over a field $\mk$ of characteristic zero, and $W$ a regular function on $Y.$ Then the $\Z/2$-graded DG category $D^{abs}_{coh}(Y,W)$ is hfp.\end{theo}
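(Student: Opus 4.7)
The plan is to mirror the two-stage strategy used for Theorem \ref{intro:hfp_coh}: first establish an abstract ``smooth categorical compactification implies hfp'' result in the $\Z/2$-graded setting, then exhibit such a compactification for $D^{abs}(Y,W)$ by adapting the geometric input (Hironaka + Kuznetsov--Lunts categorical resolution) into the Landau--Ginzburg framework. More precisely, the abstract statement I would isolate and prove is: whenever a $\Z/2$-graded DG category $\cC$ can be written as a DG quotient $\cA/\langle E\rangle$ with $\cA$ smooth and proper and $E\in\cA$ a single object, then $\cC$ is hfp. Granting this general result, the theorem reduces to constructing such a presentation for $D^{abs}(Y,W)$.

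To build the compactification, first apply resolution of singularities to obtain a proper birational morphism $\pi\colon\wt Y\to Y$ with $\wt Y$ smooth, and then embed $\wt Y$ as an open subscheme of a smooth proper $\overline Y$. The pulled-back potential $\pi^\ast W$ will generally not extend to a regular function on $\overline Y$, so I would work with a gauged LG model in which $\overline W$ is realised as a section of an auxiliary line bundle $\cL$ on $\overline Y$; the associated $\Z/2$-graded category $\cA:=D^{abs}(\overline Y,\cL,\overline W)$ is smooth and proper. One then recovers $D^{abs}(Y,W)$ as the DG quotient of $\cA$ by the thick subcategory generated by the finitely many ``correction'' objects coming from matrix factorizations supported on $\overline Y\setminus\wt Y$ together with the kernel of descent along $\pi$ (the matrix-factorization analogue of the Kuznetsov--Lunts categorical resolution). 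Collapsing these finitely many generators to a single object $E\in\cA$ by taking their direct sum produces a smooth categorical compactification $\cA/\langle E\rangle\simeq D^{abs}(Y,W)$.

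Applying the abstract hfp result then finishes the proof. The main obstacle is the general statement that smooth categorical compactifications are hfp: one has to construct an explicit finite semi-free DG model of $\cA/\langle E\rangle$, and because the category is $\Z/2$-graded the standard bounded-below filtration arguments in the $\Z$-graded case do not immediately apply, so extra care is needed to ensure that the tower of cell attachments terminates. A secondary difficulty is the geometric step of producing $(\overline Y,\cL,\overline W)$: one must extend $\pi^\ast W$ to a proper target while preserving smoothness and properness of the resulting DG category \emph{and} ensuring that the kernel of the quotient onto $D^{abs}(Y,W)$ is generated (as a thick subcategory) by finitely many, and ultimately by one, object. Once these two pieces are in place, Theorem \ref{intro:hfp_MF} follows immediately, in parallel with Theorem \ref{intro:hfp_coh}.
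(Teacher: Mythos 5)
Your high-level two-stage strategy matches the paper: first prove that a smooth categorical compactification implies hfp, then construct such a compactification for $\mD^{abs}(Y,W)$. But the key geometric step goes wrong. To handle the fact that $W$ need not extend to a regular function on a proper compactification, you propose to pass to a line-bundle-valued potential $\overline{W}\in\Gamma(\overline{Y},\cL)$ on a smooth proper $\overline{Y}$. This introduces a fundamentally different object: matrix factorizations of a section of a nontrivial line bundle do not form a $\Z/2$-graded DG category over $k$ (the ``differential'' raises degree in $\cL^{\otimes\bullet}$ rather than squaring to a scalar), so $D^{abs}(\overline{Y},\cL,\overline{W})$ is not a smooth proper $\Z/2$-graded DG category in the sense required, and it is unclear how a Drinfeld quotient of it could produce the $\Z/2$-graded $D^{abs}(Y,W)$. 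The paper sidesteps this entirely by a simpler observation: for an LG model, smoothness and properness of $D^{abs}(V,W)$ follows from smoothness of $V$ together with properness of the \emph{morphism} $W\colon V\to\A^1$ (Preygel's theorem, invoked in Proposition \ref{prop:A_S_sm_prop_MF}); the scheme $V$ itself need not be proper. Accordingly, the components of the smooth compactification in Theorem \ref{intro:smooth_compactification}, part 2, are $D^{abs}(V_i,W_i)$ with $W_i\colon V_i\to\A^1$ proper, not LG models on proper varieties, and no auxiliary line bundle ever appears. The reduction to this situation is harmless: replace $Y$ by the closure of the graph of $W$ inside $\overline{Y}_0\times\A^1$ for a Nagata compactification $\overline{Y}_0$ of $Y$; then $W$ extends to a function that is proper as a morphism to $\A^1$, and the restriction to $Y$ is a localization with kernel generated by one object.

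Two further points. Your concern that the ``bounded-below filtration arguments'' break down in the $\Z/2$-graded setting is misplaced: the paper treats $\Z/2$-graded DG categories uniformly as DG categories over $k[u^{\pm 1}]$ with $\deg u=2$, and Proposition \ref{prop:quotient_by_1hfp} (Drinfeld quotient by one object preserves hfp) is proved by a model-category argument --- the Drinfeld quotient functor on $\dgcat_k(1,2)$ preserves finite cellular objects --- which involves no bounded-below filtration and applies verbatim in either grading. Second, the phrase ``collapsing the finitely many correction objects to one by taking their direct sum'' glosses over essentially all of the actual work; in the paper this is the content of the Auslander-type construction on nice ringed spaces (Subsection \ref{ssec:A-construction_MF}) and the categorical blow-up localization theorem in the MF setting (Theorem \ref{th:main_localizationMF}), both of which are substantive.
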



\begin{remark}\label{remark:enhancements_intro} We should  stress that although the triangulated categories $D^b_{coh}(X)$ and $D^{abs}_{coh}(X,W)$ are not known to have a unique enhancement, they both have a natural choice of enhancement. 

Namely, for any small $\mk$-linear abelian category $\cC,$ the category $D^b(\cC)$ can be enhanced by the DG quotient $\com^b(\cC)/\Acycl(\cC),$ where $\com^b(\cC)$ is the DG category of bounded complexes of objects of $\cC$ (with standard $\Hom$-complexes), and $\Acycl(\cC)$ is the full DG subcategory of acyclic complexes. This in particular gives a natural enhancement for $D^b_{coh}(X).$ 

Similarly, there is a natural enhancement of the category $D^{abs}(\cC,W)$ for any small abelian category $\cC$ and an element $W\in Z(\cC)$ of the center of $\cC,$ see Remark \ref{remark:enhancement_for_D^abs}. This gives a natural enhancement for $D^{abs}_{coh}(X,W).$

Remarkable general results on the uniqueness of enhancements have been obtained in the papers \cite{LO, CS}.\end{remark}

There is a particularly nice class of homotopically finite DG categories: those which admit a so-called smooth categorical compactification.

\begin{defi}\label{def:intro_smooth_cat_comp} A smooth categorical compactification of a DG category $\cA$ is a DG quasi-functor $F:\cC\to\cA,$ where the DG category $\cC$ is smooth and proper, the extensions of scalars functor $F^*:\Perf(\cC)\to\Perf(\cA)$ is a localization (up to direct summands), and its kernel is generated by a single object.\end{defi}

The motivation for the term "smooth categorical compactification" is the following. Suppose that $Y$ is smooth, and $\overline{Y}\supset Y$ a usual (algebro-geometric) smooth compactification. Then the restriction functor $D^b_{coh}(\overline{Y})\to D^b_{coh}(Y)$ is a smooth categorical compactification.

One can show that the existence of a smooth categorical compactification implies the homotopy finiteness (see Corollary \ref{cor:sm_comp->hfp}).

We prove a result which is stronger than Theorems \ref{intro:hfp_coh}, \ref{intro:hfp_MF}.

\begin{theo}\label{intro:smooth_compactification}Let $Y$ be a separated scheme of finite type over a field $\mk$ of characteristic zero. Then

1) the DG category $D^b_{coh}(Y)$ has a smooth categorical compactification of the form $D^b_{coh}(\tilde{Y})\to D^b_{coh}(Y),$ where $\tilde{Y}$ is a smooth and proper variety. 

2) for any regular function $W\in\cO(Y)$ the D($\Z/2$-)G category $D^{abs}(Y,W)$ has a $\Z/2$-graded smooth categorical compactification $C_W\to D^{abs}(X,W),$
with a semi-orthogonal decomposition $C_W=\langle D^{abs}(V_1,W_1),\dots,D^{abs}(V_m,W_m)\rangle,$ where each $V_i$ is a $\mk$-smooth variety and the morphisms $W_i:V_i\to\A_{\mk}^1$ are proper.\end{theo}

The general idea of the proof of Theorem \ref{intro:smooth_compactification} is motivated by the following conjecture of Bondal and Orlov.

\begin{conj}\label{conj:BO}\cite{BO} Let $Y$ be a variety with rational singularities, and $f:X\to Y$ a resolution of singularities (recall that rationality of singularities means $\bR f_*\cO_X\cong\cO_Y$). Then the functor
$\bR f_*:D^b_{coh}(X)\to D^b_{coh}(Y)$ is a localization.\end{conj}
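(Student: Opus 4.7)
The plan is to verify the two conditions for a Verdier localization separately: essential surjectivity of $\bR f_*$ (modulo direct summands), and fully faithfulness of the induced functor $D^b_{coh}(X)/\Ker(\bR f_*) \to D^b_{coh}(Y)$. Both steps should exploit the rationality hypothesis $\bR f_*\cO_X \cong \cO_Y$ together with the projection formula and Grothendieck duality for the proper morphism $f$.

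For essential surjectivity, the core observation is that for any locally free sheaf $\cE$ on $Y$ the projection formula gives $\bR f_*(f^*\cE) = \cE \otimes^L_{\cO_Y} \bR f_*\cO_X = \cE$. For an arbitrary $G \in D^b_{coh}(Y)$, assuming $Y$ is quasi-projective so that enough locally free sheaves exist, I would resolve $G$ by a bounded-above complex of locally free sheaves, truncate at a sufficiently negative homological degree, pull back to $X$, and argue that the omitted tail contributes only a direct summand that can be split off. Since $X$ is smooth, the truncated pullback automatically lives in $D^b_{coh}(X) = \Perf(X)$.

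For fully faithfulness of the induced functor from the Verdier quotient, given $F, F' \in D^b_{coh}(X)$ and a morphism $\phi : \bR f_* F \to \bR f_* F'$, the task is to construct a roof $F \xleftarrow{s} F'' \to F'$ in $D^b_{coh}(X)$ with $\bR f_*(s)$ invertible and whose image under $\bR f_*$ is $\phi \circ \bR f_*(s)$. The natural route uses the adjunction $Lf^* \dashv \bR f_*$: one takes $F'' := Lf^*\bR f_* F$, with $s$ the counit of adjunction, and the second leg obtained by applying $Lf^*$ to $\phi$ followed by the counit at $F'$. That $\bR f_*(s)$ is an isomorphism is then the content of the projection formula combined with rationality; compatibility with $\phi$ is formal from the adjunction triangle identities.

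The main obstacle is that $Lf^*$ does not send $D^b_{coh}(Y)$ into $D^b_{coh}(X)$ when $Y$ is singular: the object $Lf^*\bR f_* F$ generally lies only in $D^-_{coh}(X)$, so it does not yet serve as $F''$ inside $D^b_{coh}(X)$. Circumventing this requires replacing $Lf^*\bR f_* F$ by a bounded truncation in a manner compatible with $\bR f_*$, for instance by factoring through a Kuznetsov--Lunts categorical resolution $\wt{\cD}$ of $D^b_{coh}(Y)$, which does support honest adjoints between bounded categories and in favorable situations admits a comparison functor with $D^b_{coh}(X)$. An alternative is to swap $Lf^*$ for the Grothendieck dual $f^!$, which preserves bounded coherent complexes, at the cost of restating the localization criterion on the dual side. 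Reconciling any such replacement with the rationality hypothesis is the delicate point, and explains why the statement remains conjectural in full generality.
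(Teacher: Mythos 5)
The statement you are addressing is stated in the paper as a conjecture, not a theorem: the author explicitly writes that he is unable to prove Conjecture \ref{conj:BO} in general, and only establishes the special case Theorem \ref{th:BO_special_case}, which assumes in addition that there is a subscheme $S\subset Y$ all of whose infinitesimal neighborhoods are non-rational centers with respect to $f$, that $f$ is an isomorphism outside $S$, and --- crucially --- that the restriction $\bR p_*:D^b_{coh}(T)\to D^b_{coh}(S)$ to the exceptional locus is already known to be a localization. So there is no proof in the paper to compare against, and your proposal, as you yourself concede in the final paragraph, is not a proof either.

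Concretely, the gap is in the full-faithfulness step. Essential surjectivity up to direct summands is indeed standard (projection formula plus stupid truncation of a locally free resolution, using that $\bR f_*$ has finite cohomological amplitude). But your roof $F\xleftarrow{s}Lf^*\bR f_*F\to F'$ lives only in $D^-_{coh}(X)$, and truncating it destroys the adjunction triangle identities you rely on; worse, by the Neeman criterion (Proposition \ref{prop:criterion_of_loc}) what must actually be verified is that the kernel of $f_*$ on the big (co)derived categories is generated by its intersection with $D^b_{coh}(X)$, and no amount of formal adjunction yoga produces such bounded coherent generators. This is exactly the obstruction the paper cannot overcome, which is why the main construction sidesteps the conjecture: instead of working with $D^b_{coh}(X)$ alone, Theorem \ref{th:main_localization} glues $\mD^b(S)$ onto $\mD^b(X)$ via the bimodule induced by $j_*p^!$ and proves that the resulting category $\cD_{coh}(X,S)$ localizes onto $\mD^b(Y)$ with kernel generated by the image of $\mD^b(T)$, the proof proceeding by induction over infinitesimal neighborhoods of $S$ together with an Auslander-type construction for square-zero extensions. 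Your two suggested fixes (factoring through a Kuznetsov--Lunts categorical resolution, or replacing $Lf^*$ by $f^!$) point in the right direction --- the first is essentially what the paper does --- but neither is carried out, so the argument remains incomplete and the conjecture remains open.
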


If we are able to prove Conjecture \ref{conj:BO} and also to show that the kernel is generated by a single object, then choosing any smooth compactification $\overline{X}$ of $X$ we get a smooth categorical compactification $D^b_{coh}(\overline{X})$ of $D^b_{coh}(Y).$ Unfortunately, we are not able to prove Conjecture \ref{conj:BO} in general, although we are able to prove it in some class of cases, which we mention here.

\begin{theo}\label{th:intro_BO_special} Suppose that $Y$ has rational singularities, $Z\subset Y$ is a closed smooth subscheme, and $X=Bl_ZY$ is smooth, so that $f:X\to Y$ is a resolution of singularities. Let us denote by $T=f^{-1}(Z)$ the exceptional divisor, and by $p:T\to Z$ the induced morphism, and by $j:T\to X$ the embedding. Suppose that $\bR f_* I_T^n=I_Z^n$ for $n\geq 1.$ Then the functor $\bR f_*:D^b_{coh}(X)\to D^b_{coh}(Y)$ is a localization, and its kernel is generated by $j_*((p^*D^b_{coh}(Z))^{\perp}).$\end{theo}

A more general version of this result is Theorem \ref{th:BO_special_case}. A special case of the setting of Theorem \ref{th:BO_special_case} is when $Y\subset\A^m$ is a cone over some projective embedding of a smooth Fano variety, and $Z=\{0\}\subset Y$ is the origin.


For an arbitrary scheme $Y$ (separated, of finite type) the idea is to use the so-called categorical resolution, constructed by Kuznetsov and Lunts \cite{KL}. It plays the same role as the derived category of the resolution of rational singularities, and it exists for any separated scheme of finite type. Surprisingly, in this framework we are able to prove the analogue of Conjecture \ref{conj:BO}, which allows us to prove Theorem \ref{intro:smooth_compactification}. We note that even if $Y$ has rational singularities, we still use the categorical resolution to obtain the smooth categorical compactification.

We now describe the structure of the proof, concentrating on the part 1) (the case of matrix factorizations is analogous, although it is more technically involved).

We assume that $Y$ is proper (if not then we can replace $Y$ by its compactification). Following the same steps as \cite{KL}, we construct a smooth and proper DG category $C,$ with a DG quasi-functor $C\to D^b_{coh}(Y),$ so that $C=\langle D^b_{coh}(X_1),\dots,D^b_{coh}(X_m)\rangle,$ where each $X_i$ is a smooth and proper variety over $\mk.$ It suffices to prove that $C\to D^b_{coh}(Y)$ is a localization, and its kernel is generated by a single object. Then we can apply the result of Orlov \cite[Theorem 4.15]{Or} which states that the class of admissible subcategories of derived categories of smooth and proper varieties is closed under gluing via perfect bimodules. This allows us to replace $C$ by a derived category of some smooth and proper variety.  

The construction of the DG category $C$ is inductive: we choose a sequence of blow-ups along smooth centers
$$Y_n\to Y_{n-1}\to\dots\to Y_1\to Y,$$ such that $(Y_n)_{red}$ is smooth, and proceed by induction on $n.$

For $n=0,$ the scheme $Y_{red}$ is smooth, and the categorical compactification is given by a ringed space
$(Y,\cA_Y)$ (obtained by "Auslander-type construction"), together with a morphism $\rho_Y:(Y,\cA_Y)\to Y,$ see subsection \ref{ssec:A-construction_coh}. Namely, in this case
the direct image functor $\rho_{Y*}:D^b_{coh}(\cA_Y)\to D^b_{coh}(Y)$ is a smooth categorical compactification, and we have a semi-orthogonal decomposition
$$D^b_{coh}(\cA_{Y})=\langle D^b_{coh}(Y_{red}),\dots,D^b_{coh}(Y_{red}))\rangle,$$
where the number of copies is the smallest number $m$ such that $I^m=0,$ where $I\subset \cO_Y$ is the nilpotent radical.

Another important ingredient is the "categorical blow-up", see Subsection \ref{ssec:cat_blowup_sheaves}. Namely, if $X$ is a blow-up of $Y$ along $S\subset Y,$ then under some assumptions on $S$ (which are always achieved by replacing it by a sufficiently large infinitesimal neighborhood), we can glue (semi-orthogonally) the DG categories $D^b_{coh}(X)$ and $D^b_{coh}(S)$ to obtain a DG category $\cD_{coh}(X,S)$ with a localization DG functor \begin{equation}\label{eq:pi_*_cat_blow_up}\pi_*:\cD_{coh}(X,S)\to D^b_{coh}(Y).\end{equation} Proving that $\pi_*$ is a localization and controlling the kernel is the most difficult part of the proof, see Theorem \ref{th:main_localization}.

After this is done, we present the construction of a smooth categorical compactification (Subsection \ref{ssec:constr_of_sm_comp}). 
It is a combination of Auslander-type constructions and modified categorical blow-ups. The latter were also studied in \cite{KK}, with a more conceptual description. Compared to \cite{KL}, we need some restriction on the choice of integer parameters, see Remark \ref{rem:choice_of_parameters}.

It is a challenging question if one can drop the assumption of characteristic zero in Theorems \ref{intro:hfp_coh}, \ref{intro:hfp_MF}, \ref{intro:smooth_compactification} (e.g. using some categorical modifications of de Jong's alterations).


Another interesting subject is to consider coherent D-modules instead of coherent sheaves. The following conjecture was suggested to me by V.~Drinfeld.

\begin{conj}If $X$ is a separated scheme of finite type over a field $\mk$ of characteristic zero, then the bounded derived category $D^b(\Coh\mhyphen\cD_X)$ of coherent $\cD$-modules on $X$ (more precisely, its DG enhancement) has a smooth categorical compactification.\end{conj}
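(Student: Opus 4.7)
The plan is to imitate the strategy of Theorem \ref{intro:smooth_compactification} for coherent sheaves, adapting each step to the setting of coherent $\cD$-modules. Throughout, one works with the DG enhancement of $D^b_{coh}(\cD_Y)$, defined for singular $Y$ via a closed embedding $Y\hookrightarrow V$ into a smooth ambient variety and Kashiwara's equivalence. Since the statement covers arbitrary separated schemes of finite type, one may freely replace $Y$ by the closure of its image in a smooth compactification of $V$ and assume at the outset that $Y$ is proper.

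\textbf{Smooth case.} Suppose first that $Y$ is smooth. By Hironaka, choose a smooth projective compactification $j:Y\hookrightarrow\bar{Y}$ with complement $D=\bar Y\setminus Y$ a simple normal crossings divisor. The DG category $D^b_{coh}(\cD_{\bar Y})$ is smooth (because $\cD_{\bar Y}$ has finite homological dimension on a smooth variety) and proper (because $\bar Y$ is proper), so $j^*:D^b_{coh}(\cD_{\bar Y})\to D^b_{coh}(\cD_Y)$ is a natural candidate for a smooth categorical compactification. It is a Verdier localization whose kernel consists of coherent $\cD_{\bar Y}$-modules set-theoretically supported on $D$; applying Kashiwara's theorem inductively along the SNC strata should yield a single classical generator of this kernel, built from the $\cD$-module direct images of the structure $\cD$-modules of the strata.

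\textbf{Singular case.} For a general $Y$, mirror the strategy of \cite{KL} and the present paper: fix a sequence of blowups $Y_n\to\dots\to Y_1\to Y$ with smooth centers such that $(Y_n)_{red}$ is smooth, and proceed by induction on $n$. The base case requires a $\cD$-module analog of the Auslander-type sheaf of algebras $\cA_Y$ of Subsection \ref{ssec:A-construction_coh}, together with a semi-orthogonal decomposition
$$D^b_{coh}(\cA_Y^{\cD})=\langle D^b_{coh}(\cD_{Y_{red}}),\dots,D^b_{coh}(\cD_{Y_{red}})\rangle.$$
The inductive step requires a $\cD$-module categorical blowup: given a blowup $\pi:X\to Y$ along a suitably thickened smooth center $S$, one must glue $D^b_{coh}(\cD_X)$ with $D^b_{coh}(\cD_S)$ into a DG category, and show that its natural pushforward to $D^b_{coh}(\cD_Y)$ is a localization with kernel generated by a single object, yielding the $\cD$-module analog of Theorem \ref{th:main_localization}.

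\textbf{Main obstacle.} The central difficulty is the blowup step, because $\cD$-module direct image is not the underlying $\cO$-module direct image, and the identity $\bR f_*\cO_X\cong\cO_Y$ for a rational resolution has no straightforward $\cD$-module counterpart. A promising workaround is to exploit the order filtration $F_\bullet\cD$ and the Rees construction to interpolate between coherent $\cD$-modules and coherent sheaves on (a deformation of) $T^*Y$, and to deduce the $\cD$-module statement from the coherent version already established in this paper. Checking that Auslander-type constructions and categorical blowups are compatible with this Rees deformation and with passage to $\gr$—and that kernels of localizations transfer correctly through the deformation—will be where most of the technical work lies.
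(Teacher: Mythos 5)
The paper offers no proof of this statement: it is stated explicitly as a conjecture attributed to Drinfeld, and the text immediately says ``It remains open.'' So there is nothing in the paper to compare your argument against, and what you have written is not a proof but a research plan whose hard steps are deferred rather than carried out. You acknowledge this yourself in the last paragraph (``will be where most of the technical work lies''), which is exactly the honest reading of the situation: the conjecture remains a conjecture.

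Concretely, the gaps are the ones you flag plus a few you underplay. The engine of the coherent-sheaf argument (Theorem \ref{th:main_localization} and Proposition \ref{prop:hom_epi_pi*}) runs on the exact triangle $\cO_Y\to \bR f_*\cO_X\oplus \bR i_*\cO_S\to \bR f_*\cO_T$ coming from the non-rational-center condition, fed through Grothendieck duality for the $\cO$-linear pushforward. For $\cD$-modules the relevant pushforward is $f_+$, not $\bR f_*$, there is no $\cO$-linear projection formula of that shape, and no analogue of ``non-rational center'' has been identified, so the homological-epimorphism step and the infinitesimal-induction step (Lemma \ref{lem:infinitesimal_extension}) both lack a $\cD$-module counterpart. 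Likewise the Auslander-type algebra $\cA_{S,\tau,n}$ is built from powers of a nilpotent $\cO$-ideal, which is an $\cO$-linear construction; promoting it to a sheaf of $\cD$-algebras with the required semi-orthogonal decomposition into copies of $D^b_{coh}(\cD_{Y_{red}})$ is not routine and is not done here. There is also an ambient-space subtlety you do not address: for singular $Y$ the category $D^b_{coh}(\cD_Y)$ is defined via a closed embedding $Y\hookrightarrow V$ into a smooth $V$, but a blow-up of $Y$ along a center $Z$ smooth in $Y$ need not sit inside a smooth blow-up of $V$ (since $Z$ need not be smooth in $V$), so it is not even clear how to define the $\cD$-module category of the blown-up scheme compatibly with the one on $Y$.

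Finally, the Rees-deformation ``workaround'' does not obviously deduce the $\cD$-module statement from Theorem \ref{th:smooth_compactification}: the special fiber of $\mathrm{Rees}(\cD_Y)$ is (graded) $\cO_{T^*Y}$, and $T^*Y$ is never proper when $\dim Y>0$, so smoothness and properness of the candidate compactification do not specialize in the naive way. A correct argument along these lines would have to work equivariantly for the $\mathbb{G}_m$-action and explain how the one-object-kernel condition is preserved across the degeneration, neither of which is sketched. Your outline is a sensible first pass at the problem, but none of the load-bearing lemmas are proved, and the conjecture should still be regarded as open.
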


In the forthcoming paper \cite{E} we will prove that at least the DG category $D^b(\Coh\mhyphen\cD_X)$ is homotopically finite, by completely different methods.

The paper is organized as follows.

In Section \ref{sec:Ho_DG_cat} we discuss the notions of homotopy finiteness of DG categories and smooth categorical compactifications, and also
formulate some basic results related to this notions.

In Section \ref{sec:localizations} we recall the Neeman's criterion for a functor to be a localization, and introduce homological epimorphisms of DG categories (in the terminology of \cite{GL}, \cite{Pau}) which generalize localizations.

In Section \ref{sec:gluing_conceptual} we recall the notion of gluing of DG categories via a bimodule. We also recall the relation of gluing with the so-called upper-triangular DG categories \cite{Tab3}.  

In Section \ref{sec:gluing} we prove various technical results, which we need in the sequel.

In Section \ref{sec:exotic_derived} we recall the notion of coderived category and absolute derived category. We formulate basic results for locally noetherian abelian categories, in particular, about compact generation of coderived categories.

Section \ref{sec:coh_and_MF} is devoted to specific convenient enhancements for derived categories of coherent sheaves and absolute derived categories of coherent matrix factorizations. For these enhancements, we have natural DG functors (not just quasi-functors) of direct image for a proper morphism. 

In Subsection \ref{ssec:coh_MF} we also prove the analogues of theorems of Rouquier and Lunts for matrix factorizations on separated schemes of finite type over a field. Namely, we prove the existence of strong generators in the absolute derived categories of coherent matrix factorizations (Theorem \ref{th:strong_generator_MF}), and smoothness of their enhancements under some necessary assumptions (Theorem \ref{th:smoothness_MF}).

In Subsection \ref{ssec:nice_ringed_spaces} we introduce the category of nice ringed spaces. Its objects are pairs $(X,\cA_X),$ where $X$ is a separated noetherian scheme and $\cA_X$ is a coherent sheaf of $\cO_X$-algebras,
satisfying some additional property. We discuss the (co)derived categories of (quasi-)coherent sheaves on nice ringed spaces, and functors between them. In particular, we show that the category $\QCoh(\cA_X)$ of quasi-coherent $\cA_X$-modules and the category $\Mod\mhyphen\cA_X$ of all $\cA_X$-modules are locally noetherian, and an object of  $\QCoh(\cA_X)$ is injective iff it is injective in $\Mod\mhyphen\cA_X.$

Section \ref{sec:sm_cat_comp} is devoted to the proof of Theorem \ref{intro:smooth_compactification} (see Theorem \ref{th:smooth_compactification}).

In Subsection \ref{ssec:A-construction_coh} we study Auslander-type construction from \cite{KL}. This is a nice ringed space $(S,\cA_S)$ constructed from a triple $(S,\tau,n)$ where $S$ is a scheme,
$\tau$ a coherent ideal sheaf and $n$ is a positive integer such that $\tau^n=0.$ We study various properties of the category $D^b_{coh}(\cA_S)$ which we use in the sequel: the localization functor to $D^b_{coh}(S),$ the semi-orthogonal decomposition and its compatibility with direct images.

Subsection \ref{ssec:A-construction_MF} is devoted to studying categories of matrix factorizations on the same nice ringed spaces, and proving similar results for them.

Subsection \ref{ssec:cat_blowup_sheaves} is devoted to proving that the functor \eqref{eq:pi_*_cat_blow_up} mentioned above is a localization. The main result of this section is Theorem \ref{th:main_localization}, and the localization is proved in a more general context, without even mentioning blow-ups. The proof is quite involved. It is worth mentioning that the proof uses the Auslander-type construction, although Theorem \ref{th:main_localization} is formulated without mentioning any nice ringed spaces which are not schemes.

In Subsection \ref{ssec:cat_blowup_MF} we prove a similar result for matrix factorizations (Theorem \ref{th:main_localizationMF}). The proof follows the same lines.

Subsection \ref{ssec:constr_of_sm_comp} is devoted to the proof of the main result (Theorem \ref{th:smooth_compactification}) on the existence of a smooth compactification for $D^b_{coh}(Y)$ and $D^{abs}(Y,W).$

Appendix \ref{app:MF} is devoted to studying totalizations of complexes of matrix factorizations from the functorial point of view. Namely, we show
that on suitable kinds of derived categories the totalization is a localization. This is quite useful and natural fact, which essentially allows to reduce various statements on absolute derived (and coderived) categories of matrix factorizations in abelian categories to similar statements about the usual (co)derived categories.

In Appendix \ref{app:duality} we prove Grothendieck duality for quasi-coherent sheaves and quasi-coherent matrix factorizations on
nice ringed spaces, using the technique from Appendix \ref{app:MF}.


{\noindent{\bf Acknowledgements.}} I am grateful to Alexander Beilinson, Alexei Bondal, Vladimir Drinfeld, Dmitry Kaledin, Maxim Kontsevich, Alexander Kuznetsov, Valery Lunts, Dmitri Orlov and Leonid Positselski for useful discussions. I am also grateful to an anonymous referee for a number of suggestions, corrections and useful remarks.

\section{Homotopy theory of DG algebras and DG categories}
\label{sec:Ho_DG_cat}

For the introduction on DG categories, we refer the reader to \cite{Ke}. Our basic reference for model categories is \cite{Ho}. The references for model structures on DG algebras and DG categories are \cite{Tab1, Tab2}.
The notion of homotopy finiteness is taken from \cite{TV}. The references for DG quotients are \cite{D, Ke2}.

We fix some base field $\mk.$ We will consider either $\Z$-graded or $\Z/2$-graded DG categories. The latter can be treated as DG categories over $\mk[u^{\pm 1}],$
where $u$ has degree $2.$ These two cases are parallel for our discussion. If we do not specify the grading, we mean that everything holds in both frameworks. We write $-\otimes-$ for $-\otimes_{\mk}-.$ Also, for a homogeneous element $v$ of a graded vector space $V,$ we denote by $|v|$ its grading.

All DG modules are assumed to be right unless otherwise stated. Given a small DG category $\cA,$ we denote by $\Mod\mhyphen\cA$ the DG category of right DG modules (it is denoted by $\Dif \cA$ in \cite[Section 1.2]{Ke}). We also denote by $\cA\mhyphen \Mod=\Mod\mhyphen\cA^{op}$ the DG category of left $\cA$-modules. We have a fully faithful Yoneda embedding functor $\cA\to \Mod\mhyphen\cA.$ For any DG category $T$ (not necessarily small) the $\mk$-linear category $H^0(T)$ has the same objects as $T,$ and the morphisms are given by $H^0(T)(X,Y)=H^0(T(X,Y)).$ It is shown in \cite[Lemma 2.2]{Ke} that the category $H^0(Mod\mhyphen\cA)$ is naturally triangulated. The derived category $D(\cA)$ is defined to be the Verdier quotient of $H^0(Mod\mhyphen\cA)$ by the full triangulated subcategory of acyclic DG modules.

It is also convenient to define the category $Z^0(T)$ for any DG category $T,$ similarly to $T.$ Here for a complex $\cK^{\bullet}$ of vector spaces we denote by $Z^0(\cK^{\bullet})$ the vector space of closed elements of degree zero.

By the results of \cite[Section 3]{Ke}, the full subcategory $H^0(\Acycl(\cA))$ 
of acyclic DG modules in $H^0(\Mod\mhyphen\cA)$ is both left and right admissible. Recall that an $\cA$-module $M$ is called {\it h-projective} (resp. {\it h-injective}) if it is in the left (resp. right) orthogonal to $H^0(\Acycl\mhyphen\cA).$ We denote by $\hproj(\cA)\subset \Mod\mhyphen\cA$ the full DG subcategory of acyclic $\cA$-modules. In particular, we have an equivalence $D(\cA)\simeq H^0(\hproj(\cA)).$ This allows to define the left derived functor $\bL F:D(\cA)\to D(\cB)$ of any exact functor $F:H^0(\Mod\mhyphen\cA)\to H^0(\Mod\mhyphen\cB)$ to be the composition $$D(\cA)\xto{\sim}H^0(\hproj(\cA))\xto{F} H^0(\Mod\mhyphen\cB)\to D(\cB).$$

The tensor product bifunctor
$$-\otimes_{\cA}-:\Mod\mhyphen\cA\otimes\cA\mhyphen\Mod$$ is given by
$$M\otimes_{\cA}N=\coker(\bigoplus\limits_{X,Y\in\cA}M(Y)\otimes \cA(X,Y)\otimes N(Y)\xto{\nu} \bigoplus\limits_{X\in\cA}M(X)\otimes N(X)),$$
where $\nu(m\otimes f\otimes n)=mf\otimes n-m\otimes fn.$

Given small DG categories $\cA,$ $\cB,$ we denote by $\cA\mhyphen\Mod\mhyphen\cB$ the DG category $\Mod\mhyphen(\cA^{op}\otimes\cB)$ of $\cA\mhyphen\cB$-bimodules. Then an $\cA$-$\cB$-bimodule $N$ defines a DG functor
$$-\tens{\cA}N:\Mod\mhyphen\cA\to\Mod\mhyphen\cB,$$ given by
$$(M\otimes_{\cA}N)(X)=M\otimes_{\cA}N(-,X).$$

This DG functor induces an exact functor $-\otimes_{\cA}N:H^0(\Mod\mhyphen\cA)\to H^0(\Mod\mhyphen\cA).$ We denote by $-\stackrel{\bL}{\otimes}_{\cA}N:D(\cA)\to D(\cB)$ the left derived functor. 

Similarly, given an $\cA\mhyphen\cB$-bimodule $M$ and a $\cB\mhyphen\cC$-bimodule $N,$ their tensor product $M\otimes_{\cB}N\in\cA\mhyphen\Mod\mhyphen\cC$ is given by
$$(M\otimes_{\cB}N)(X,Y)=M(X,-)\otimes_{\cB}N(-,Y).$$ Deriving the resulting bifunctor on either side gives the same biexact bifunctor
$$D(\cA^{op}\otimes\cB)\times D(\cB^{op}\otimes\cC)\to D(\cA^{op}\otimes\cC).$$


We denote by $\cS\cF_{fg}(\cA)\subset\Mod\mhyphen\cA$ the full DG subcategory of {\it semi-free finitely generated} modules. That is, a module $M$ is in $\cS\cF_{fg}(\cA)$ if it has a finite filtration by DG submodules, such that all the subquotients are isomorphic to the shifts of representable DG modules. In particular, all representable $\cA$-modules are in $\cS\cF_{f.g}(\cA).$ In fact, $\cS\cF_{fg}(\cA)\subset \hproj(\cA),$ and the category $H^0(\cS\cF_{fg}(\cA))$ is identified with the full triangulated subcategory of $D(\cA)$ generated by representable modules via shifts, cones and finite direct sums. We recall that a DG category $\cA$ is called weakly (strongly) pre-triangulated if the Yoneda functor $\cA\to\cF_{fg}(\cA)$ is a quasi-equivalence (resp. a DG equivalence). In particular, for a weakly pre-triangulated category, the category $H^0(\cA)$ is triangulated. By definition \cite{BK}, an enhancement of a triangulated category $\cT$ is a weakly pre-triangulated DG category $\cA,$ together with an exact equivalence $H^0(\cA)\simeq T.$

We recall that the triangulated subcategory $D_{\perf}(\cA)\subset D(\cA)$ is defined to be the Karoubi closure of $H^0(\cS\cF_{fg}(\cA))$ inside $D(\cA).$ In fact, the triangulated category $D(\cA)$ is compactly generated (see \cite[Section 4.2]{Ke}) and the subcategory $D(\cA)^c$ of compact objects coincides with $D_{\perf}(\cA)$ (see \cite{Ra}, \cite[Lemma 2.2]{Ne1} and \cite[Theorem 5.3]{Ke}).

We denote by $\dgalg_{\mk}$ the category of DG algebras over $\mk.$
By \cite{J}, it has a model structure, with weak equivalences being quasi-isomorphisms and fibrations being
surjections. This model category is finitely generated in the terminology of \cite{Ho}. Its finite cell objects are the following.

\begin{defi}{\cite{TV}} A finite cell DG algebra $B$ is a DG algebra which is isomorphic as a graded algebra to
the free algebra of finite type:
$$B^{gr}\cong \mk\langle x_1,\dots,x_n\rangle,$$
and moreover we have $$dx_i\in \mk\langle x_1,\dots,x_{i-1}\rangle,\quad 1\leq i\leq n.$$\end{defi}

We recall that for a category $\cC,$ an object $X\in\cC$ is called a {\it retract} of an object $Y\in\cC,$ if there exist morphisms $f:X\to Y$ and $g:Y\to X,$ such that $gf=\id_X.$

The following definition is due to To\"en and Vaqui\'e \cite{TV}. It makes sense for all finitely generated model categories.

\begin{defi}{\cite{TV}} A DG algebra $A$ is homotopically finitely presented (hfp), if in the homotopy category $\Ho(dgalg_{\mk})$ the object $A$ is
a retract of some finite cell DG algebra $B.$
\end{defi}

We recall the notions of smoothness and properness.

\begin{defi}{\cite{KS}} 1) A DG algebra $A$ is smooth over $\mk$ if the diagonal $A\mhyphen A$-bimodule is perfect:
$$A\in D_{\perf}(A\otimes A^{op}).$$

2) A DG algebra $A$ is proper over $\mk$ if $A\in D_{\perf}(\mk),$ or, in other words, the total cohomology of $A$ is finite-dimensional.\end{defi}

We have the following implications, due to To\"en and Vaqui\'e:

\begin{theo}\label{th:TV_implications}{\cite{TV}} 1) If a DG algebra is hfp over $\mk,$ then it is smooth.

2) If a DG algebra is smooth and proper over $\mk,$ then it is hfp.

3) If DG algebras $A$ and $A^\prime$ are Morita equivalent and $A$ is hfp, then so is $A^\prime.$\end{theo}

The part 3) of the above theorem implies that we can talk about the homotopy finiteness of small DG categories,
which are Morita equivalent to a DG algebra, that is, generated by a single object.

We may as well take the category of small DG categories $\dgcat_{\mk},$ and define weak equivalences as Morita equivalence.
Tabuada \cite{Tab1} has constructed the corresponding model structure, which is again finitely generated. We denote by $\Ho_M(dgcat_{\mk})$
the corresponding homotopy category.

Also, by \cite{Tab2} there is a model structure on $\dgcat_{\mk}$ with weak equivalences being quasi-equivalences. We denote by $\Ho(\dgcat_{\mk})$ the corresponding homotopy category.

\begin{defi}A DG category $\cB$ is called finite cell if 

i) $\cB$ has a finite number of objects;

ii) the graded category $\cB^{gr}$ is freely generated by a finite number of morphisms $f_1,\dots,f_n;$

iii) we have
$$df_i\in \mk\langle f_1,\dots f_{i-1}\rangle,\quad 1\leq i\leq n.$$
\end{defi}

The homotopically finite DG categories are defined in the same way.

\begin{defi}A small DG category $\cA$ is homotopically finitely presented (hfp) if in the homotopy category
$\Ho_M(dgcat_{\mk})$ $\cA$ is a retract of a finite cell DG category.\end{defi}

By \cite[Corollary 2.12]{TV}, a DG category is hfp if and only if it is Morita equivalent to a hfp DG algebra. 



\begin{remark}The notions of smoothness and properness make sense for all small DG categories, and the statements 1), 2) of Theorem \ref{th:TV_implications} also hold for small DG categories.\end{remark}

The following is known but we prove it here since we do not know a reference.

\begin{prop}\label{prop:quotient_by_1hfp}Let $\cC$ be a small DG category, which is hfp, and $E\in Ob(\cC)$ an object. Then the DG quotient $\cC/E$ is also hfp.\end{prop}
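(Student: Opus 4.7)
The plan is to realize the Drinfeld DG quotient $\cC/E$ as a homotopy pushout in the Morita model structure on $\dgcat_k$ whose three corners are all hfp, and then to invoke closure of hfp objects under finite homotopy colimits.

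First I would recall the explicit model for $\cC/E$: it is obtained from $\cC$ by freely adjoining a single endomorphism $\epsilon_E\colon E\to E$ of degree $-1$ with $d\epsilon_E=1_E$. Setting $k_E$ to be the DG category with one object $*$ and morphism complex $k$, and $\widetilde{k}_E$ to be the DG category obtained from $k_E$ by freely adjoining a generator $\epsilon$ of degree $-1$ with $d\epsilon=1_*$, this identifies
\[
\cC/E \;\cong\; \cC \sqcup_{k_E} \widetilde{k}_E,
\]
where $k_E\to\cC$ sends $*\mapsto E$. The inclusion $k_E\hookrightarrow\widetilde{k}_E$ amounts to attaching a single generating morphism whose differential lies in the previously constructed subalgebra, so it is (up to isomorphism) one of Tabuada's generating cofibrations in $\dgcat_k$, and a fortiori a cofibration in the Morita model structure; hence this ordinary pushout computes the homotopy pushout.

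Next I would verify that each corner is hfp. The DG category $k_E$ is finite cellular with no generating morphisms beyond the identity (the case $n=0$). The DG category $\widetilde{k}_E$ is finite cellular with the single generator $\epsilon$, whose differential $d\epsilon=1_*$ lies in $k\langle\,\rangle$. And $\cC$ is hfp by assumption.

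Finally I would invoke the general principle that in any finitely generated model category the class of homotopically compact objects is closed under finite homotopy colimits. For any filtered diagram $\{\cD_\alpha\}$ in $\dgcat_k$ one has
\[
\operatorname{Map}(\cC/E,\,\colim_\alpha \cD_\alpha) \;\simeq\; \operatorname{Map}(\cC,\,\colim_\alpha \cD_\alpha) \,\times^h_{\operatorname{Map}(k_E,\,\colim_\alpha \cD_\alpha)}\, \operatorname{Map}(\widetilde{k}_E,\,\colim_\alpha \cD_\alpha),
\]
with all mapping spaces taken in $\Ho_M(\dgcat_k)$; since filtered colimits of simplicial sets commute with finite homotopy limits and each of $\cC$, $k_E$, $\widetilde{k}_E$ is hfp, the colimit passes outside, yielding that $\cC/E$ is hfp. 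The main subtle point is the first step, namely confirming that Drinfeld's explicit construction genuinely models the homotopy pushout along $k_E\hookrightarrow\widetilde{k}_E$ in the Morita setting; once this identification is secured, the remainder is a purely formal compactness argument.
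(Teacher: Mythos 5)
Your proof is correct and takes a genuinely different route from the paper's. The paper first reduces to the case where $\cC$ has exactly two objects $X_1,X_2$ with $E=X_1$, then works in the auxiliary model category $\dgcat_k(1,2)$ of DG categories with that fixed object set; it observes that the Drinfeld quotient functor $-/X_1$ preserves weak equivalences and finite cellular objects in $\dgcat_k(1,2)$, hence preserves homotopically finite objects. You instead exhibit $\cC/E$ as a homotopy pushout $\cC\sqcup_{k_E}\widetilde{k}_E$ with all three corners hfp and one leg a cofibration, and conclude by closure of homotopically compact objects under finite homotopy colimits in the finitely generated Morita model structure. Your route avoids both the reduction to two objects and the auxiliary fixed-object-set model category, replacing them with a single formal compactness argument, which is arguably cleaner; the paper's route is more hands-on and keeps everything inside the explicit description of finite cell objects.

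Two small points worth tightening. First, $k_E\hookrightarrow\widetilde{k}_E$ is not literally one of Tabuada's generating cofibrations (those have two distinct objects), but rather a cobase change of one, obtained by identifying the two objects; this still yields a cofibration, so the argument survives, but the phrasing should be adjusted. Second, for the strict pushout $\cC\sqcup_{k_E}\widetilde{k}_E$ to compute the homotopy pushout you need all corners cofibrant in addition to the cofibration condition on $k_E\to\widetilde{k}_E$; this forces you to replace $\cC$ by a cofibrant replacement $\cC'$ and note that $\cC'/E'\simeq\cC/E$ and that hfp is a weak-equivalence invariant, after which the argument goes through. You flag the first-step identification as the subtle point, which is exactly right, but the cofibrant replacement step should be spelled out to make the homotopy-pushout claim airtight.
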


\begin{proof}We may and will assume that $\cC$ is a DG category with two objects, which we denote by $X_1,$ $X_2,$ and $E=X_1.$ Indeed, as we already mentioned, $\cC$ is Morita equivalent to a (hfp) DG algebra, hence (after possibly replacing $\cC$ by a larger subcategory of $\cS\cF_{fg}(\cC)$) we can find an object $X\in\cC$ which classically generates $D_{\perf}(\cC).$ Then the full subcategory $\cC'=\{E,X\}\subset \cC$ is Morita equivalent to $\cC,$ and $\cC'/E$ is Morita equivalent to $\cC/E.$ Hence, we can replace $\cC$ by $\cC'$ (we assume $E\ne X,$ since otherwise there is nothing to prove).   

If $\cC$ is itself finite cell, then by definition the Drinfeld DG quotient $\cC/E$ is also finite cell.

In general, consider the category $\dgcat_{\mk}(1,2)$ of DG categories with the fixed set of objects $(X_1,X_2).$ Morphisms in $\dgcat_{\mk}(1,2)$
are DG functors which induce the identity map on objects. Then $\dgcat_{\mk}(1,2)$ is again a finitely generated model category. Indeed, it is equivalent to the undercategory $(\mk\oplus\mk\downarrow\dgalg_{\mk}),$ that is, the category of DG algebras $A$ equipped with a morphism $\mk\oplus\mk\to A.$ It follows from \cite[Theorem 2.7]{Hir} that an undercategory of any finitely generated model category is finitely generated. 

It is easy to show that the
DG category $\cC$ is also homotopically finite in $\dgcat_{\mk}(1,2).$ Further, the Drinfeld DG quotient functor
$$-/X_1:\dgcat_{\mk}(1,2)\to \dgcat_{\mk}(1,2)$$ preserves weak equivalences, hence induces a functor
$$-/X_1:\Ho(\dgcat_{\mk}(1,2))\to \Ho(\dgcat_{\mk}(1,2)).$$
We already know that it preserves finite cell objects. Hence, it also preserves homotopically finite objects, since they are retracts of finite cell objects. This proves the proposition.
\end{proof}

In the introduction we have introduced the notion of a smooth categorical compactification (Definition \ref{def:intro_smooth_cat_comp}). We have the following corollary.


\begin{cor}\label{cor:sm_comp->hfp}Assume that a small DG category $\cA$ has a smooth categorical compactification (Definition \ref{def:intro_smooth_cat_comp}). Then $\cA$ is hfp.\end{cor}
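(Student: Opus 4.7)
The strategy is to chain together the three results already available: Theorem \ref{th:TV_implications} (the To\"en--Vaqui\'e implications), Proposition \ref{prop:quotient_by_1hfp} (hfp passes to quotients by one object), and the universal property of the Drinfeld DG quotient on perfect complexes. The plan is first to produce an hfp DG category very close to $\cA$ from the compactification, and then to identify it with $\cA$ up to Morita equivalence.

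Concretely, let $F\colon\cC\to\cA$ be a smooth categorical compactification as in Definition \ref{def:smooth_cat_comp}, and let $E\in\cC$ be an object whose image generates the kernel of $F^*\colon\Perf(\cC)\to\Perf(\cA)$ as a thick subcategory. Since $\cC$ is smooth and proper, Theorem \ref{th:TV_implications}(2) gives that $\cC$ is hfp. Then Proposition \ref{prop:quotient_by_1hfp} applied to $\cC$ and the object $E$ yields that the Drinfeld DG quotient $\cC/E$ is again hfp.

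Next I would identify $\cC/E$ with $\cA$ up to Morita equivalence. Because $F(E)$ is contractible in $\Perf(\cA)$, the universal property of the Drinfeld quotient factors $F$ through a quasi-functor $\bar F\colon\cC/E\to\cA$, giving on perfect complexes a factorization $\Perf(\cC)\to\Perf(\cC/E)\xrightarrow{\bar F^*}\Perf(\cA)$ of $F^*$. By Drinfeld's theorem, the first arrow is a Verdier localization up to direct summands with kernel exactly the thick subcategory generated by $E$; by hypothesis $F^*$ is such a localization with the same kernel. A standard diagram chase then forces $\bar F^*$ to be fully faithful and essentially surjective up to summands, i.e.\ a Morita equivalence of $\cC/E$ with $\cA$. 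Applying Theorem \ref{th:TV_implications}(3) concludes that $\cA$ is hfp.

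The only nontrivial step is the identification of $\Perf(\cC/E)$ with $\Perf(\cA)$ up to direct summands, which is where the definition of ``smooth categorical compactification'' is actually used. This is essentially the universal property of Drinfeld's DG quotient combined with the fact that two Verdier localizations out of the same triangulated category with the same kernel agree; all the remaining work is a direct invocation of previously stated results, so no serious obstacle is expected.
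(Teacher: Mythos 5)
Your proposal is correct and follows exactly the route the paper takes (the paper's proof is a one-line appeal to Proposition \ref{prop:quotient_by_1hfp} and Theorem \ref{th:TV_implications}, which you have simply spelled out). The only small point worth tidying: the generator $E$ of the kernel lives in $\Perf(\cC)$ rather than $\cC$ itself, so one should first replace $\cC$ by a small Morita-equivalent model containing $E$ (harmless, since hfp is Morita-invariant by Theorem \ref{th:TV_implications}(3)) before invoking Proposition \ref{prop:quotient_by_1hfp}.
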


\begin{proof}Indeed, this follows directly from Proposition \ref{prop:quotient_by_1hfp} and Theorem \ref{th:TV_implications}.\end{proof}

\section{Homological epimorphisms and localizations}
\label{sec:localizations}

We recall the following result of Neeman on the localizations of compactly generated triangulated categories. If $\cT$ is a compactly generated triangulated category, then $\cT^c\subset\cT$ denotes the full triangulated subcategory of compact objects.

\begin{theo}\label{th:Neeman}\cite{N} Let $\cT$ and $\cS$ be compactly generated triangulated categories,
and $F:\cT\to\cS$ an exact functor commuting with small direct sums and preserving compact objects. The following are equivalent:

(i) The induced functor $F^c:\cT^c\to\cS^c$ is a localization up to direct summands (i.e. it is a localization onto its image, and the Karoubi completion of the image coincides with $\cS^c$);

(ii) The functor $f:\cT\to\cS$ is a localization, and its kernel is generated (as a localizing subcategory)
by its intersection with $\cT^c.$\end{theo}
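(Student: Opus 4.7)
The plan is to reduce the statement to Neeman's localization theorem in its standard formulation. Recall that version: if $\cK' \subseteq \cT$ is the localizing subcategory generated by a set $\cS_0$ of compact objects of $\cT$, then $\cK' \cap \cT^c = \langle \cS_0 \rangle_{\mathrm{thick}}$ (the thick closure inside $\cT^c$); moreover the Verdier quotient $\cT/\cK'$ is compactly generated, the projection preserves compactness, and $(\cT/\cK')^c$ is the idempotent completion of the naive quotient $\cT^c/\langle \cS_0 \rangle_{\mathrm{thick}}$.

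For (ii)$\Rightarrow$(i), I would apply the theorem with $\cS_0 = \ker F \cap \cT^c$. The Bousfield localization $F$ identifies $\cS$ with $\cT/\ker F$, and Neeman's theorem then describes $\cS^c$ as the idempotent completion of $\cT^c/(\ker F \cap \cT^c)$. The restriction of $F$ to compacts is therefore the Verdier quotient by $\ker F \cap \cT^c$ followed by the embedding into its idempotent completion, which is exactly condition (i).

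For (i)$\Rightarrow$(ii), I would set $\cK^c = \ker(F^c)$ and let $\cK' \subseteq \cT$ be the localizing subcategory it generates. Since $F$ preserves direct sums and cones and kills $\cK^c$, it kills all of $\cK'$ and descends to $\bar F : \cT/\cK' \to \cS$. By Neeman, $(\cT/\cK')^c$ is the idempotent completion of $\cT^c/\cK^c$; hypothesis (i) identifies this same completion with $\cS^c$ via $\bar F^c$. A coproduct-preserving exact functor between compactly generated triangulated categories that is an equivalence on compacts is an equivalence, so $\bar F$ is an equivalence. Consequently $F$ is a localization with kernel $\cK'$, and the identification $\cK' \cap \cT^c = \cK^c$ furnished by Neeman's theorem shows that $\ker F$ is generated (as a localizing subcategory) by its compact part.

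The sensitive step is the descent half of (i)$\Rightarrow$(ii): converting a "localization up to direct summands" statement on $\cT^c$ into an honest Bousfield-localization statement on the ambient $\cT$. This is precisely where the hypothesis that $F$ commutes with arbitrary coproducts is essential, since it is what forces $F$ to vanish on the full localizing closure of $\cK^c$ rather than only on the thick closure inside $\cT^c$; everything else is formal bookkeeping once Neeman's theorem is in place.
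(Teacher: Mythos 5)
The paper offers no proof of this theorem; it cites Neeman's paper \cite{N} directly. Your argument is a correct and essentially standard unpacking of the statement from Neeman's localization theorem (compact objects of a Verdier quotient by a localizing subcategory generated by a set of compacts), which is precisely the source being invoked, and both directions — including the descent step in (i)$\Rightarrow$(ii) via $\bar F:\cT/\cK'\to\cS$ together with the fact that a coproduct-preserving exact functor between compactly generated categories which is an equivalence on compacts is an equivalence — are sound.
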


We will restrict ourselves to triangulated categories with a DG enhancement. We need to make a remark on our notation.

{\noindent{\bf Notational convention.}} {\it For a DG functor $\Phi:\cA\to\cB$ between small DG categories we denote by $\Phi_*:D(\cB)\to D(\cA)$ the restriction of scalars functor. It's left adjoint, the extension of scalars, is denoted by $\Phi^*:D(\cA)\to D(\cB).$ We denote by $I_{\cA}\in\cA\mhyphen\Mod\mhyphen\cA$ the diagonal bimodule given by $I_{\cA}(X,Y)=\cA(Y,X).$ When it does not lead to confusion, we also denote this bimodule by $\cA,$ as well it's various restrictions of scalars. For example, the extension of scalars functor above can be written as
$$\Phi^*(-)=-\stackrel{\bL}{\tens{\cA}}\cB.$$} 

 The following notion of a homological epimorphism is a straightforward generalization of the corresponding notions from \cite{GL} (the case of associative rings) and \cite{Pau} (the case of DG algebras).

\begin{defi}A DG functor $\Phi:\cA\to\cB$ between small DG categories is a homological epimorphism if the extension of scalars functor
$$\Phi^*:D(\cA)\to D(\cB)$$ is a localization.\end{defi}

\begin{remark}\label{remark:about_localizations}An exact functor $F:\cT\to \cS$ between (not necessarily small) triangulated categories is a localization if the infuced functor $\bar{F}:\cT/\ker(F)\to\cS$ is an equivalence, which is in general hard to verify (for example, Conjecture \ref{conj:BO} is a statement of this kind). However, if we moreover assume that the functor $F$ has a left (resp. right) adjoint $G,$ then the condition on $F$ to be a localization is equivalent to the condition on $G$ to be fully faithful. 

Indeed, the composition $\bar{G}:\cS\xto{G}\cT\to\cT/\ker(F)$ is left (resp. right) adjoint to $\bar{F}.$ Thus, if $F$ is a localization, then $\bar{G}$ is an equivalence, and in fact $G$ identifies $\cS$ with $^{\perp}(\ker F)$ (resp. with $(\ker F)^{\perp}$). Conversely, if $G$ is fully faithful, that $G(\cS)\subset \cT$ is right (resp. left) admissible and $\ker(F)=G(\cS)^{\perp}$ (resp. $^{\perp}G(\cS)$), hence $F$ is a localization.\end{remark}

The property of being a homological epimorphism has a number of reformulations. 

\begin{prop}\label{prop:hom_epi}Let $\Phi:\cA\to\cB$ be a DG functor between small DG categories. The following are equivalent:

(i) $\Phi$ is a homological epimorphism;

(ii) the restriction of scalars functor $\Phi_*:D(\cB)\to D(\cA)$ is fully faithful;

(iii) For any $X,Y\in Ob(\cB)$ the natural (composition) morphism
$$\cB(\Phi(-),Y)\stackrel{\bL}{\tens{\cA}}\cB(X,\Phi(-))\to\cB(X,Y)$$
is an isomorphism in $D(k);$


(iv) The natural morphism
\begin{equation}\label{eq:counit_on_bimodules}\cB\stackrel{\bL}{\tens{\cA}}\cB=(\Phi\otimes\Phi^{op})^*I_{\cA}\to I_{\cB}\end{equation} is an isomorphism in $D(\cB\otimes\cB^{op}).$\end{prop}

\begin{proof}(i)$\Leftrightarrow$(ii). This follows from the adjunction between $\Phi^*$ and $\Phi_*$ and Remark \ref{remark:about_localizations}.

(ii)$\Leftrightarrow$(iv). Indeed, fully-faithfulness of $\Phi_*$ is equivalent to the condition on the adjunction counit $\Phi^*\Phi_*\to\id$ to be an isomorphism. But the composition $\Phi^*\Phi_*$ is given by the tensor multiplication by the $\cB\mhyphen\cB$-bimodule $\cB\stackrel{\bL}{\otimes}_{\cA}\cB.$ The counit morphism corresponds to the morphism \eqref{eq:counit_on_bimodules}. Thus, the counit is an isomorphism iff the morphism \eqref{eq:counit_on_bimodules} is an isomorphism in $D(\cB\otimes\cB^{op}).$

(iii) is essentially a reformulation of (iv).
\end{proof}

\begin{cor}If $\Phi:\cA\to\cB$ is a homological epimorphism and $\cA$ is smooth, then $\cB$ is also smooth.\end{cor}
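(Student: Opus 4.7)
The plan is to reduce everything to condition (v) of Proposition \ref{prop:hom_epi}, which gives $I_\cB \cong (\Phi\otimes\Phi^{op})^* I_\cA$ in $D(\cB\otimes\cB^{op})$. Smoothness of $\cA$ says $I_\cA$ is perfect in $D(\cA\otimes\cA^{op})$, so it suffices to show that the extension-of-scalars functor
\[
(\Phi\otimes\Phi^{op})^{*}:D(\cA\otimes\cA^{op})\longrightarrow D(\cB\otimes\cB^{op})
\]
preserves perfect objects.

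To verify this, I would argue in the standard way: $(\Phi\otimes\Phi^{op})^{*}$ is a left adjoint (the derived tensor product $-\stackrel{\bL}{\otimes}_{\cA\otimes\cA^{op}}(\cB\otimes\cB^{op})$), hence it is triangulated and commutes with arbitrary direct sums, so in particular with direct summands. Moreover it takes the representable $(\cA\otimes\cA^{op})$-module $h_{(X,Y)}=h_X\boxtimes h_Y^{op}$ to $h_{\Phi(X)}\boxtimes h_{\Phi(Y)}^{op}$, which is a representable $(\cB\otimes\cB^{op})$-module, hence perfect. Since $\Perf(\cA\otimes\cA^{op})$ is the smallest thick subcategory of $D(\cA\otimes\cA^{op})$ containing these representables, $(\Phi\otimes\Phi^{op})^{*}$ sends $\Perf(\cA\otimes\cA^{op})$ into $\Perf(\cB\otimes\cB^{op})$.

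Applying this to $I_\cA\in\Perf(\cA\otimes\cA^{op})$ and invoking the isomorphism from Proposition \ref{prop:hom_epi}(v), we conclude that $I_\cB$ is perfect in $D(\cB\otimes\cB^{op})$, i.e.\ $\cB$ is smooth. There is no real obstacle here; the only subtlety is to be sure that (v) of the previous proposition is available (which was just stated), so the whole proof is essentially the one-line composition "perfect bimodules pull forward to perfect bimodules, and $I_\cB$ is a pull-forward of $I_\cA$".
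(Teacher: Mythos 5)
Your proof is correct and is essentially the argument the paper has in mind: the corollary follows immediately from condition (v) of Proposition~\ref{prop:hom_epi} together with the standard fact that derived extension of scalars along $\Phi\otimes\Phi^{op}$ preserves perfect bimodules (being a triangulated, coproduct-preserving left adjoint sending representables to representables). The paper simply omits the proof, but this is precisely what it is.
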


\begin{proof} By definition of homological smoothness, the bimodule $I_{\cA}\in D(\cA\otimes\cA^{op})$ is perfect. The extension of scalars functor always preserves perfect complexes. Hence, condition (iv) from Proposition \ref{prop:hom_epi} implies that $I_{\cB}\in D(\cB\otimes\cB^{op})$ is also perfect, thus $\cB$ is smooth.
\end{proof}

For completeness, we list the following equivalent reformulations of the (simpler) property of being quasi-fully-faithful.

\begin{prop}Let $\Phi:\cA\to\cB$ be a DG functor between small DG categories. The following are equivalent:

(i) $\Phi$ is quasi-fully-faithful;

(ii) the functor $\Phi^*:D(\cA)\to D(\cB)$ is fully faithful;

(iii) the functor $\Phi_*:D(\cB)\to D(\cA)$ is a localization;

(iv) the natural morphism 
\begin{equation}\label{eq:unit_on_bimodules} I_{\cA}\to(\Phi\otimes\Phi^{op})_*I_{\cB}\end{equation} is an isomorphism in $D(\cA\otimes\cA^{op}).$\end{prop}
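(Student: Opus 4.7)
The plan is to observe that all four conditions are reformulations of the single statement that the unit of adjunction $\eta\colon\mathrm{id}_{D(\cA)}\to\Phi_*\Phi^*$ is an isomorphism, and then prove them equivalent through three short arguments: (i)$\Leftrightarrow$(iv) by unpacking the bimodules at each pair of objects; (i)$\Leftrightarrow$(ii) by computing $\eta$ on representables and doing a devissage; and (ii)$\Leftrightarrow$(iii) by standard adjunction theory. The overall structure parallels (and is easier than) the proof of Proposition \ref{prop:hom_epi}, which already establishes the analogous list for the counit.

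For (i)$\Leftrightarrow$(iv), the diagonal bimodule $I_{\cA}\in D(\cA\otimes\cA^{op})$ has value $\Hom_{\cA}(X,Y)$ at $(X,Y)$, while $(\Phi\otimes\Phi^{op})_* I_{\cB}$ is the restriction of the diagonal $\cB$-bimodule along $\Phi\otimes\Phi^{op}$ and so takes value $\Hom_{\cB}(\Phi X,\Phi Y)$. The natural morphism of (iv) evaluates at $(X,Y)$ to the map $\Hom_{\cA}(X,Y)\to\Hom_{\cB}(\Phi X,\Phi Y)$ induced by $\Phi$. Since a morphism in $D(\cA\otimes\cA^{op})$ is an isomorphism iff it is a quasi-isomorphism of $k$-complexes at every pair of objects, (iv) is literally the quasi-fully-faithful condition.

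For (i)$\Leftrightarrow$(ii), apply $\eta$ to a representable $h_X=\Hom_{\cA}(-,X)$. A coend/Yoneda computation identifies $\Phi^* h_X$ with $h_{\Phi X}$, so the unit morphism $h_X\to\Phi_*\Phi^* h_X$ evaluated at $Y\in\cA$ is exactly $\Hom_{\cA}(Y,X)\to\Hom_{\cB}(\Phi Y,\Phi X)$ induced by $\Phi$. Hence (i) is equivalent to $\eta$ being an isomorphism on every representable. To pass from representables to arbitrary objects of $D(\cA)$, note that $\Phi^*$ is a left adjoint (hence commutes with all coproducts and cones) and $\Phi_*$, being restriction of scalars, also commutes with arbitrary direct sums and cones. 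The full subcategory of $D(\cA)$ on which $\eta$ is an isomorphism is therefore localizing, and since the representables $h_X$ compactly generate $D(\cA)$, this subcategory exhausts $D(\cA)$, giving (ii).

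Finally, (ii)$\Leftrightarrow$(iii) is abstract nonsense for the adjoint pair $\Phi^*\dashv\Phi_*$: $\Phi^*$ is fully faithful iff $\eta$ is an isomorphism iff $\Phi^*$ realizes $D(\cA)$ as a reflective subcategory of $D(\cB)$ iff $\Phi_*$ factors through a Verdier-quotient equivalence $D(\cB)/\Ker(\Phi_*)\stackrel{\sim}{\to} D(\cA)$ (Bousfield localization). I expect no genuine obstacle: the only subtle step is the passage from representables to all modules in (i)$\Rightarrow$(ii), which is the standard localizing-subcategory argument, and the only book-keeping nuisance is tracking the conventions so that $\Phi^* h_X=h_{\Phi X}$ is oriented correctly.
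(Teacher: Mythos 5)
Your proof is correct. Note first that the paper states this proposition \emph{without} proof, remarking only that it is ``for completeness'' and dual to Proposition \ref{prop:hom_epi} (which is also left unproved), so there is no in-text argument to compare against; the comparison is really with the standard folklore.

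Your organization around the unit $\eta\colon\mathrm{id}\to\Phi_*\Phi^*$ is the natural one and each step checks out. In particular: (i)$\Leftrightarrow$(iv) is indeed a literal unwinding, since $(\Phi\otimes\Phi^{op})_*$ is plain restriction of scalars (no derivation needed) and a map of bimodules is an isomorphism in $D(\cA\otimes\cA^{op})$ iff it is an objectwise quasi-isomorphism; the identification $\Phi^*h_X\simeq h_{\Phi X}$ is correct in the paper's conventions ($N_\Phi(Z,Y)=\Hom_\cB(Z,\Phi Y)$, so $h_X\otimes_\cA N_\Phi\cong N_\Phi(-,X)=h_{\Phi X}$ by Yoneda), making $\eta$ on representables exactly the structure maps of $\Phi$; the d\'evissage from representables to all of $D(\cA)$ uses that both $\Phi^*$ and $\Phi_*$ are exact and coproduct-preserving and that the representables compactly generate, which is fine; and (ii)$\Leftrightarrow$(iii) is the dual Bousfield statement (for $\Phi^*\dashv\Phi_*$, fully faithful left adjoint $\Leftrightarrow$ unit iso $\Leftrightarrow$ $\Phi_*$ induces an equivalence $D(\cB)/\ker\Phi_*\xrightarrow{\sim}D(\cA)$), which is standard once one knows $\Phi_*$ has a left adjoint at all. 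One cosmetic nit: you say $\Phi^*$ ``commutes with cones because it is a left adjoint''---commutation with cones is exactness (it is a triangulated functor), not adjointness---but both facts hold, so nothing is affected.
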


\begin{proof}(i)$\Leftrightarrow$(iv). Indeed, we have that $(\Phi\otimes\Phi^{op})_*I_{\cB}(X,Y)=\cB(\Phi(X),\Phi(Y)).$ Further, the morphism \eqref{eq:unit_on_bimodules} is given exactly by the morphisms $\cA(X,Y)\to\cB(\Phi(X),\Phi(Y)).$ Hence,
\eqref{eq:unit_on_bimodules} is a quasi-isomorphism of bimodules iff $\Phi$ is quasi-fully-faithful.

(ii)$\Leftrightarrow$(iii) follows from the adjunction and Remark \ref{remark:about_localizations}.

(ii)$\Leftrightarrow$(iv) is proved exactly in the same way as in Proposition \ref{prop:hom_epi}.
\end{proof}

So the properties of being homological epimorphism and quasi-fully-faithful are dual to each other.

\begin{defi}We call a DG functor $\Phi:\cA\to\cB$ between small DG categories a localization if the functor $\Phi^*:D_{perf}(\cA)\to D_{perf}(\cB)$ is a localization up to direct summands.\end{defi}

The above Theorem \ref{th:Neeman} directly implies the following.

\begin{cor}\label{cor:criterion_of_loc} Let $\Phi:\cA\to\cB$ be a functor between small DG categories. The following are equivalent:

(i) $\Phi$ is a localization;

(ii) $\Phi$ is a homological epimorphism and the kernel of $\Phi^*:D(\cA)\to D(\cB)$ is generated (as a localizing subcategory) by its intersection with $D_{perf}(\cA).$\end{cor}

We finish this section by mentioning a situation when a homological epimorphism is automatically a localization.

\begin{lemma}\label{lem:automatic_localization}Suppose that we have a commutative square of DG functors
$$
\begin{CD}
\cA @>F_1>> \cB\\
@VG_1VV @VG_2VV\\
\cC @>F_2>> \cD
\end{CD}
$$
Suppose that $F_1$ and $F_2$ are quasi-fully-faithful, $G_1$ is a localization, $G_2$ is a homological epimorphism, and the induced functor $\bar{G_2}:\cB/F_1(\cA)\to\cD/F_2(\cC)$ is a Morita equivalence. Then $G_2$ is a localization. Moreover, we have an identification of subcategories: $$\ker(G_2^*:D_{perf}(\cB)\to D_{perf}(\cD))=F_1^*(\ker(G_1^*:D_{perf}(\cA)\to D_{perf}(\cC))).$$ 
\end{lemma}

\begin{proof}By Corollary \ref{cor:criterion_of_loc}, we only need to show that $\ker(G_2^*:D(\cB)\to D(\cD))$ is identified with $F_1^*(\ker(G_1^*:D(\cA)\to D(\cC))).$ Let us denote by $\pr_1:\cB\to\cB/F_1(\cA),$ $\pr_2:\cD\to\cD/F_2(\cC)$ the projection DG functors. We have semi-orthogonal decompositions
\begin{equation}\label{eq:standard_SOD's}D(\cB)=\langle\pr_{1*}D(\cB/F_1(\cA)),F_1^*D(\cA)\rangle,\quad D(\cD)=\langle\pr_{2*}D(\cD/F_2(\cC)),F_2^*D(\cC)\rangle.\end{equation}
The functor $G_2^*:D(\cB)\to D(\cD)$ is compatible with the semi-orthogonal decompositions \eqref{eq:standard_SOD's}, and it induces the functors $\bar{G_2}^*$ and $G_1^*$ on the components. By our assumptions, the functor $\bar{G_2}^*$ is an equivalence. It follows that $\ker(G_2^*:D(\cB)\to D(\cD))$ is contained in $F_1^*(D(\cA)).$ This yields the assertion.\end{proof}

\section{Gluing of DG categories}
\label{sec:gluing_conceptual}

First we recall the notion of gluing, following the notation of \cite{Or}.

\begin{defi}Let $\cA$ and $\cB$ be small DG categories, and $M\in D(\cA\otimes\cB^{op})$ a bimodule. 

1) Define the DG category
$\cC=\cA\with_M\cB$ as follows. First, $Ob(\cA\with_M\cB)=Ob(\cA)\sqcup Ob(\cB).$ The complexes of morphisms are defined by
$$\cC(X,Y)=\begin{cases}\cA(X,Y) & \text{ for }X,Y\in\cA;\\
\cB(X,Y) & \text{ for }X,Y\in\cB;\\
M(X,Y) & \text{ for }X\in\cA,\, Y\in\cB;\\
0 & \text{ for }X\in\cB,\,Y\in\cA.\end{cases}$$
The composition in $\cA\with_M\cB$ is given by the compositions in $\cA,$ $\cB$ and by the bimodule structure on $M$.

2) The DG category $\cA\oright_M \cB$ is defined as follows. Its objects are triples $(X,Y,\mu),$ where $X\in Ob(\cA),$ $Y\in Ob(\cB),$ and $\mu\in M^0(X,Y)$ a closed element of degree zero. The graded $\mk$-modules of morphisms are defined by $\Hom((X,Y,\mu),(X',Y',\mu'))=\cA(X,X')\oplus\cB(Y,Y')\oplus M(X,Y')[-1].$ The differential is given by the formula $$d(f_1,f_2,f_{12})=(d(f_1),d(f_2),-d(f_{12})-f_2\mu+\mu'f_1).$$ The composition is given by
$$(f_1,f_2,f_{12})\circ (g_1,g_2,g_{12})=(f_1\circ g_1,f_2\circ g_2,f_{12}g_{1}+(-1)^{|f_2|}f_2g_{12}).$$\end{defi}

These two versions of gluing are related to each other as follows.

\begin{prop}\label{prop:two_gluings}1) There is a natural fully faithful DG functor $\Phi:\cA\oright_M\cB\to \cS\cF_{fg}(\cA\with_M\cB),$ given on objects by $(X,Y,\mu)\mapsto Cone(h_X\xto{\mu} h_Y).$ Moreover, $\Phi$ is a Morita equivalence.

2) If both $\cA$ and $\cB$ are weakly (resp. strongly) pre-triangulated, then $\Phi$ is a quasi-equivalence (resp. a DG equivalence).\end{prop}

\begin{proof}Straightforward checking.\end{proof}

\begin{remark}It follows from Proposition \ref{prop:two_gluings} 1) that for any DG functor $F:\cA\with_M\cB\to\cC,$ where $\cC$ is strongly pre-triangulated, we have a natural DG functor $F':\cA\oright_M\cB\to\cC,$ where $F'(X,Y,\mu)=Cone(F(\mu):F(X)\to F(Y))$ (the DG functor $F'$ is well defined defined up to a canonical DG isomorphism). We use this observation implicitly in the sequel.\end{remark}

We now describe the operations of gluing in terms of adjoint functors, following Tabuada \cite{Tab3}.

\begin{defi}{\cite[Definition 12.1]{Tab3}} An upper triangular DG category is a triple $(\cA,\cB,M),$ where $\cA$ and $\cB$ are small DG categories, and $M\in\cB\mhyphen\Mod\mhyphen\cA$ is a DG bimodule. A morphism $(\cA_1,\cB_1,M_1)\to (\cA_2,\cB_2,M_2)$ is given by a triple $(F_{\cA},F_{\cB},F_M),$ where $F_{\cA}:\cA_1\to\cA_2$ and $F_{\cB}:\cB_1\to\cB_2$ are DG functors, and $F_M:M_1\to (F_{\cA}\otimes F_{\cB}^{op})_*M_2$ is a morphism of DG bimodules. The composition is defined in the natural way. 
\end{defi}

The category of upper triangulated DG categories is denoted by $\dgcat_{\mk}^{tr}.$ Note that we have a natural functor $I:\dgcat_{\mk}\to\dgcat_{\mk}^{tr},$ $I(\cA)=(\cA,\cA,I_{\cA}).$

\begin{prop}\label{prop:gluings_as_adjoints_on_the_nose}1) The functor $I:\dgcat_{\mk}\to\dgcat_{\mk}^{tr}$ has a left adjoint $\with:\dgcat_{\mk}^{tr}\to\dgcat_{\mk},$ given by $(\cA,\cB,M)\mapsto \cA\with_M\cB.$

2) The functor $I:\dgcat_{\mk}\to\dgcat_{\mk}^{tr}$ has a right adjoint $\oright:\dgcat_{\mk}^{tr}\to\dgcat_{\mk},$ given by $(\cA,\cB,M)\to \cA\oright_M\cB.$\end{prop}

\begin{proof}Both assertions essentially follow directly from the definitions. To prove 1), it suffices to note that a DG functor $F:\cA\with_M\cB\to\cC$ is determined by the restricted functors $F_{\cA}:\cA\to\cC,$ $F_{\cB}:\cB\to\cC,$ and by the morphisms $M(X,Y)\to \cC(F_{\cA}(X),F_{\cB}(Y))$ for $X\in\cA,$ $Y\in\cB.$ The latter define a morphism of $\cA\mhyphen\cB$-bimodules $F_M:M\to (F_{\cA}\otimes F_{\cB}^{op})_*I_{\cC}.$ This gives a morphism $(F_{\cA},F_{\cB},F_M):(\cA,\cB,M)\to I(\cC)$ in $\dgcat_{\mk}^{tr}.$ Conversely, a morphism $(\cA,\cB,M)\to I(\cC)$ defines a DG functor $\cA\with_M\cB\to\cC.$ This proves 1).

To prove 2), let us note that a morphism $(F_1,F_2,f):I(\cC)\to (\cA,\cB,M)$ in $\dgcat_{\mk}^{tr}$ gives us a DG functor $\Phi:\cC\to \cA\oright_M\cB,$ by the formula 
\begin{equation}\label{eq:morphism_from_diagonal}\Phi(X)=(F_1(X),F_2(X),f(\one_X)).\end{equation}
Conversely, given $\Phi:\cC\to \cA\oright_M\cB,$ we define the functors $F_1,$ $F_2$ to be the compositions $$F_1:\cC\to \cA\oright_M\cB\to\cA,\quad F_2:\cC\to \cA\oright_M\cB\to\cB.$$ The morphism $f:I_{\cC}\to (F_1\otimes F_2^{op})_*M$ is uniquely determined by \eqref{eq:morphism_from_diagonal}. This proves 2).\end{proof}

According to \cite[Proposition 12.6 and Theorem 12.9]{Tab3}, the category $\dgcat_{\mk}^{tr}$ admits two natural structures of a cofibrantly generated model category. In the first one, the weak equivalences are {\it total quasi-equivalences,} i.e. the morphisms $F:(\cA,\cB,M)\to (\cA',\cB',M')$ such that $F_{\cA}$ and $F_{\cB}$ are quasi-equivalences, and $F_M$ is a quasi-isomorphism. Equivalently, the DG functor $\with(F)$ is required to be a quasi-equivalence. We denote the corresponding homotopy category by $\Ho(\dgcat_{\mk}^{tr}).$ 

In the second model structure, the weak equivalences are {\it total Morita equivalences}, which are defined similarly, but now $F_{\cA}$ and $F_{\cB}$ are required to be Morita equivalences. Equivalently, $\with(F)$ is required to be a Morita equivalence. We denote the corresponding homotopy category by $\Ho_M(\dgcat_{\mk}^{tr}).$

\begin{cor}\label{cor:gluings_as_adjoints_homotopic}1) The functor $I$ sends quasi-equivalences to total quasi-equivalences, and the functors $\with,\oright$ send total quasi-equivalences to quasi-equivalences. In particular, these functors induce the functors $\Ho(I):\Ho(\dgcat_{\mk})\to \Ho(\dgcat_{\mk}^{tr})$ and $\Ho(\with),\Ho(\oright):\Ho(\dgcat_{\mk}^{tr})\to \Ho(\dgcat_{\mk}).$ The functor $\Ho(\with)$ (resp. the functor $\Ho(\oright)$) is left (resp. right) adjoint to $\Ho(I).$

2) The functor $I$ sends Morita equivalences to total Morita equivalences. The functors $\with,\oright$ send total Morita equivalences to Morita equivalences. In particular, these functors induce the functors $\Ho_M(I):\Ho_M(\dgcat_{\mk})\to\Ho_M(\dgcat_{\mk}^{tr})$ and $\Ho_M(\with),\Ho_M(\oright):\Ho_M(\dgcat_{\mk}^{tr})\to \Ho_M(\dgcat_{\mk}).$ The functor $\Ho_M(\with)$ (resp. the functor $\Ho_M(\oright)$) is left (resp. right) adjoint to $\Ho_M(I).$\end{cor}

We need the following statement on morphisms in $\Ho(\dgcat_{\mk}^{tr}).$ 

\begin{prop}\label{prop:Homs_of_bimodules_give_functors}1) Let $\cA,$ $\cB$ be small DG categories. We have a natural functor $\tilde{U}_{\cA,\cB}:Z^0(\cB\mhyphen\Mod\mhyphen \cA)\to\dgcat_{\mk}^{tr},$ given on objects by $\tilde{U}_{\cA,\cB}(M)=(\cA,\cB,M).$
The functor $\tilde{U}_{\cA,\cB}$ sends quasi-isomorphisms to total quasi-equivalences. In particular, it induces a functor
$U_{\cA,\cB}:D(\cA\otimes\cB^{op})\to \Ho(\dgcat_{\mk}^{tr}).$

2) Let $F_{\cA}:\cA\to\cA',$ $F_{\cB}:\cB\to \cB'$ be DG functors between small DG categories, and take $M\in\cB\mhyphen\Mod\mhyphen \cA,$ $M'\in\cB'\mhyphen\Mod\mhyphen \cA'.$ Then we have natural maps
\begin{equation}\label{eq:Homs_in_dgcat_k^tr}\tilde{u}:Z^0(\Hom_{\cA\otimes\cB^{op}}(M,(F_{\cA}\otimes F_{\cB}^{op})_*M'))\to \Hom_{\dgcat_{\mk}^{tr}}((\cA,\cB,M),(\cA',\cB',M'))\end{equation}
\begin{equation}\label{eq:Homs_in_Ho_dgcat_k^tr}u:\Hom_{D(\cA\otimes\cB^{op})}(M,(F_{\cA}\otimes F_{\cB}^{op})_*M')\to \Hom_{\Ho(\dgcat_{\mk}^{tr})}((\cA,\cB,M),(\cA',\cB',M')).\end{equation}\end{prop}

\begin{proof}1) The functor $\tilde{U}_{\cA,\cB}$ is defined on morphisms by $\tilde{U}_{\cA,\cB}(f)=(\id_{\cA},\id_{\cB},f).$ The remaining assertion follows immediately from the definition of a total quasi-equivalence.

2) The map $\tilde{u}$ is given by $\tilde{u}(f)=(F_{\cA},F_{\cB},f).$ The map $u$ is given by $u(f)=g\circ U_{\cA,\cB}(f),$ where $g$ is the natural morphism
$$g=(F_{\cA},F_{\cB},\id):(\cA,\cB,(F_{\cA}\otimes F_{\cB}^{op})_*M')\to (\cA',\cB',M')\quad\text{in }\Ho(\dgcat_{\mk}^{tr}).$$\end{proof}

Finally, we need the following observation on filtered colimits in $\dgcat_{\mk}^{tr}.$

\begin{prop}\label{prop:filtered_colimits_dgcat^tr} Let $\{(\cA_j,\cB_j,M_j)\}_{j\in J}$ be a filtered diagram in $\dgcat_{\mk}.$ Then we have $\colim_J(\cA_j,\cB_j,M_j)=(\cA,\cB,M),$ where
$$\cA=\colim_J\cA_j,\quad \cB=\colim_J\cB_j,\quad M=\colim_J(\cA\tens{\cA_j}M_j\tens{\cB_j}\cB)$$
(note that we take non-derived tensor product).\end{prop}

\begin{proof}This follows directly from the definition of $\dgcat_{\mk}^{tr},$ and from the standard adjunction between extension of scalars and restriction of scalars.\end{proof}

\section{More on gluing}
\label{sec:gluing}

We now prove some technical results about gluing of DG categories. For a DG functor $F:\cD\to\cE$ let us denote by $N_F\in\cD\mhyphen\Mod\mhyphen \cE$ the corresponding DG bimodule given by the formula
$$N_F(X,Y)=\cE(X,F(Y)).$$
Clearly, we have
$$F^*\cong -\stackrel{\bL}{\tens{\cD}}N_F:D(\cD)\to D(\cE),\quad F_*\cong -\stackrel{\bL}{\tens{\cE}}N_{F^{op}}:D(\cE)\to D(\cF).$$

We recall the notion of (derived) adjoint bimodules. Let $M\in\cD\mhyphen\Mod\mhyphen \cE,$ and $N\in\cE\mhyphen\Mod\mhyphen \cD.$ We say that $M$ is left adjoint to $N$ if we have morphisms
$$\tau:I_{\cD}\to M\stackrel{\bL}{\tens{\cE}}N\text{ in }D(\cD\otimes\cD^{op}),\quad 
\varepsilon:N\stackrel{\bL}{\tens{\cD}}M\to I_{\cE}\text{ in }D(\cE\otimes\cE^{op})$$
such that the following compositions are equal to the identity:
$$M\xto{\tau\otimes 1} M\stackrel{\bL}{\tens{\cE}}N\stackrel{\bL}{\tens{\cD}}M\xto{1\otimes \varepsilon}M, 
\quad N\xto{1\otimes \tau} N\stackrel{\bL}{\tens{\cE}}M\stackrel{\bL}{\tens{\cD}}N\xto{\varepsilon\otimes 1}N.$$

In the case of such adjunction, $M$ and $N$ uniquely determine each other up to a natural isomorphism in the derived category. Moreover, $M$ and $N$ are $\cE$-perfect, and we have $N\cong\bR\Hom_{\cE}(M,I_{\cE})$ in $D(\cD\otimes\cE^{op}).$ We write $M^R$ for $N,$ and $N^{L}$ for $M.$ In this section it will be convenient for us to write $Fiber(-)$ instead of $Cone(-)[-1].$

The following lemma is simply a DG interpretation of mutation of semi-orthogonal decompositions of triangulated categories.

\begin{lemma}\label{lem:adjoint_gluing}Let $\cA$ and $\cB$ be small DG categories, and $M\in \Perf_{\cA}(\cA\otimes\cB^{op}).$ Then for the right adjoint bimodule $M^R\in \Perf_{\cA^{op}}(\cB\otimes\cA^{op})$
the DG categories $\cC:=\cA\oright_M\cB$ and $\cB\oright_{M^R}\cA$ are naturally Morita equivalent.\end{lemma}

\begin{proof}Replacing $\cA$ and $\cB$ by Morita equivalent DG categories, we may assume that the bimodule $M$ is of the form $N_F,$ where $F:\cB\to\cA$ is a DG functor. The right adjoint bimodule $M^R$ is then quasi-isomorphic to $N_{F^{op}}.$

Define a DG functor $\Psi_{\cB}:\cB\to \cC$ by $\Psi_{\cB}(X)=(F(X),X,\id_{F(X)})\in\cC$ on objects, and by $\Psi_{\cB}(f)=(F(f),f,0)$ on morphisms. Since
$$\cC(\Psi_{\cB}(X_1),\Psi_{\cB}(X_2))=Fiber(\cA(F(X_1),F(X_2))\oplus\cB(X_1,X_2)\xto{(\id,F(X_1,X_2))} \cA(F(X_1),F(X_2))),$$
we see that $\Psi_{\cB}$ is quasi-fully-faithful.  

Further, we define the DG fully faithful functor $\Psi_{\cA}:\cA\to \cC$ by $\Psi_{\cA}(Y)=(Y,0,0).$ We have that \begin{equation}\label{eq:semiorth_mutation}\cC(\Psi_{\cA}(Y),\Psi_{\cB}(X))=Fiber(\cA(Y,F(X))\xto{\id}\cA(Y,F(X)))\end{equation}
is an acyclic complex. Further, we have an isomorphism
\begin{equation}\label{eq:non-zero_Homs_mutation}\cC(\Psi_{\cB}(X),\Psi_{\cA}(Y))\cong\cA(F(X),Y)=N_{F^{op}}(X,Y).\end{equation}

Altogether, we get a quasi-fully-faithful DG functor
$$\Psi:\cB\with_{N_{F^{op}}}\cA\to\cC,$$ given by the functors $\Psi_{\cB},\Psi_{\cA}$ and the isomorphisms \eqref{eq:non-zero_Homs_mutation}. 
Moreover, its image generates $D_{\perf}(\cB\with_{N_{F^{op}}}\cA).$ Therefore, $\Psi$ is a Morita equivalence. By Proposition \ref{prop:two_gluings}, the DG categories $\cB\with_{N_{F^{op}}}\cA$ and $\cB\oright_{N_{F^{op}}}\cA$ are Morita equivalent. This proves the proposition.\end{proof}

\begin{lemma}\label{lem:functor_from_gluing}Let $\cA$ and $\cB$ be small DG categories and $M\in \cB\mhyphen\Mod\mhyphen \cA$ a bimodule.
 Suppose that we are given a small DG category $\cC.$

1)A pair of DG functors $(\Phi_{\cA}:\cA\to\cC,\,\Phi_{\cB}:\cB\to\cC)$ induces the maps
$$\tilde{\phi}:Z^0(\Hom_{\cA\otimes\cB^{op}}(M,(\Phi_{\cA}\otimes\Phi_{\cB}^{op})_*I_{\cC}))\to\Hom_{\dgcat_{\mk}}(\cA\with_M\cB,\cC)$$
$$\phi:\Hom_{D(\cA\otimes\cB^{op})}(M,(\Phi_{\cA}\otimes\Phi_{\cB}^{op})_*I_{\cC})\to\Hom_{\Ho(\dgcat_{\mk})}(\cA\with_M\cB,\cC).$$

2) A pair of DG functors $(\Phi_{\cA}:\cC\to\cA,\,\Phi_{\cB}:\cC\to\cB)$ induces the maps
$$\tilde{\psi}:Z^0(\Hom_{\cC\otimes\cC^{op}}(I_{\cC},(\Phi_{\cA}\otimes\Phi_{\cB}^{op})_*M))\to\Hom_{\dgcat_{\mk}}(\cC,\cA\oright_M\cB).$$
$$\psi:\Hom_{D(\cC\otimes\cC^{op})}(I_{\cC},(\Phi_{\cA}\otimes\Phi_{\cB}^{op})_*M)\to\Hom_{\Ho(\dgcat_{\mk})}(\cC,\cA\oright_M\cB).$$\end{lemma}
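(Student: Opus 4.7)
The plan is to unpack definitions for Part 1 and, for Part 2, combine this with the cone construction available in the pre-triangulated envelope; in both cases, strict morphisms are obtained from derived ones via cofibrant replacement.

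For $\tilde{\phi}$ in Part 1, the construction is essentially tautological. A DG functor $G:\cA\sqcup_M\cB\to\cC$ whose restrictions to $\cA$ and $\cB$ coincide with $\Phi_{\cA}$ and $\Phi_{\cB}$ is precisely the extra data of maps of complexes $M(X,Y)\to\Hom_{\cC}(\Phi_{\cA}(X),\Phi_{\cB}(Y))$ for each $X\in\cA$, $Y\in\cB$, compatible with the left $\cA$-action and right $\cB$-action (which encode compositions in $\cA\sqcup_M\cB$). Such a compatible family is exactly a morphism of $\cA$-$\cB$-bimodules $M\to(\Phi_{\cA}\otimes\Phi_{\cB}^{op})_*I_{\cC}$. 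Thus $\tilde{\phi}$ is the composition of this canonical identification with the forgetful map from ``extensions of $\Phi_{\cA},\Phi_{\cB}$'' into all DG functors.

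For $\phi$ in Part 1, the idea is to represent a derived morphism by a strict one and then invert a quasi-equivalence. Choose a cofibrant resolution $p:\tilde{M}\to M$ in the projective model structure on $\text{Mod-}(\cA\otimes\cB^{op})$. Any class $f\in\Hom_{D(\cA\otimes\cB^{op})}(M,(\Phi_{\cA}\otimes\Phi_{\cB}^{op})_*I_{\cC})$ is represented, uniquely up to chain homotopy, by a genuine bimodule map $\tilde{f}:\tilde{M}\to(\Phi_{\cA}\otimes\Phi_{\cB}^{op})_*I_{\cC}$. Applying $\tilde{\phi}$ to $\tilde{f}$ produces a DG functor $G_{\tilde{f}}:\cA\sqcup_{\tilde{M}}\cB\to\cC$. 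In parallel, $p$ itself induces a DG functor $\iota_p:\cA\sqcup_{\tilde{M}}\cB\to\cA\sqcup_M\cB$ which is the identity on $\cA,\cB$ and is $p$ on the crossed morphism complexes; as $p$ is a quasi-isomorphism, $\iota_p$ is a quasi-equivalence of DG categories, hence invertible in $\Ho(\dgcat_k)$. We set $\phi(f):=G_{\tilde{f}}\circ\iota_p^{-1}$. Standard comparison arguments for any two cofibrant resolutions, together with the fact that chain-homotopic bimodule maps produce DG functors related by an explicit natural quasi-isomorphism, show that $\phi$ is well-defined.

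For Part 2, the strategy is to use the pre-triangulated structure of $\cA\times_M\cB$ to synthesize $\Phi_{\cA}$ and $\Phi_{\cB}$ into a single functor. After choosing a cofibrant replacement $\tilde{M}\to M$ and an appropriate cofibrant resolution of $I_{\cC}$, an element $f\in\Hom_{D(\cC\otimes\cC^{op})}(I_{\cC},(\Phi_{\cA}\otimes\Phi_{\cB}^{op})_*M)$ is represented by a genuine bimodule morphism $\tilde{f}$. Evaluating on each $Z\in\cC$ (via lifts of $\id_Z$) produces a closed degree-$0$ element $m_Z\in\tilde{M}(\Phi_{\cA}(Z),\Phi_{\cB}(Z))$, which is the data of a morphism $\Phi_{\cA}(Z)\to\Phi_{\cB}(Z)$ in $\cA\sqcup_{\tilde{M}}\cB$; set $\Psi(Z):=\Cone(m_Z)\in\cA\times_{\tilde{M}}\cB$. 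The bimodule property of $\tilde{f}$ guarantees that a morphism $g:Z\to Z'$ in $\cC$ induces a morphism of cones $\Psi(g):\Psi(Z)\to\Psi(Z')$ compatibly with composition. This gives a DG functor $\Psi:\cC\to\cA\times_{\tilde{M}}\cB$; composing with the Morita equivalence $\cA\times_{\tilde{M}}\cB\simeq\cA\times_M\cB$ (invertible in $\Ho_M(\dgcat_k)$ since $\tilde{M}\to M$ is a quasi-isomorphism) yields $\psi(f)$.

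The main obstacle is the construction in Part 2: ensuring that the cone construction can be performed functorially in $Z$ (not merely object-by-object), and that the resulting morphism in $\Ho_M(\dgcat_k)$ is independent of the cofibrant choices and of the strict representative of $f$. The cleanest way to handle this is to use a functorial cofibrant resolution such as a bar-type construction for $I_\cC$, so that the assignment $Z\mapsto m_Z$ is manifestly natural; alternatively, one can invoke the adjunction $\Hom_{D(\cC\otimes\cC^{op})}(I_{\cC},(\Phi_{\cA}\otimes\Phi_{\cB}^{op})_*M)\cong\Hom_{D(\cA\otimes\cB^{op})}((\Phi_{\cA}\otimes\Phi_{\cB}^{op})^*I_{\cC},M)$ to transfer the data to the side of Part 1 before performing the cone construction.
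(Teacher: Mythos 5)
Your treatment of Part~1 is essentially the same as the paper's: $\tilde{\phi}$ is tautological, and $\phi$ is obtained by representing a derived class by a strict bimodule map out of a resolution of $M$ and then inverting the induced quasi-equivalence between the two gluings (the paper represents the class by a roof rather than by a single cofibrant replacement, but this is immaterial).

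In Part~2, however, there is a genuine gap. You assert that, after choosing a suitable (functorial, bar-type) cofibrant resolution, the assignment $Z\mapsto\Cone(m_Z)$ together with the linear data $\Psi(g)$ assembles into a strict DG functor $\Psi:\cC\to\cA\times_{\tilde M}\cB$. That is not true, even with the bar resolution $B(I_\cC)$. A strict bimodule map $\tilde f:B(I_\cC)\to(\Phi_\cA\otimes\Phi_\cB^{op})_*M$ is, by inspection of the graded pieces, a collection of maps $\tilde f(X_0,\dots,X_n):\Hom(X_0,X_1)\otimes\cdots\otimes\Hom(X_{n-1},X_n)\to M(\Phi_\cA X_0,\Phi_\cB X_n)[-n]$ for all $n\geq 0$, and the chain-map condition precisely says that the putative compatibilities $\Psi(g'g)=\Psi(g')\Psi(g)$ hold only up to a homotopy controlled by $\tilde f(g'\otimes g)$, that homotopy being compatible with triple compositions only up to a higher homotopy controlled by $\tilde f(-\otimes-\otimes-)$, and so on. In other words the output is an $A_\infty$-functor $\cC\to\cA\times_M\cB$, not a DG functor, and this is unavoidable: throwing away the higher components of $\tilde f$ destroys strict functoriality. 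This is exactly what the paper does — it builds $\psi(\tilde f)$ explicitly as an $A_\infty$-functor whose $n$-th component is the composite of $\tilde f(X_0,\dots,X_n)$ with the obvious inclusion into $\Hom(\psi(\tilde f)(X_0),\psi(\tilde f)(X_n))[1-n]$, and then observes that an $A_\infty$-functor (well-defined up to $A_\infty$-homotopy) still gives a well-defined morphism in $\Ho_M(\dgcat_k)$. Your ``main obstacle'' paragraph comes close — the bar resolution is indeed the right tool — but the fix is not to upgrade $\Psi$ to a strict functor (which cannot be done), rather to accept the $A_\infty$-functor and rectify it afterwards.

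Your second suggested workaround, transferring across the adjunction $\Hom_{D(\cC\otimes\cC^{op})}(I_{\cC},(\Phi_{\cA}\otimes\Phi_{\cB}^{op})_*M)\cong\Hom_{D(\cA\otimes\cB^{op})}((\Phi_{\cA}\otimes\Phi_{\cB}^{op})^*I_{\cC},M)$, does not help: the resulting morphism has $M$ as its target, not its source, so the $\tilde\phi$ machinery of Part~1 (which produces functors \emph{out of} the gluing from maps \emph{out of} $M$) does not apply, and the desired functor here goes \emph{into} the gluing.
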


\begin{proof}1) The maps $\tilde{\phi},$ $\tilde{\psi}$ come from the map $\tilde{u}$ in Proposition \ref{prop:Homs_of_bimodules_give_functors} 2), and the adjunctions from Proposition \ref{prop:gluings_as_adjoints_on_the_nose}. Similarly, the maps $\phi,$ $\psi$ come from the map $u$ in Proposition \ref{prop:Homs_of_bimodules_give_functors} 2), and the adjunctions from Corollary \ref{cor:gluings_as_adjoints_homotopic} 1).\end{proof}

It will be convenient to use a specific way to compute derived tensor products, namely via {\it bar resolutions}. For the rest of this section we denote by $M\stackrel{\bL}{\otimes}_{\cA}N$ the complex of $\mk$-modules which is defined as follows to be the following complex of $\mk$-modules. As a graded $\mk$-module, it is given by
$$(M\stackrel{\bL}{\otimes}_{\cA}N)^{gr}=\bigoplus\limits_{n\geq 0}\bigoplus\limits_{X_0,\dots,X_n\in\cA}M(X_n)\otimes\cA(X_{n-1},X_n)[1]\otimes\dots\otimes\cA(X_0,X_1)[1]\otimes N(X_0).$$
Let us write $x[f_n\mid\dots\mid f_1]y$ instead of $x\otimes f_n\otimes\dots\otimes y.$ The differential is given by
\begin{multline*}d(x[f_n\mid\dots\mid f_1]y)=d_M(x)[f_n\mid\dots f_1]y+\sum\limits_{i=1}^n (-1)^{\veps_i}x[f_n\mid\dots\mid d_{\cA}(f_i)\mid \dots\mid f_1]y+\\
(-1)^{\veps_0}x[f_n\mid\dots\mid f_1]d_N(y)+(-1)^{|x|}xf_n[f_{n-1}\mid\dots\mid f_1]y+\\
\sum\limits_{i=1}^{n-1}(-1)^{\veps_i}x[f_n\mid\dots\mid f_{i+1}f_i\mid\dots\mid f_1]y-(-1)^{\veps_1}x[f_n\mid\dots\mid f_2]f_1y,\end{multline*}
where $\veps_i=|x|+\sum\limits_{j=i+1}^n|f_j|-n+i,$ for $0\leq i\leq n.$

In fact, the complex $M\stackrel{\bL}{\otimes}_{\cA}N$ is naturally isomorphic to $M\otimes_{\cA}\tilde{Y}\otimes_{\cA}N,$ $\tilde{Y}$ is the bar resolution of the diagonal $\cA\mhyphen\cA$ bimodule, as defined in \cite[Example 6.6]{Ke}.

Note that we have a natural morphism of complexes $$f:M\stackrel{\bL}{\otimes}_{\cA}N\to M\otimes_{\cA}N,\quad f(x[f_n\mid\dots\mid f_1]y)=\begin{cases}x\otimes y & \text{ for }n=0;\\
0 & \text{ for }n>0.\end{cases}$$

More generally, given an $\cA\mhyphen\cB$-bimodule $M$ and a $\cB\mhyphen\cC$-bimodule $N,$ we define the $\cA\mhyphen\cC$-bimodule $M\stackrel{\bL}{\otimes}N$ by the formula
$$(M\stackrel{\bL}{\otimes}N)(X,Y)=M(X,-)\stackrel{\bL}{\tens{\cB}}N(-,Y).$$
This operation on bimodules is naturally associative on the chain level, which makes it convenient for us.

By Lemma \ref{lem:functor_to_comm_diag} 2), The composition $\pi_*\Phi:\mD^b(T)\to \mD^b_S(Y)$ is zero in $\Ho(\dgcat_{\mk}).$

Our main result reduces to the following statement.

\begin{theo}\label{th:main_localization}1) Within the above notation, suppose that all infinitesimal neighborhoods $S_n\subset Y,$ $n\geq 1,$ are nonrational loci of $Y$ with respect to $f.$ Then the DG functor
$$\pi_*:\cD_{coh,T}(X,S)\to \mD^b(S)$$ is a localization,
and its kernel is generated by the image of $\Phi:\mD^b(T)\to\cD_{coh,T}(X,S).$

2) Suppose that moreover the morphism $f$ is an isomorphism outside of $S.$ Then the DG functor $$\pi_*:\cD_{coh}(X,S)\to \mD^b(Y)$$ is a localization, and again the kernel of $\pi_*$
is generated by the image of the composition $\Phi:\mD^b(T)\to\cD_{coh,T}(X,S)\hto\cD_{coh}(X,S).$\end{theo}

We break the proof of Theorem \ref{th:main_localization} into several lemmas.
First we reduce to part 1).

\begin{lemma}\label{lem:red_to_1}The statement 2) of Theorem \ref{th:main_localization} follows from the statement 1).\end{lemma}

\begin{proof}Let us note that we have quasi-equivalences $\cD_{coh}(X,S)/\cD_{coh,T}(X,S)\xto{\sim}\mD^b(X\setminus T),$ $\mD^b(Y)/\mD^b(S)\xto{\sim}\mD^b(Y\setminus S).$ By the assumption of Theorem \ref{th:main_localization} 2), the pushforward $\mD^b(X\setminus T)\to\mD^b(Y\setminus S)$ is a quasi-equivalence. The asertion is obtained from a direct application of Lemma \ref{lem:automatic_localization} to the commutative square \eqref{eq:comm_support}. 
\end{proof}

\begin{lemma}\label{lem:colimit_of_neighb}Let $Q$ be a noetherian separated scheme and $Z\subset Q$ a closed subscheme. Then the natural DG functor $$\colim\limits_n \mD^b(Z_n)\to \mD^b_{Z}(Q)$$ is a quasi-equivalence.\end{lemma}

\begin{proof}By \cite[Lemma 2.10]{TV}, the DG category $\colim\limits_n \mD^b(Z_n)$ is weakly pre-triangulated. Its homotopy category is identified with $\colim\limits_n D^b_{coh}(Z_n),$ which is thus triangulated.

The statement of the lemma is therefore equivalent to the equivalence of triangulated categories $\colim\limits_n D^b_{coh}(Z_n)\to D^b_{coh,Z}(Q).$ By \cite[Lemma 2.1]{Or2}, we have an equivalence $$D^b_{coh,Z}(Q)\simeq D^b(\Coh_Z Q).$$
Since $\Coh_Z Q\simeq\colim\limits_n \Coh Z_n,$ we have $D^b(\Coh_Z Q)\simeq\colim\limits_n D^b_{coh}(Z_n).$ This proves the lemma.\end{proof}

Denote by $i_{m,n}:S_m\to S_n,$ $i_n:S_n\to Y,$ $j_{m,n}:T_m\to T_n,$ $j_n:T_n\to X$ the natural inclusions. Also, denote by $p_n:T_n\to S_n$ the natural projections. For any $0<m<n,$ we have a commutative diagram
$$\begin{CD}
T_m @>j_{m,n}>> T_n @>j_n>> X\\
@V p_m VV                              @V p_n VV @V f VV\\
S_m @>i_{m,n}>> S_n @>i_n>> Y.
\end{CD}$$

We put
$$\cD_n:=\cD_{coh}(T_n,S).$$
We have natural DG functors $J_{m,n}:\cD_m\to\cD_n,$ $J_n:\cD_n\to\cD.$ Also, we have
the functors $P_n:\cD_n\to \mD^b(S_n),$ defined in the same way as $\pi_*$ above. Moreover, all these DG functors fit into commutative diagrams
$$\begin{CD}
\cD_m @> J_{m,n} >> \cD_n @> J_n >> \cD_{coh,T}(X,S)\\
@V P_m VV                              @V P_n VV @V \pi_* VV\\
\mD^b(S_m) @>i_{m,n*}>> \mD^b(S_n) @>i_{n*}>> \mD^b_S(Y).
\end{CD}$$

\begin{cor}\label{cor:colim_D_n}The natural DG functor
$$\colim\limits_n \cD_n\to \cD_{coh,T}(X,S)$$
is a quasi-equivalence.\end{cor}
\begin{proof}This follows immediately from Lemma \ref{lem:colimit_of_neighb} and Lemma \ref{lem:filtered_daigrams_gluing}. 
\end{proof}

Since by our assumption $S$ is a nonrational locus of $S_n$ with respect to $p_n:T_n\to S_n,$ we have by Proposition \ref{prop:hom_epi_pi*}
that the DG functor $P_n:\cD_n\to \mD^b(S_n)$ is a homological epimorphism.

As above, we have the DG quasi-functors $\Phi_n:\mD^b(T)\to \cD_n,$ $n\geq 1.$ We have
$$J_{m,n}\Phi_m=\Phi_n,\quad J_n\Phi_n=\Phi\quad\text{in }\Ho(\dgcat_{\mk}).$$

\begin{lemma}\label{lem:red_to_neighb}Let us suppose that all the DG functors
$$P_n:\cD_n\to \mD^b(S_n)$$ are localizations, and the kernel of $P_n$ is generated by $\Phi_n(\mD^b(T)).$ Then the
functor $$\pi_*:\cD_{coh,T}(X,S)\to \mD^b_{coh,S}(Y)$$ is also a localization, and its kernel is generated by $\Phi(\mD^b(T)).$\end{lemma}

\begin{proof}Indeed, by assumption, the DG functor $$P_n:\cD_n/\Phi_n(\mD^b(T))\to \mD^b(S_n)$$
is a quasi-equivalence. Hence, the DG functor
\begin{multline*}\cD_{coh,T}(X,S)/\Phi(\mD^b(T))=(\colim\limits_n \cD_n)/\Phi(\mD^b(T))=\colim\limits_n(\cD_n/\Phi_n(\mD^b(T)))\\
\to \colim\limits_n(\mD^b(S_n))\cong \mD^b_S(Y)\end{multline*}
is a quasi-equivalence (because the DG quotient commutes with colimits). This proves the lemma.\end{proof}

Hence, we reduced the statement of the theorem to proving that the DG functors $P_n$ are localizations with prescribed kernels. We start with the functor $P_1.$

\begin{lemma}\label{lem:P_1}The DG quasi-functor $\Phi_1:\mD^b(T)\to\cD_1$ is quasi-fully-faithful, we have a semi-orthogonal decomposition
$$[\cD_1]=\langle D^b_{coh}(S),\Phi_1(D^b_{coh}(T))\rangle,$$
and the functor $[P_1]$ is the left semi-orthogonal projection onto $D^b_{coh}(S).$ In particular, the DG functor $P_1$ is a localization, and its kernel is generated by $\Phi_1(\mD^b(T)).$\end{lemma}

\begin{proof} This is a direct application of Lemma \ref{lem:gluing_and_mutation}, with $\cA=\mD^b(T),$ $\cB=\mD^b(S),$ $F=\pi_*.$
\end{proof}

The following Lemma is the key technical point in the proof.

\begin{lemma}\label{lem:infinitesimal_extension}Let $g:U\to V$ be a proper morphism of noetherian separated schemes, and $Z\subset V$ a nonrational locus of $V$ with respect to $g.$ Suppose that $U^\prime$ (resp. $V^\prime$) is a square-zero thickening of $U$ (resp. $V$),
and we have a commutative diagram
$$\begin{CD}
U @>\iota_U>> U^\prime\\
@V g VV                              @V g^\prime VV \\
V @>\iota_V>> V^\prime.
\end{CD}$$
Assume that $Z$ is also a nonrational locus of $V^\prime$ with respect to $g^\prime$.
Then we have a commutative square of DG functors
$$\begin{CD}
\cD_{coh}(U,Z) @>J_U>> \cD_{coh}(U^\prime,Z)\\
@V G VV                              @V G^\prime VV \\
\mD^b(V) @>\iota_{V*}>> \mD^b(V^\prime).
\end{CD}$$
Suppose that the DG functor $G$ is a localization. Then the DG functor $G^\prime$ is also a localization, and its kernel is generated by $J_U(\ker G).$
\end{lemma}

\begin{proof}Denote by $H\subset U$ the scheme-theoretic preimage $g^{-1}(Z),$ and by $j_{H,U^\prime}:H\to U^\prime$ the inclusion,
and by $h:H\to Z$ the projection. Also denote by $i_{Z,V^\prime}:Z\to V^\prime$ the natural inclusion.

Let $\cA_{U^\prime}$ (resp. $\cA_{V^\prime},$ $\cA_{H},$ $\cA_{Z}$) be a coherent sheaf of algebras over $\cO_{U'}$ (resp. $\cO_{V'},$ $\cO_H,$ $\cO_Z$) obtained by the Auslander-type construction (Subsection \ref{ssec:A-construction_coh}) from the triple $(U^\prime,I_U,2)$ (resp. $(V^\prime,I_V,2),$ $(H,0,2),$ $(Z,0,2)$). Here $I_U\subset\cO_{U'}$ (resp. $I_V\subset \cO_{V'}$) is the sheaf of ideals defining $U$ (resp. $V$). As in Subsection \ref{ssec:A-construction_coh}, we denote by $\rho_{U^\prime}:(U^\prime,\cA_{U^\prime})\to U^\prime$ (resp. $\rho_{V^\prime},$ $\rho_H,$ $\rho_Z$) the natural morphisms of nice ringed spaces.

We get a commutative diagram

$$\begin{CD}
(H,\cA_{H}) @>\widetilde{j_{H,U^\prime}}>> (U^\prime,\cA_{U^\prime})\\
@V \tilde{h} VV                              @V \tilde{g^\prime} VV \\
(Z,\cA_{Z}) @>\widetilde{i_{Z,V^\prime}}>> (V^\prime,\cA_{V^\prime}).
\end{CD}$$

We define the category $\cD_{coh}(\cA_{U^{\prime}},\cA_{Z})$ as a gluing
$$\cD_{coh}(\cA_{U^{\prime}},\cA_{Z}):=\mD^b(\cA_{U^{\prime}})\oright_{(\mD^b(\cA_H))}\mD^b(\cA_{Z}).$$

By Lemma \ref{lem:functor_from_comm_diag}, we have a DG functor $\tilde{G^\prime}:\cD_{coh}(\cA_{U^{\prime}},\cA_{Z})\to \mD^b(\cA_{V^\prime}).$ The triple of DG functors $(\rho_{U' *},\rho_{H*},\rho_{Z*})$ defines a DG functor $$\pi_U:\cD_{coh}(\cA_{U^{\prime}},\cA_{Z})\to \cD_{coh}(U^\prime,Z).$$
We get the following commutative diagram
\begin{equation}\label{eq:comm_diag}\begin{array}{ccc}
\cD_{coh}(U^\prime,Z) & \stackrel{\pi_U}{\leftarrow} & \cD_{coh}(\cA_{U^{\prime}},\cA_{Z})\\
G^\prime \downarrow & & \downarrow \tilde{G^\prime}\\
\mD^b(V^\prime) & \stackrel{\rho_{V^\prime *}}{\leftarrow} &
\mD^b(\cA_{V^\prime}).
\end{array}\end{equation}

Recall the semi-orthogonal decompositions (Proposition \ref{prop:A_S_SOD_coh})
$$\mD^b(\cA_{U^\prime})=\langle \mD^b(U),\mD^b(U)\rangle,\quad \mD^b(\cA_{Z})=\langle \mD^b(Z),\mD^b(Z)\rangle.$$
By Lemma \ref{lem:diagram_of_twelve_functors}, we have a semi-orthogonal decomposition
\begin{equation}\label{eq:decomp_U^prime}\cD_{coh}(\cA_{U^{\prime}},\cA_{Z})=\langle \cD_{coh}(U,Z),\cD_{coh}(U,Z)\rangle.\end{equation}
Also, we have a semi-orthogonal decomposition
\begin{equation}\label{eq:decomp_V^prime}\mD^b(\cA_{V^\prime})=\langle \mD^b(V),\mD^b(V)\rangle.\end{equation}
The functor $\tilde{G^\prime}$ is compatible with the decompositions \eqref{eq:decomp_U^prime}, \eqref{eq:decomp_V^prime} and induces the functor $G$ on both components.

For $j=1,2,$ we denote by $\iota_{j}:\cD_{coh}(U,Z)\to \cD_{coh}(\cA_{U^\prime},\cA_{Z})$ the functor induced by $i_{j*}:\mD^b(U)\to \mD^b(\cA_{U'}),$ $i_{j*}:\mD^b(H)\to\mD^b(\cA_H),$ $i_{j*}:\mD^b(Z)\to\mD^b(\cA_Z).$ Also, we denote by $\tilde{e}:\cD_{coh}(U,Z)\to \cD_{coh}(\cA_{U'},\cA_Z)$ the functor induced by $e^*:\mD^b(U)\to \mD^b(\cA_{U'}),$ $e^*:\mD^b(H)\to\mD^b(\cA_H),$ $e^*:\mD^b(Z)\to\mD^b(\cA_Z).$ Hence, in terms of the semi-orthogonal decomposition \eqref{eq:decomp_U^prime}, $\iota_1$ is
the embedding of the first semi-orthogonal component, and $\tilde{e}$ is the embedding of the second semi-orthogonal component.

\begin{prop}\label{prop:tildeG'-loc}1) The DG functor $\tilde{G^\prime}:\cD_{coh}(\cA_{U^{\prime}},\cA_{Z})\to \mD^b(\cA_{V^\prime})$ is a localization, with kernel generated by $\iota_1(\ker G),$ $\tilde{e}(\ker G)$.

2) The DG functor $\pi_U:\cD_{coh}(\cA_{U^{\prime}},\cA_{Z})\to \cD_{coh}(U^\prime,Z)$ is a localization, with kernel generated by the image of $\iota_2.$\end{prop}
\begin{proof}1) Analogously to Proposition \ref{prop:hom_epi_pi*}, we see that the DG functor $\tilde{G^\prime}$ is a homological epimorphism. Since $\tilde{G^\prime}$ induces the localization functor $G:\cD_{coh}(U,Z)\to \mD^b(V)$ on both semi-orthogonal components, it follows from Lemma \ref{lem:localizations_SOD} that $\tilde{G^\prime}$ is actually a localization. The assertion about the kernel follows automatically.

2) By Proposition \ref{prop:rho-loc_coh}, the DG functors $\rho_{U^\prime *},$ $\rho_{Z*}$ and $\rho_{H*}$ are localizatins. Hence, by Lemma \ref{lem:functor_between_gluings_from_comm_diags} (part 2) the DG functor  $\pi_U:\cD_{coh}(\cA_{U^{\prime}},\cA_{Z})\to \cD_{coh}(U^\prime,Z)$ is a localization. The assertion about the kernel is obvious.\end{proof}

\begin{cor}\label{cor:G'-loc}The DG functor $G^\prime:\cD_{coh}(U^\prime,Z)\to \mD^b(V^\prime)$ is a localization.\end{cor}

\begin{proof}We know that in the commutative diagram \eqref{eq:comm_diag} the DG functors $\pi_U,$ $\tilde{G^\prime}$ and $\rho_{V*}$ are localizations (by Proposition \ref{prop:tildeG'-loc}). Hence, the DG functor $G^\prime$ is also a localization.\end{proof}

We are left to controlling the kernel of $G^\prime.$

From the commutative diagram \eqref{eq:comm_diag} we have that
$$\ker(G^\prime)=\pi_U(\tilde{G^\prime}^{-1}(\ker \rho_{V^\prime *})),$$ up to direct summands. From the commutative diagram
\begin{equation}\begin{array}{ccc}
\cD_{coh}(U,Z) & \stackrel{\iota_2}{\to} & \cD_{coh}(\cA_{U^\prime},\cA_{Z})\\
G \downarrow & & \downarrow \tilde{G^\prime}\\
\mD^b(V) & \stackrel{i_{2*}}{\to} &
\mD^b(\cA_{V^\prime}),
\end{array}\end{equation}
we get that $\ker \rho_{V^\prime *}$ is generated by $\tilde{G^\prime}\iota_{2}(\cD_{coh}(U,Z)).$ It follows that $\tilde{G^\prime}^{-1}(\ker \rho_{V' *})$ is generated by $\ker \tilde{G^\prime}$ and $\iota_{2}(\cD_{coh}(U,Z)).$ By Proposition \ref{prop:tildeG'-loc}, we know that $\ker\tilde{G^\prime}$ is generated by $\iota_1(\ker G),$ $\tilde{e}(\ker G).$

Therefore, $\ker(G^\prime)$ is generated by the following:

1) $\pi_U\iota_1(\ker G)=J_{U}(\ker G);$

2) $\pi_U \tilde{e}(\ker G)=J_{U}(\ker G);$

3) $\pi_U\iota_2(\cD_{coh}(U,Z))=0.$

Therefore, $\ker(G')$ is generated by $J_U(\ker G).$ This proves the lemma.
\end{proof}

\begin{proof}[Proof of Theorem \ref{th:main_localization}]
By Lemma \ref{lem:P_1} and Lemma \ref{lem:infinitesimal_extension} we get by induction that each DG functor $P_n:\cD_{coh}(T_n,S)\to \mD^b(S_n)$ is a localization, and $\ker P_n$ is generated by $J_{1,n}\Phi_1(\mD^b(T))=\Phi_n(\mD^b(T)).$ Therefore, by Lemma \ref{lem:red_to_neighb} the DG functor
$\pi_*:\cD_{coh,T}(X,S)\to \mD^b_S(Y)$ is a localization, and its kernel is generated by $\Phi(\mD^b(T)).$ This proves part 1) of Theorem.

By Lemma \ref{lem:red_to_1} part 1) implies part 2). Theorem is proved.
\end{proof}

We also formulate here a result which is analogous to Theorem \ref{th:main_localization} but technically simpler to prove.

\begin{theo}\label{th:BO_special_case}1) Let $f:X\to Y$ be a proper morphism such that $\bR f_*\cO_X\cong \cO_Y.$ Assume that there is a subscheme $S\subset Y,$ such that all its infinitesimal neighborhoods $S_n,$ $n\geq 1,$ are nonrational loci of $Y$ with respect to $f.$ Again, we have a Cartesian square
$$
\begin{CD}X @< j << T\\
@V f VV @V p VV\\
Y @< i << S.\end{CD}
$$
Assume that the functor $\bR p_*:D^b_{coh}(T)\to D^b_{coh}(S)$ is a localization. Then the functor $\bR f_*:D^b_{coh,T}(X)\to D^b_{coh,S}(Y)$
is also a localization and $\ker\bR f_*$ is generated by $j_*(\ker \bR p_*).$

2) Suppose that moreover the morphism $f$ is an isomorphism outside of $S.$ Then the functor $\bR f_*:D^b_{coh}(X)\to D^b_{coh}(Y)$
is a localization and $\ker\bR f_*$ is generated by $j_*(\ker \bR p_*).$\end{theo}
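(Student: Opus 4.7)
The plan is to follow the strategy of Theorem~\ref{th:main_localization}, but in a much simplified form: no gluing is required since the relevant functor is simply $\bR f_*$ itself, and the stronger hypothesis $\bR f_*\cO_X\cong\cO_Y$ makes the homological epimorphism property immediate via the projection formula. I first reduce part 2) to part 1) exactly as in Lemma~\ref{lem:red_to_1}: since $f$ is an isomorphism outside $S$, every object in $\ker(\bR f_*\colon D^b_{coh}(X)\to D^b_{coh}(Y))$ is set-theoretically supported on $T$, so this kernel coincides with $\ker(\bR f_*\colon D^b_{coh,T}(X)\to D^b_{coh,S}(Y))$; combined with the equivalence $D^b_{coh}(X)/D^b_{coh,T}(X)\simeq D^b_{coh}(Y)/D^b_{coh,S}(Y)$ induced by $f$ being an isomorphism on complements, this yields part 2) from part 1).

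For part 1), I first reduce to finite thickenings via Lemma~\ref{lem:colimit_of_neighb}: one has $\mD^b_T(X)\cong\colim_n\mD^b(T_n)$ and $\mD^b_S(Y)\cong\colim_n\mD^b(S_n)$, with $\bR f_*$ the colimit of $\bR p_{n*}\colon\mD^b(T_n)\to\mD^b(S_n)$. Since DG quotients commute with filtered colimits, it suffices to show each $\bR p_{n*}$ is a localization with kernel generated by $j_{1,n*}(\ker\bR p_*)$. The non-rational center condition $I_{S_n}\cong\bR f_*I_{T_n}$ combined with $\bR f_*\cO_X\cong\cO_Y$ and the defining triangles of the ideals gives $\bR f_*\cO_{T_n}\cong\cO_{S_n}$, hence $\bR p_{n*}\cO_{T_n}\cong\cO_{S_n}$ since $i_n$ is affine. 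The projection formula then yields $\bR p_{n*}\bL p_n^*G\cong G$ for any quasi-coherent $G$ on $S_n$, so $\bL p_n^*$ is fully faithful on the unbounded derived category of quasi-coherents, equivalently $\bR p_{n*}$ is a Bousfield localization on ind-completions, which is equivalent to $\bR p_{n*}$ being a homological epimorphism of DG categories by Proposition~\ref{prop:hom_epi}.

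The main step is the kernel computation. Apply the Auslander-type construction of Subsection~\ref{ssec:A-construction_coh} to the triples $(T_n,I_T,n)$ and $(S_n,I_S,n)$ to obtain nice ringed spaces $(T_n,\cA_{T_n})$ and $(S_n,\cA_{S_n})$ together with the morphism $\widetilde{p_n}$ induced by the evident morphism of triples, fitting into the commutative square
$$
\begin{CD}
\mD^b(\cA_{T_n}) @>\widetilde{p_n}_*>> \mD^b(\cA_{S_n})\\
@V\rho_{T_n*}VV @V\rho_{S_n*}VV\\
\mD^b(T_n) @>\bR p_{n*}>> \mD^b(S_n).
\end{CD}
$$
Writing $\cK:=\ker\bR p_*$, the upper corners admit semi-orthogonal decompositions with $n$ copies of $\mD^b(T)$ and $\mD^b(S)$ respectively (Proposition~\ref{prop:A_S_SOD_coh}), and by Proposition~\ref{prop:map_of_triples_coh}(3) the functor $\widetilde{p_n}_*$ respects these and acts as $\bR p_*$ on each summand; iterating Lemma~\ref{lem:localizations_SOD} therefore shows $\widetilde{p_n}_*$ is a localization with kernel generated by $i_{k*}\cK$ for $k=1,\dots,n$. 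By Proposition~\ref{prop:rho-loc_coh} the vertical arrows are localizations with kernels generated by $i_{k*}\mD^b(T)$, respectively $i_{k*}\mD^b(S)$, for $k\geq 2$. Chasing the kernel of the composition $\rho_{S_n*}\widetilde{p_n}_*=\bR p_{n*}\rho_{T_n*}$ (preimages in $\mD^b(\cA_{T_n})$ of the generators $i_{k*}\mD^b(S)$ of $\ker\rho_{S_n*}$ can be found in $i_{k*}\mD^b(T)$ because $\bR p_*$ is essentially surjective up to summands) shows that its kernel is generated up to summands by $i_{1*}\cK$ together with $i_{k*}\mD^b(T)$ for $k\geq 2$. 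Applying $\rho_{T_n*}$, which is essentially surjective and kills $i_{k*}\mD^b(T)$ for $k\geq 2$ while sending $i_{1*}\cK$ to $j_{1,n*}\cK$, one concludes that $\ker\bR p_{n*}$ is generated up to summands by $j_{1,n*}\cK$. Combined with the homological epimorphism property, Proposition~\ref{prop:criterion_of_loc} then gives that $\bR p_{n*}$ is a localization, whence by passing to colimits in $n$ we obtain part 1).

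I expect the kernel chase in the last step to be the main obstacle: tracking direct-summand closures through the composition of two localizations, and in particular verifying that preimages of the generators of $\ker\rho_{S_n*}$ can be taken in $i_{k*}\mD^b(T)$, requires the full strength of both the Auslander semi-orthogonal decomposition and the compatibility statement of Proposition~\ref{prop:map_of_triples_coh}(3) with morphisms of triples.
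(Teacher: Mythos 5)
Your overall strategy is the right one and mirrors the mechanism of Lemma \ref{lem:infinitesimal_extension} stripped of the gluing: reduce part 2) to part 1) as in Lemma \ref{lem:red_to_1}, reduce to finite thickenings via Lemma \ref{lem:colimit_of_neighb}, observe that the non-rational-center hypothesis together with $\bR f_*\cO_X\cong\cO_Y$ forces $\bR p_{n*}\cO_{T_n}\cong\cO_{S_n}$ and hence, by projection formula and Grothendieck duality, makes $\bR p_{n*}$ a homological epimorphism, and then extract the kernel through the Auslander square. The reductions and the homological-epimorphism claim for $\bR p_{n*}$ are sound.

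There is, however, a genuine gap in the main step. To conclude from Lemma \ref{lem:localizations_SOD} that $\widetilde{p_n}_*:\mD^b(\cA_{T_n})\to\mD^b(\cA_{S_n})$ is a localization, it is \emph{not} enough that $\widetilde{p_n}_*$ respects the semi-orthogonal decompositions and restricts to the localization $\bR p_*$ on each component (which is what Proposition \ref{prop:map_of_triples_coh}(3) supplies); Lemma \ref{lem:localizations_SOD}(2) also requires that the induced morphisms of gluing bimodules be quasi-isomorphisms, and this is a nontrivial additional condition. In the paper's own treatment of the analogous step, Proposition \ref{prop:tildeG'-loc}(1), this is handled by first proving separately that the functor is a homological epimorphism via Grothendieck duality (as in Proposition \ref{prop:hom_epi_pi*}) and only then combining with the component-wise localizations. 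You need the same intermediate step here: show that $\widetilde{p_n}_*$ is a homological epimorphism by computing $\bR\widetilde{p_n}_*\cA_{T_n}\cong\cA_{S_n}$ as a coherent $\cA_{S_n}$-bimodule and invoking Corollary \ref{cor:projection_formula_coh}. This bimodule isomorphism reduces, entry by entry in the Auslander algebra, to $\bR f_*(I_T^a/I_T^b)\cong I_S^a/I_S^b$ for $0\le a<b\le n$, which follows from $\bR f_*\cO_X\cong\cO_Y$ together with the hypothesis that \emph{every} $S_m$, $m\le n$, is a non-rational center, not merely $S_n$. The part you flag as the main obstacle, the kernel chase through $\rho_{S_n*}$ and $\rho_{T_n*}$, is in fact correct as written; the missing piece is the homological epimorphism for $\widetilde{p_n}_*$, which logically precedes it.
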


\begin{proof}Analogously to Lemma \ref{lem:red_to_1}, we reduce the part 2) to the part 1).

The proof of part 1) follows essentially the same steps as for Theorem \ref{th:main_localization} 1), and it considerably simplifies because we do not need to consider gluings. Thus, instead of $\cD_{coh}(X,S)$ we have $\mD^b(X),$ and instead of $\cD_{coh}(T_n,S)$ we have $\mD^b(T_n).$ Lemma \ref{lem:P_1} is not needed in this framework because we already assumed that $\bR p_*$ is a localization. In the key technical step, Lemma \ref{lem:infinitesimal_extension}, we assume that the morphisms $g$ and $g'$ satisfy $\bR g_*\cO_U=\cO_V,$ $\bR g'_*\cO_{U'}=\cO_{V'},$ and we take 
$\mD^b(U)$ (resp. $\mD^b(U')$) instead of $\cD_{coh}(U,Z)$ (resp. $\cD_{coh}(U',Z)$). All the other arguments are the same.\end{proof}

\subsection{Categorical blow-ups: matrix factorizations}
\label{ssec:cat_blowup_MF}

As above, let $S$ be a nonrational locus of $Y$ with respect to a proper morphism $f:X\to Y.$ Again we have $T:=f^{-1}(S)\subset X,$ and denote by $i:S\to Y,$
 $j:T\to X$ the closed embeddings, and by $p:T\to S$ the morphism induced by $f.$

We fix a regular function $W\in\cO(Y).$ Further, we will denote by $W_X\in\cO(X),$ $W_S\in\cO(S),$ $W_T\in\cO(T)$ (and so on) the pullbacks of $W$ under natural morphisms.


We have a commutative diagram of DG functors
$$
\begin{CD}
\mD^{abs}(X,W_X) @< j_* << \mD^{abs}(T,W_T)\\
@V f_* VV @V p_* VV\\
\mD^{abs}(Y,W) @< i_* << \mD^{abs}(S,W_S).
\end{CD}
$$
We put
$$\cD_{coh}(X,S,W):=\mD^{abs}(X,W_X)\oright_{(\mD^{abs}(T,W_T))} \mD^{abs}(S,W_S).$$
By Lemma \ref{lem:functor_from_comm_diag}, 1) we have a natural DG functor
$$\pi_*:\cD_{coh}(X,S,W)\to \mD^{abs}(Y,W).$$

Similarly, we have the DG category
$$\cD_{coh,T}(X,S,W):=\mD^{abs}_T(X,W_X)\oright_{(\mD^{abs}(T,W_T))}\mD^{abs}(S,W_S),$$
Also, we have a DG functor
$$\pi_*:\cD_{coh,T}(X,S,W)\to \mD^{abs}_S(Y,W),$$
which we denote by the same letter. There is a commutative diagram of DG functors

\begin{equation}\label{eq:comm_supportMF}
\begin{CD}
\cD_{coh,T}(X,S,W) @>>> \cD_{coh}(X,S,W)\\
@V\pi_* VV                              @V\pi_* VV\\
\mD^{abs}_S(Y,W) @>>> \mD^{abs}(Y,W).
\end{CD}
\end{equation}

The horizontal arrows in \eqref{eq:comm_supportMF} are quasi-fully-faithful.

\begin{prop}\label{prop:hom_epi_pi*MF}The DG functors $\pi_*:\cD_{coh}(X,S,W)\to \mD^{abs}(Y,W)$ and $\pi_*:\cD_{coh,T}(X,S,W)\to \mD^{abs}_S(Y)$ are homological epimorphisms.\end{prop}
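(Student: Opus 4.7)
The plan is to mimic the proof of Proposition \ref{prop:hom_epi_pi*} almost verbatim, replacing Grothendieck duality for coherent sheaves with its matrix factorization analogue established in Appendix \ref{app:duality}. I would prove both statements simultaneously, noting that the second follows from the first after restriction to the full subcategories supported on $T$ and $S$ (which is automatic since the horizontal arrows in \eqref{eq:comm_supportMF} are quasi-fully-faithful).

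The starting point is the same exact triangle of complexes of coherent sheaves on $Y$ that appeared in the coherent case: from $I_S \cong \bR f_* I_T$ in $D^b_{coh}(Y)$ we obtain
$$\cO_Y \to \bR f_*\cO_X \oplus \bR i_*\cO_S \to \bR f_*\cO_T \to \cO_Y[1].$$
This triangle lives purely on the scheme $Y$ and does not involve $W$. For any matrix factorization $F \in \mD^{abs}(Y,W)$, applying $\bR\cHom_{\cO_Y}(-,F)$ termwise produces an exact triangle in $\mD^{abs}(Y,W)$. Invoking Grothendieck duality for quasi-coherent matrix factorizations on nice ringed spaces (Appendix \ref{app:duality}), I identify
$$\bR\cHom(\bR f_*\cO_X,F) \cong \bR f_* f^! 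F, \quad \bR\cHom(\bR i_*\cO_S,F) \cong \bR i_* i^! F,$$
and
$$\bR\cHom(\bR f_*\cO_T,F) \cong \bR f_* \bR j_* p^! i^! F,$$
yielding the functorial exact triangle
$$F[-1] \to \bR f_*\bR j_* p^! i^! F \to \bR f_* f^! F \oplus \bR i_* i^! F \to F.$$

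This functorial triangle translates directly into an exact triangle of DG bimodules over $\mD^{abs}(Y,W) \otimes \mD^{abs}(Y,W)^{op}$:
$$I_{\mD^{abs}(Y,W)}[-1] \to N_{f_*}\stackrel{\bL}{\otimes} N_{j_*}\stackrel{\bL}{\otimes} N_{p_*^{op}}\stackrel{\bL}{\otimes} N_{i_*^{op}} \to N_{f_*}\stackrel{\bL}{\otimes} N_{f_*^{op}} \oplus N_{i_*}\stackrel{\bL}{\otimes} N_{i_*^{op}} \to I_{\mD^{abs}(Y,W)},$$
where the derived tensor products are computed via bar resolutions. The acyclicity of the corresponding totalization is precisely criterion (ii) of Lemma \ref{lem:hom_epi_from_gluing} applied to the gluing datum defining $\cD_{coh}(W) = \mD^{abs}(X,W_X) \times_M \mD^{abs}(S,W_S)$ and the DG functor $\pi_*$. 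Hence $\pi_*$ is a homological epimorphism, and the supported variant is immediate from the same triangle.

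The main obstacle — and essentially the only non-formal ingredient — is the Grothendieck duality step for matrix factorizations on nice ringed spaces; once Appendix \ref{app:duality} is available, the argument is a routine transcription of the coherent case. A minor point to verify is that the natural triangle of structure complexes on $Y$ genuinely acts on matrix factorizations in the bimodule sense described above, which is a consequence of the well-behavedness of $\bR f_*$, $\bR i_*$, $f^!$, and $i^!$ on the $\Z/2$-graded derived categories, as recorded in Proposition \ref{prop:f_*_ringedMF}.
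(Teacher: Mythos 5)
Your proposal is exactly the paper's argument: the paper proves this by repeating the proof of Proposition \ref{prop:hom_epi_pi*} verbatim, replacing Grothendieck duality for coherent sheaves (Corollary \ref{cor:projection_formula_coh}) with its matrix factorization analogue (Corollary \ref{cor:projection_formula_MF}). Your transcription of the exact triangle, the bimodule triangle, and the invocation of Lemma \ref{lem:hom_epi_from_gluing} is correct and matches the intended proof.
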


\begin{proof} The proof is the same as for Proposition \ref{prop:hom_epi_pi*}. The only difference is that we use the Grothendieck duality for matrix factorizations (see Corollary \ref{cor:projection_formula_MF}).\end{proof}

By Lemma \ref{lem:functor_to_comm_diag}, we have a quasi-functor
$$\Phi:\mD^{abs}(T,W_T)\to\cD_{coh,T}(X,S,W),$$
and the composition $\pi_*\Phi:\mD^b(T)\to \mD^b_S(Y)$ is zero in $\Ho(\dgcat_{\mk}).$

Our main result reduces to the following statement.

\begin{theo}\label{th:main_localizationMF}1) Within the above notation, suppose that all infinitesimal neighborhoods $S_n\subset Y,$ $n\geq 1,$ are nonrational loci of $Y$ with respect to $f.$ Then the DG functor
$$\pi_*:\cD_{coh,T}(X,S,W)\to \mD^{abs}(Y,W)$$ is a localization,
and its kernel is generated by the image of $\Phi:\mD^{abs}(T,W_T)\to\cD_{coh,T}(X,S,W).$

2) Suppose that moreover the morphism $f$ is an isomorphism outside of $S.$ Then the DG functor $$\pi_*:\cD_{coh}(X,S,W)\to \mD^{abs}(Y,W)$$ is a localization, and again the kernel of $\pi_*$
is generated by the image of the composition $\Phi:\mD^{abs}(T,W_T)\to\cD_{coh,T}(X,S,W)\to\cD_{coh}(X,S,W).$\end{theo}

First we reduce to 1).

\begin{lemma}\label{lem:red_to_1MF}The statement 2) of Theorem \ref{th:main_localizationMF} follows from the statement 1).\end{lemma}

\begin{proof}The proof is the same as for Lemma \ref{lem:red_to_1}.
\end{proof}

\begin{lemma}\label{lem:colimit_of_neighbMF}Let $Q$ be a noetherian separated scheme, $Z\subset Q$ a closed subscheme and $W_Q\in\cO(Q)$ a regular function. Denote by $W_{Z_n}\in\cO(Z_n)$ the pullback of $W_Q$ onto $Z_n.$ Then the natural DG functor $$\colim\limits_n \mD^{abs}(Z_n,W_{Z_n})\to \mD^{abs}_{Z}(Q,W_Q)$$ is a Morita equivalence.\end{lemma}

\begin{proof}As in Lemma \ref{lem:colimit_of_neighb}, the statement is equivalent to the equivalence of triangulated categories $\colim\limits_n D^{abs}_{coh}(Z_n,W_{Z_n})\to D^{abs}_{coh,Z}(W_Q),$ up to direct summands. Analogously to \cite[Lemma 2.1]{Or2}, the functor $$D^{abs}(\coh_Z Q,W_Q)\to D^{abs}_{coh,Z}(Q,W_Q)$$ is an equivalence up to direct summands. Since
$\Coh_Z Q=\colim\limits_n \Coh Z_n,$ the functor
$\colim\limits_n D^{abs}_{coh}(Z_n,W_{Z_n})\to D^{abs}(\Coh_Z Q,W_Q)$ is an equivalence.
This proves the lemma.\end{proof}

As in the proof of Theorem \ref{th:main_localization}, denote by $i_{m,n}:S_m\to S_n,$ $i_n:S_n\to Y,$ $j_{m,n}:T_m\to T_n,$ $j_n:T_n\to X$ the natural inclusions, and by $p_n:T_n\to S_n$ the natural projections.
We put
$$\cD_n(W):=\cD_{coh}(T_n,S,W_{S_n}).$$

We have natural DG functors $J_{m,n}:\cD_m(W)\to\cD_n(W),$ $J_n:\cD_n(W)\to\cD_{coh}(X,S,W).$ Also, we have
the functors $P_n:\cD_n(W)\to \mD^{abs}(S_n,W_{S_n}),$ defined in the same way as $\pi_*$ above. All these DG functors fit into commutative diagrams
$$\begin{CD}
\cD_m(W) @> J_{m,n} >> \cD_n(W) @> J_n >> \cD_{coh,T}(X,S,W)\\
@V P_m VV                              @V P_n VV @V \pi_* VV\\
\mD^{abs}(S_m,W_{S_m}) @>i_{m,n*}>> \mD^{abs}(S_n,W_{S_n}) @>i_{n*}>> \mD^{abs}_S(Y,W).
\end{CD}$$

\begin{cor}The natural DG functor
$$\colim\limits_n \cD_n(W)\to \cD_{coh,T}(X,S,W)$$
is a Morita equivalence.\end{cor}
\begin{proof}This follows from Lemma \ref{lem:colimit_of_neighbMF} and Lemma \ref{lem:filtered_daigrams_gluing}.\end{proof}

Since by our assumption $S$ is a nonrational locus of $S_n$ with respect to $p_n:T_n\to S_n,$ we have by Proposition \ref{prop:hom_epi_pi*MF}
that the DG functor $P_n:\cD_n(W)\to \mD^{abs}(S_n,W_{S_n})$ is a homological epimorphism.

Define the DG quasi-functor $\Phi_n:\mD^{abs}(T,W_T)\to \cD_n(W)$ in the same way as the DG quasi-functor $\Phi$ above.  We have that
$$J_{m,n}\Phi_m=\Phi_n,\quad J_n\Phi_n=\Phi\quad\text{in }\Ho(\dgcat_{\mk}).$$

\begin{lemma}\label{lem:red_to_neighbMF}Assume that all the DG functors
$$P_n:\cD_n(W)\to \mD^{abs}(S_n,W_{S_n})$$ are localizations, and the kernel of $P_n$ is generated by $\Phi_n(\mD^{abs}(T,W_T)).$ Then the
functor $$\pi_*:\cD_{coh,T}(X,S,W)\to \mD^{abs}_S(Y,W)$$ is also a localization, and its kernel is generated by $\Phi(\mD^{abs}(T,W_T)).$\end{lemma}

\begin{proof}The proof is the same as for Lemma \ref{lem:red_to_neighb}.\end{proof}

Hence, we reduced the statement of the theorem to proving that the DG functors $P_n:\cD_n(W)\to \mD^{abs}(S_n,W_{S_n})$ are localizations and controlling their kernels. We start with the functor $P_1.$

\begin{lemma}\label{lem:P_1MF}The DG quasi-functor $\Phi_1:\mD^b(T)\to\cD_1$ is quasi-fully-faithful, we have a semi-orthogonal decomposition
$$\cD_1=\langle \mD^{abs}(S,W_S),\Phi_1(\mD^{abs}(T,W_T))\rangle,$$
and the DG functor $P_1$ is the left semi-orthogonal projection onto $\mD^{abs}(S,W_S).$ In particular, the DG functor $P_1$ is a localization, and its kernel is generated by $\Phi_1(\mD^{abs}(T,W_T)).$\end{lemma}

\begin{proof}This is a direct application of Lemma \ref{lem:gluing_and_mutation}, with $\cA=\mD^{abs}(T,W_T),$ $\cB=\mD^{abs}(S,W_S),$ $F=\pi_*.$
\end{proof}

The following Lemma is the analogue of Lemma \ref{lem:infinitesimal_extension} for matrix factorizations.

\begin{lemma}\label{lem:infinitesimal_extensionMF}Let $g:U\to V$ be a proper morphism of noetherian separated schemes, and $Z\subset V$ a nonrational locus of $V$ with respect to $g.$ Suppose that $U^\prime$ (resp. $V^\prime$) is a square-zero thickening of $U$ (resp. $V$),
and we have a commutative diagram
$$\begin{CD}
U @>\iota_U>> U^\prime\\
@V g VV                              @V g^\prime VV \\
V @>\iota_V>> V^\prime.
\end{CD}$$
Assume that $Z$ is also a nonrational locus of $V^\prime$ with respect to $g^\prime$. let $W_{V^\prime}\in\cO(V^\prime)$ be a regular function, and denote by $W_U\in\cO(U),$ $W_V\in\cO(V),$ $W_{U^\prime}\in\cO(U^\prime)$ its pullbacks.
Then we have a commutative square of DG functors
$$\begin{CD}
\cD_{coh}(U,Z,W_V) @>J_U>> \cD_{coh}(U^\prime,Z,W_{V^\prime})\\
@V G VV                              @V G^\prime VV \\
\mD^{abs}(V,W_V) @>\iota_{V*}>> \mD^{abs}(V^\prime,W_{V^\prime}).
\end{CD}$$
Suppose that the DG functor $G$ is a localization. Then the DG functor $G^\prime$ is also a localization, and its kernel is generated by $J_U(\ker G).$
\end{lemma}

\begin{proof} The proof is literally the same as for Lemma \ref{lem:infinitesimal_extension}.
\end{proof}

\begin{proof}[Proof of Theorem \ref{th:main_localizationMF}]
By Lemma \ref{lem:P_1MF} and Lemma \ref{lem:infinitesimal_extensionMF} we get by induction that each DG functor $P_n:\cD_{coh}(T_n,S,W_{S_n})\to \mD^{abs}(S_n,W_{S_n})$ is a localization, and $\ker P_n$ is generated by $J_{1,n}\Phi_1(\mD^{abs}(T,W_T))=\Phi_n(\mD^{abs}(T,W_T)).$ Therefore, by Lemma \ref{lem:red_to_neighbMF} the DG functor
$\pi_*:\cD_{coh,T}(X,S,W)\to \mD^{abs}_S(Y,W)$ is a localization, and its kernel is generated by $\Phi(\mD^{abs}(T,W_T)).$ This proves part 1) of Theorem.

By Lemma \ref{lem:red_to_1MF} part 1) implies part 2). Theorem is proved.
\end{proof}

\subsection{The construction of a smooth categorical compactification}
\label{ssec:constr_of_sm_comp}

In this Subsection we prove our main result.

\begin{theo}\label{th:smooth_compactification}Let $Y$ be a smooth separated scheme of finite type over a field $\mk$ of characteristic zero. Then

1) the DG category $\mD^b(Y)$ has a smooth categorical compactification of the form $\mD^b(\tilde{Y})\to \mD^b(Y),$ where
$\tilde{Y}$ is a smooth and proper variety.

2) for any regular function $W\in\cO(Y)$ the D($\Z/2$-)G category $\mD^{abs}(Y,W)$ has a $\Z/2$-graded smooth categorical compactification $C_W\to \mD^{abs}(X,W),$
with a semi-orthogonal decomposition $C_W=\langle \mD^{abs}(V_1,W_1),\dots,\mD^{abs}(V_m,W_m)\rangle,$ where each $V_i$ is a $\mk$-smooth variety and the morphisms $W_i:V_i\to\A_{\mk}^1$ are proper.\end{theo}

\begin{proof} 1). Our first observation is that the scheme $Y$ can be assumed to be proper. Indeed, if $Y$ is not proper, then by Nagata \cite{Nag} we can take some compactification $Y\subset \overline{Y},$ so that the restriction DG functor $\mD^b(\overline{Y})\to \mD^b(Y)$ is a localization, and the kernel $\mD^b_{\overline{Y}\setminus Y}(\overline{Y})$ is generated by a single object. Thus, $Y$ can be replaced by $\bar{Y}.$

From now on, we assume that $Y$ is proper. By \cite[Theorem 4.15]{Or}, it is sufficient to construct a smooth categorical compactification $C\to \mD^b(Y),$ such that $C$ has a semi-orthogonal decomposition
$C=\langle \mD^b(X_1),\dots,\mD^b(X_m)\rangle,$ where each $X_i$ is smooth and proper variety. We will obtain the DG category $C$ by the same construction as the Kuznetsov-Lunts categorical resolution \cite{KL}, with a slight restriction on the choice of parameters (see below). Also, our description is a bit different since we are dealing with derived categories of coherent sheaves, instead of perfect complexes.  

By \cite[Theorem 1.6]{BM}, there is a sequence of blow-ups with smooth centers
$$Y_n\to Y_{n-1}\to\dots\to Y_1\to Y,$$ such that $(Y_n)_{red}$ is smooth. We proceed by induction on $n.$

The base of induction is $n=0.$ In this case $Y_{red}$ is smooth and proper. Take the nilpotent radical $\cI\subset \cO_Y,$ and assume that $\cI^l=0.$ Applying the Auslander-type construction to the triple $(Y,\cI,l),$ we get a nice ringed space $(Y,\cA_Y)$ with a morphism $\rho_Y:(Y,\cA_Y)\to Y.$  By Proposition \ref{prop:rho-loc_coh} the DG functor $\rho_{Y*}:\mD^b(\cA_{Y})\to \mD^b(Y)$ is a localization, and the kernel is generated by a single object. By Proposition \ref{prop:A_S_sm_prop_coh} the DG category $\mD^b(\cA_Y)$ is smooth and proper. By Proposition \ref{prop:A_S_SOD_coh}, the DG category $\mD^b(\cA_{Y})$ has a semi-orthogonal decomposition
$$\mD^b(\cA_{Y})=\langle \mD^b(Y_{red}),\dots,\mD^b(Y_{red}))\rangle,$$
where the number of components equals to $l.$ This proves the base of induction.

Now assume that the statement of induction is proved for some $n.$ We prove it for $n+1.$ Assume that the first blow-up $f:X=Y_1\to Y$ has a smooth center $Z\subset Y.$ By Proposition \ref{prop:nonrat_centers}, there is $l>0$ such that for all $k\geq l$ the infinitesimal neighborhood $Z_k$ of $Z$ is a nonrational locus of $Y$ with respect to $f.$ As in Subsection \ref{ssec:cat_blowup_sheaves} we have a DG category $\cD_{coh}(X,Z_l),$ and by Theorem \ref{th:main_localization} the DG functor
$$\pi_*:\cD_{coh}(X,Z_l)\to \mD^b(Y)$$ is a localization, and $\ker(\pi_*)$ is generated by a single object.

We would like to modify the DG category $\cD_{coh}(X,Z_l).$ Let us put $D:=f^{-1}(Z).$ Then $D_l=f^{-1}(Z_l).$ Let us take the nice ringed spaces $(D_l,\cA_{D_l})$
and $(Z_l,\cA_{Z_l}),$ associated to the triples $(D_l,I_D,l)$ and $(Z_l,I_Z,l)$ respectively. We have a commutative diagram

$$\begin{CD}
X @< j << D_l @< \rho_{D_l} << (D_l,\cA_{D_l})\\
@V f VV        @V p VV          @V \tilde{p} VV\\
Y @< i << Z_l @<\rho_{Z_l}<< (Z_l,\cA_{Z_l}).
\end{CD}$$

We put $$\cD_{coh}(X,\cA_{Z_l}):=\mD^b(X)\oright_{(\mD^b(\cA_{D_l}))} \mD^b(\cA_{Z_l}),$$
By Lemma \ref{lem:functor_between_gluings_from_comm_diags} 1), we have the DG functor
$$\rho(X,Z_l):\cD_{coh}(X,\cA_{Z_l})\to\cD_{coh}(X,Z_l).$$
By Lemma \ref{lem:functor_between_gluings_from_comm_diags} 2), and Proposition \ref{prop:rho-loc_coh}, the functor $\rho(X,Z_l)$ is a localization and its kernel is generated by a single object. Hence,
 the composition $$\tilde{\pi}_*:=\pi_*\circ\rho(X,Z_l):\cD_{coh}(X,\cA_{Z_l})\to\mD^b(Y)$$ is also a localization, with kernel generated by a single object.

Note that the morphism $j\rho_{D_l}:(D_l,\cA_{D_l})\to X$ has finite Tor-dimension (more precisely, its Tor-dimension equals $1$). Hence, the functor $(j\rho_{D_l})_*:D^b_{coh}(\cA_{D_l})\to D^b_{coh}(X)$ has a left adjoint $\bL(j\rho_{D_l})^*:D^b_{coh}(X)\to D^b_{coh}(\cA_{D_l}).$ It is given by a DG bimodule $M_L\in\mD^b(X)\mhyphen\Mod\mhyphen\mD^b(\cA_{D_l})$ which is left adjoint to 
$N_{(j\rho_{Z_l})_*}\in D(\mD^b(X)\otimes \mD^b(\cA_{D_l})^{op}).$
The DG bimodule
$$\tilde{M}_L:=M_L\stackrel{\bL}{\otimes}_{\mD^b(\cA_{D_l})} N_{\tilde{p}_*}\in \mD^b(X)\mhyphen\Mod\mhyphen\mD^b(\cA_{Z_l})$$
is then left adjoint to $\mD^b(\cA_{Z_l})\stackrel{\bL}{\otimes}_{\mD^b(\cA_{D_l})}\mD^b(X).$ By Lemma \ref{lem:adjoint_gluing}, we have a natural isomorphism
$$\cD_{coh}(X,\cA_{Z_l})\cong \mD^b(\cA_{Z_l})\oright_{\tilde{M}_L}\mD^b(X)\quad\text{in }\Ho(\dgcat_{\mk}).$$
By induction hypothesis, there is a smooth categorical compactification
$\pi^\prime_*:C^\prime\to \mD^b(X),$ with a semi-orthogonal decomposition
$C^\prime=\langle \mD^b(X_1),\dots,\mD^b(X_{m^\prime})\rangle.$ Define the DG category
$$C:=\mD^b(\cA_{Z_l})\oright_{N_{\pi^\prime_*}\stackrel{\bL}{\otimes}_{\mD^b(X)} \tilde{M}_L} C^\prime.$$
The bimodule $C'\mhyphen\mD^b(\cA_{Z_l})$-bimodule $N_{\pi^\prime_*}\stackrel{\bL}{\otimes}_{\mD^b(X)} \tilde{M}_L$ is perfect, because it corresponds to an exact functor $\bR\tilde{p}_*\circ\bL(j\rho_{D_l})^*\circ\pi'_*.$ Thus, the DG category $C$ is smooth and proper. Moreover, it has a semi-orthogonal decomposition
\begin{equation}\label{eq:SOD_smooth_compactification}C=\langle \mD^b(Z),\dots,\mD^b(Z), \mD^b(X_1),\dots,\mD^b(X_{m^\prime})\rangle,\end{equation} where the number of copies of $\mD^b(Z)$ equals to $l.$

Since $\pi^\prime_*:C^\prime\to \mD^b(X)$ is a localization, we have a natural isomorphism
$$(\id\otimes\pi^{'op}_*)^*(N_{\pi^\prime_*}\stackrel{\bL}{\tens{\mD^b(X)}}\tilde{M}_L)\xto{\sim} \tilde{M}_L\quad\text{in }D(\mD^b(\cA_{Z_l})\otimes\mD^b(X)^{op}).$$
By Lemma \ref{lem:functor_from_gluing} we have a DG quasi-functor
$$\Pi_*:C\to \cD_{coh}(X,\cA_{Z_l})\simeq \mD^b(\cA_{Z_l})\oright_{\tilde{M}_L}\mD^b(X).$$
It is a localization by Lemma \ref{lem:localizations_SOD}, and its kernel is generated by a single object.

Therefore, the composition DG quasi-functor $\tilde{\pi}_*\Pi_*:C\to \mD^b(Y)$ is a localization, the category $C$ has a semi-orthogonal decomposition \eqref{eq:SOD_smooth_compactification}, and the kernel of $\tilde{\pi}_*\Pi_*$ is generated by a single object. So, this DG quasi-functor is a smooth categorical compactification of $\mD^b(Y)$ with required properties. This proves the statement of induction for $n+1.$
The part 1) of the theorem is proved.

The part 2) is proved in a completely analogous way. We first reduce to the case when the morphism $W:Y\to\A^1$ is proper, and then proceed as in the proof of part 1).
\end{proof}

\begin{remark}\label{rem:choice_of_parameters}Note that in the proof of Theorem \ref{th:smooth_compactification} we choose a parameter $k>0$ such that not only $Z_k\subset Y$ is a nonrational locus of $Y$ with respect to $f:X\to Y,$ but also all the infinitesimal neighborhoods $Z_l$ with $l\geq k.$ In the construction of a categorical resolution of \cite{KL}, it is only needed that $Z_k$ is a nonrational locus.\end{remark}


\appendix

\section{Matrix factorizations}
\label{app:MF}

Let $(\cC,W)$ be an abelian $\Z_+$-category. Recall that in Section \ref{sec:exotic_derived} we defined the DG category $MF_{dg}(\cC,W),$ the homotopy category $K(\cC,W)$ and the absolute derived category $D^{abs}(\cC,W).$ We denote by $MF_{ab}(\cC,W):=Z^0(MF_{dg}(\cC,W))$ the abelian category of matrix factorizations.

When $\cC$ is small, we have a totalization functor
$$\Tot:D^b(MF_{ab}(\cC,W))\to D^{abs}(\cC,W).$$
it is given by \begin{equation}\label{eq:Tot_functor}\Tot(F^{\bullet})^{ev}=\bigoplus\limits_{n\even}(F^n)^{ev}\oplus \bigoplus\limits_{n\odd}(F^n)^{odd},\quad \Tot(F^{\bullet})^{odd}=\bigoplus\limits_{n\even}(F^n)^{odd}\oplus \bigoplus\limits_{n\odd}(F^n)^{ev}.\end{equation}
The non-zero components of the "differential" on $\Tot(F^{\bullet})$ are given by $(-1)^n\delta_{F^n}:F^n\to F^n,$ and $d:F^n\to F^{n+1}.$ Recall (from Definition \ref{defi:MF}) that for a matrix factorization $F$ we denote by $F^{ev}$ (resp. $F^{odd}$) its even (resp. odd) component.

In the case when $\cC$ has small coproducts and exact direct limits (AB5), we have a direct sum totalization functor
$$\Tot^{\oplus}:D^{co}(MF_{ab}(\cC,W))\to D^{co}(\cC,W).$$
The components of $Tot^{\oplus}(F^{\bullet})$ are defined by the same formulas as in \eqref{eq:Tot_functor} (but now the direct sums are infinite), and the "differential" is defined in the same way.

We recall the notion of a Frobenius category and its stable triangulated category \cite{Hap}.

\begin{defi}\label{defi:Frobenius_category}1) An exact category $\cA$ is called a Frobenius category if it has enough injective objects, enough projective objects, and these two classes of objects coincide.

2) If $\cA$ is a Frobenius exact category, its stable triangulated category $\un{\cA}$ is defined as follows. First, $Ob(\un{\cA})=Ob(\cA).$ Further, for $X,Y\in\cA$ the abelian group of morphisms $\Hom_{\un{\cA}}(X,Y)$ is defined as the quotient of $\Hom_{\cA}(X,Y)$ by the subgroup of morphisms which are the compositions $X\to I\to Y$ for some projective-injective object $I\in\cA.$ The composition is induced by that in $\cA.$ The shift functor is defined by choosing a conflation
$$0\to X\to I\to X[1]\to 0,$$
for each $X\in Ob(\un{\cA}).$ Further, each triangle is isomorphic to a standard one $(\overline{i},\overline{p},\overline{e}),$ given by a morphism of conflations
$$
\begin{CD}
0 @>>> X @> i >> Y @> p >> Z @>>> 0\\
@.      @V\id VV    @VVV   @V e VV @.\\
0 @>>> X @>>> I @>>> X[1] @>>> 0.
\end{CD}
$$\end{defi}

We also recall other versions of the definition of stable category of a Frobenius category. Denote by $K_{ac}(\Inj \cA)$ the homotopy category of acyclic complexes of injective objects in $\cA.$ This is a triangulated category and the functor
$$Z^0:K_{ac}(\Inj \cA)\to \un{\cA}$$
is an equivalence of triangulated categories.

Also, take the bounded derived category $D^b(\cA),$ and its full subcategory
$K^b(\Inj \cA)\subset D^b(\cA).$ We have a functor
\begin{equation}\label{eq:equiv_stable_cat}F:\un{\cA}\to D^b(\cA)/K^b(\Inj \cA),\quad F(X)=X[0].\end{equation}
The functor $F$ is also an equivalence of triangulated categories.

Let now $(\cC,W)$ be an additive $\Z_+$-category. Denote by $MF_{ex}(\cC,W)$ the exact category of matrix factorizations, with the following exact structure: a sequence $0\to F_1\to F_2\to F_3\to 0$ is a conflation in $MF_{ex}(\cC,W)$ if it is componentwise split exact in $\cC.$ In particular, if $\cC$ is abelian, then conflations in $MF_{ex}(\cC,W)$ are also short exact sequences in $MF_{ab}(\cC,W).$

\begin{prop}\label{prop:MF_frob_cat}Let $(\cC,W)$ be an additive $\Z_+$-category. Then the exact category
$MF_{ex}(\cC,W)$ is a Frobenius category. Moreover, projective-injective objects are precisely null-homotopic matrix factorizations, and the stable category of $MF_{ex}(\cC,W)$ is identified with $K(\cC,W).$\end{prop}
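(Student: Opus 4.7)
My plan is to identify the projective-injective objects explicitly as null-homotopic matrix factorizations and then match the resulting stable quotient with $K(\cC,W)$ as a triangulated category.

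\textbf{Step 1 (enough projective-injectives).} I would first observe that componentwise split short exact sequences form an exact structure on $MF_{ex}(\cC,W)$: the relevant axioms are self-dual and reduce to routine checks on the underlying $\Z/2$-graded objects. For each $F=(F,\delta_F)$, denote by $I(F)$ the ``cone of the identity'', namely the DG-theoretic mapping cone of $\mathrm{id}_F:F\to F$ taken inside $MF_{dg}(\cC,W)$. By construction $I(F)$ sits in a canonical componentwise split conflation
\[
0\to F\hookrightarrow I(F)\twoheadrightarrow F[1]\to 0,
\]
and $\mathrm{id}_{I(F)}$ is null-homotopic (it is the cone of an isomorphism), so $I(F)$ is null-homotopic. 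Dually, $I(F[-1])$ projects componentwise-split onto $F$. This gives functorial embeddings into, and surjections from, null-homotopic MFs.

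\textbf{Step 2 (null-homotopic $=$ projective-injective).} I would next prove that every null-homotopic MF is both injective and projective. For injectivity: given a conflation $0\to G\xrightarrow{\iota}H\to K\to 0$ with a graded retraction $r:H\to G$ (which exists since the sequence is componentwise split), and a morphism $\phi:G\to F$ with $F$ null-homotopic via $\mathrm{id}_F=\delta_Fh_F+h_F\delta_F$, the graded map $\phi\circ r$ is not a chain map but can be corrected by adding $h_F$ times its defect; a direct computation yields a genuine morphism of MFs $H\to F$ restricting to $\phi$ along $\iota$. Projectivity is parallel. For the converse, if $F$ is projective-injective then the conflation $F\hookrightarrow I(F)\twoheadrightarrow F[1]$ must split, realizing $F$ as a direct summand of a null-homotopic MF. The class of null-homotopic MFs is closed under direct summands (if $F=F_1\oplus F_2$ as MFs and $h$ contracts $F$, then $p_1\circ h\circ\iota_1$ contracts $F_1$), so $F$ is itself null-homotopic. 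Combined with Step 1, this verifies all Frobenius axioms and identifies the projective-injective objects exactly with the null-homotopic MFs.

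\textbf{Step 3 (identification with $K(\cC,W)$).} The stable category $\underline{MF_{ex}(\cC,W)}$ has the same objects as $MF_{ex}(\cC,W)$, so it suffices to identify morphisms and triangulated structures. A morphism $f:F\to G$ factors through a null-homotopic MF iff $f$ is null-homotopic in the DG sense: ``$\Rightarrow$'' is immediate by inserting the contraction of the intermediate object; ``$\Leftarrow$'', given a homotopy $h$ with $f=\delta_Gh+h\delta_F$, is effected by factoring $f$ as $F\to I(F)\to G$, where the first map is built from $\mathrm{id}_F$ and $h$, and the second from $f$ and $h$ (both are readily checked to be chain maps). Hence the ideal quotient by projective-injectives coincides with the null-homotopy equivalence relation, so the additive quotients agree. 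Finally, the canonical shift on $\underline{MF_{ex}(\cC,W)}$ is induced by the chosen conflations $F\hookrightarrow I(F)\twoheadrightarrow F[1]$, which reproduces the standard shift of matrix factorizations; distinguished triangles on the stable side are exactly the images of conflations, matching the mapping cone triangles in $K(\cC,W)$.

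The only genuinely non-formal point is the compatibility of triangulated structures in Step 3, and even this reduces to standard manipulations once the explicit model $I(F)$ is in place; the remainder of the argument is routine bookkeeping with componentwise split conflations and the chosen null-homotopies.
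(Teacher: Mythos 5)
Your argument is correct and broadly mirrors the paper's approach: both proofs organize everything around $\Cone(\id_F)$ and the two componentwise-split conflations $F\hookrightarrow\Cone(\id_F)\twoheadrightarrow F[1]$ and $\Cone(\id_{F[-1]})\twoheadrightarrow F$, which supply enough injectives and projectives. The main methodological difference is in showing that null-homotopic matrix factorizations are projective-injective. The paper does this representably: it observes
$\Hom_{MF_{ex}(\cC,W)}(\Cone(\id_F),G)\cong\Hom_{\cC}(F^{ev},G^{odd})\oplus\Hom_{\cC}(F^{odd},G^{ev})$ and dually, so that the relevant Hom functors factor through the forgetful functor to graded objects and are therefore automatically exact on componentwise-split conflations; this gives projective-injectivity of $\Cone(\id_F)$ in one line, after which the rest follows because any projective-injective is a retract of $\Cone(\id_F)$. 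You instead take an arbitrary null-homotopic $F$ with contraction $h_F$, choose a graded retraction $r$, and correct $\phi\circ r$ by $h_F$ acting on its defect. This homotopy-perturbation computation also works (one checks $\tilde\psi=\phi r - h_F\cdot d(\phi r)$ is a chain map extending $\phi$, using $dh_F=\id_F$) and proves the claim directly for all null-homotopic objects, not just the cones, at the cost of an explicit verification that the paper avoids. Your Step 3 spells out both the factorization-through-contractibles equivalence and the matching of shifts and triangles, which the paper leaves implicit. Both routes are standard; the paper's is tighter, yours is more self-contained.
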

\begin{proof}Note that for any matrix factorization $F$ the inclusion $F\to Cone(\id_F)$ is an inflation, and the projection $Fiber(\id_F)=Cone(\id_{F[-1]})\to F$ is a deflation. The objects $Cone(id_F)$ are projective-injective since
$$\Hom_{MF_{ex}(\cC,W)}(Cone(\id_F),G)\cong\Hom_{\cC}(F^{ev},G^{odd})\oplus \Hom_{\cC}(F^{odd},G^{ev}),$$
$$\Hom_{MF_{ex}(\cC,W)}(G,Cone(\id_F))\cong\Hom_{\cC}(G^{ev},F^{ev})\oplus \Hom_{\cC}(G^{odd},F^{odd}).$$
It follows that both projective and injective objects of $MF_{ex}(\cC,W)$ are exactly the direct summands of objects of the form $Cone(\id_F),$ and the category $MF_{ex}(\cC,W)$ has enough projectives and enough injectives. Thus, $MF_{ex}(\cC,W)$ is a Frobenius category.

Since all the objects $Cone(\id_F)$ are null-homotopic, we have a {\it full} functor $\pr:\un{MF_{ex}(\cC,W)}\to K(\cC,W),$ which is identity on objects. Further, any null-homotopic morphism $\varphi:F\to F'$ in $MF_{ex}(\cC,W)$ can be factored through $Cone(\id_F).$ It follows that the functor $\pr$ is also faithful, hence it is an equivalence. 

Finally, the projective-injective objects of $MF_{ex}(\cC,W)$ are exactly the objects which are isomorphic to zero in $K(\cC,W),$ i.e. null-homotopic matrix factorizations.\end{proof}

We always have an exact totalization functor
$$\Tot:D^b(MF_{ex}(\cC,W))\to K(\cC,W).$$
If $\cC$ has small coproducts, then we have a direct sum totalization functor
$$\Tot^{\oplus}:D^{co}(MF_{ex}(\cC,W))\to K(\cC,W).$$

\begin{prop}\label{prop:Tot_localizations} Let $(\cC,W)$ be an additive $\Z_+$-category.

1) If $\cC$ is small, then the  totalization functor
$$\Tot:D^b(MF_{ex}(\cC,W))\to K(\cC,W)$$ is a localization,
and its kernel is generated by objects of the form $Cone(\id_F)\in MF_{ex}(\cC,W),$ $F\in MF_{ex}(\cC,W).$

2) If $\cC$ is small and abelian, then the  totalization functor
$$\Tot:D^b(MF_{ab}(\cC,W))\to D^{abs}(\cC,W)$$ is a localization,
and its kernel is generated by objects of the form $Cone(\id_F)\in MF_{ab}(\cC,W),$ $F\in MF_{ab}(\cC,W).$

3) If $\cC$ has small coproducts, then the direct sum totalization functor
$$\Tot^{\oplus}:D^{co}(MF_{ex}(\cC,W))\to K(\cC,W)$$ is a localization, and its kernel is generated as a localizing subcategory by objects of the form $Cone(\id_F)\in MF_{ab}(\cC,W),$ $F\in MF_{ab}(\cC,W).$

4) If $\cC$ is abelain and has exact small coproducts, then the direct sum totalization functor
$$\Tot^{\oplus}:D^{co}(MF_{ab}(\cC,W))\to D^{co}(\cC,W)$$ is a localization, and its kernel is generated as a localizing subcategory by objects of the form $Cone(\id_F)\in MF_{ab}(\cC,W),$ $F\in MF_{ab}(\cC,W).$\end{prop}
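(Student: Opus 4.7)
The strategy is to exploit the Frobenius exact structure on $MF_{ex}(\cC,W)$ established by Proposition \ref{prop:MF_frob_cat}, combined with the classical equivalence \eqref{eq:equiv_stable_cat}: for any Frobenius exact category $\cA$, the natural functor $D^b(\cA) \to \un{\cA}$ is a localization with kernel $K^b(\Proj\,\cA)$. This handles part 1) directly, and the remaining parts follow by combining part 1) with the defining localizations of $D^{abs}$ and $D^{co}$ and a short diagram chase.

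For part 1), I first verify that $\Tot$ descends from $K^b(MF_{ex}(\cC,W))$ to $D^b(MF_{ex}(\cC,W))$. It suffices to kill conflations: if $0\to F_1\to F_2\to F_3\to 0$ is componentwise split in $MF_{ex}$, it gives a distinguished triangle in $\un{MF_{ex}}=K(\cC,W)$, so the convolution of the $3$-term complex $F_1\to F_2\to F_3$ is the cone on the identity of $F_3$, which vanishes in $K(\cC,W)$. Since $\Tot$ acts as the identity on degree-zero complexes, it coincides with the canonical Frobenius quotient $D^b(MF_{ex})\to\un{MF_{ex}}$; hence $\Tot$ is a localization with kernel $K^b(\Proj\,MF_{ex})$. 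By Proposition \ref{prop:MF_frob_cat} this kernel is the triangulated subcategory generated by the null-homotopic matrix factorizations $\Cone(\id_F)$.

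For part 2), I use the commutative square whose top row is $\Tot$ from part 1), whose bottom row is the desired $\Tot:D^b(MF_{ab})\to D^{abs}(\cC,W)$, whose left column is the natural quotient $D^b(MF_{ex})\to D^b(MF_{ab})$ (a localization with kernel generated by $3$-term complexes arising from short exact sequences in $MF_{ab}$), and whose right column is the defining quotient $K(\cC,W)\to D^{abs}(\cC,W)$. Both paths around the square give the same localization of $D^b(MF_{ex})$ onto $D^{abs}$, and one checks directly that both composites have the same kernel, namely the triangulated subcategory of $D^b(MF_{ex})$ generated by $K^b(\Proj)$ together with $3$-term complexes from SES in $MF_{ab}$. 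Hence the bottom row is a localization whose kernel is the image of $K^b(\Proj)$, which is generated by $\Cone(\id_F)$. Parts 3) and 4) proceed by the same pattern after replacing bounded derived categories by coderived ones and $\Tot$ by $\Tot^\oplus$; the latter commutes with small coproducts, so kernels are now understood as localizing subcategories.

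The main obstacle is establishing the coderived analog of the Frobenius equivalence needed for part 3): that the induced functor $D^{co}(MF_{ex})/\langle\Cone(\id_F)\rangle_{\mathrm{loc}}\to K(\cC,W)$ is an equivalence. I would exhibit a quasi-inverse via the assignment $F\mapsto F[0]$, which is well-defined on $\un{MF_{ex}}=K(\cC,W)$ because morphisms that factor through a projective-injective vanish in the quotient. One composite is obviously the identity, since $\Tot^\oplus(F[0])=F$; the other composite requires showing that every $X^{\bullet}\in D^{co}(MF_{ex})$ is isomorphic, modulo $\langle\Cone(\id_F)\rangle_{\mathrm{loc}}$, to $\Tot^\oplus(X^{\bullet})[0]$. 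This reduces, by filtering $X^{\bullet}$ by stupid truncations and writing it as a homotopy colimit of bounded subcomplexes, to the bounded case handled in part 1); here it is crucial that $\Tot^\oplus$ commutes with small coproducts and that $\langle\Cone(\id_F)\rangle_{\mathrm{loc}}$ is closed under them.
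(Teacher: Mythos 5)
Parts 1) and 2) of your proposal follow essentially the same route as the paper: for 1) you identify $\Tot$ with the canonical quotient $D^b(MF_{ex})\to \un{MF_{ex}(\cC,W)}=K(\cC,W)$ of a Frobenius category (using Proposition~\ref{prop:MF_frob_cat} and the equivalence~\eqref{eq:equiv_stable_cat}), and for 2) you chase the commuting square of localizations exactly as the paper does. These parts are fine.

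Part 3) is where you diverge, and there is a real gap. You propose to exhibit a quasi-inverse $\tilde R\colon K(\cC,W)\to D^{co}(MF_{ex})/\langle\Cone(\id_F)\rangle_{\mathrm{loc}}$ directly by $F\mapsto F[0]$, and to verify $\tilde R\circ\Tot^\oplus\cong\id$ by ``writing $X^\bullet$ as a homotopy colimit of bounded subcomplexes.'' That reduction does not go through: an unbounded complex has no natural filtration by \emph{bounded} subcomplexes. Stupid truncations $\sigma_{\geq -n}X^\bullet$ are bounded below but still unbounded above, and bounded-above cutoffs $\sigma_{\leq m}$ are quotients, not subobjects, so they do not yield a colimit presentation. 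What one can say is only that $D^{co}(MF_{ex})$ is generated as a localizing subcategory by objects concentrated in degree zero, and even granting this, concluding $\tilde R\circ\Tot^\oplus\cong\id$ would require a \emph{natural transformation} from $\id$ to $\tilde R\circ\Tot^\oplus$ on $D^{co}(MF_{ex})$ that is an isomorphism on degree-zero objects --- and there is no such transformation available before passing to the quotient, since $F\mapsto F[0]$ is not a functor on $D^{co}(MF_{ex})$. A subtler version of the same difficulty appears already in making $\tilde R$ a \emph{triangulated} functor: the internal shift $F\mapsto F[1]$ in $MF$ and the complex shift do not agree on the nose, and identifying them already uses that $\Cone(\id_F)[0]$ is killed.

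The paper avoids all of this by constructing a genuine \emph{right adjoint} $R\colon K(\cC,W)\to D^{co}(MF_{ex})$, namely $R(F)^n=\Cone(\id_{F[-n]})$ (the two-sided Frobenius-injective resolution), showing the counit $\Tot^\oplus R\to\id$ is an isomorphism (so $\Tot^\oplus$ is a Bousfield localization), and then computing $\Cone(G[0]\to R(G))\cong R(G)_{\leq -1}\oplus\tau_{\geq 0}R(G)$ for $G$ concentrated in degree zero, reducing via the localizing-generation statement. The functor $G\mapsto\Cone(G\to R\Tot^\oplus(G))$ is a \emph{bona fide} coproduct-preserving triangulated functor, so the reduction to generators is legitimate. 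Your $\tilde R$ is essentially the post-composition of the paper's $R$ with the quotient projection, so the two approaches are morally linked, but the adjoint formulation is what makes the verification actually run. To fix your argument you would need either to construct the natural transformation $\id\to\tilde R\circ\Tot^\oplus$ on $D^{co}(MF_{ex})$ (which is the unit of the adjunction and forces you to build $R$ anyway), or to supply a different argument for full faithfulness of $\Tot^\oplus$ on the quotient.
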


\begin{proof}1) By Proposition \ref{prop:MF_frob_cat}, the category $MF_{ex}(\cC,W)$ is Frobenius, and its stable category is $K(\cC,W).$ Clearly, the functor
$\Tot$ vanishes on $K^b(\Inj MF_{ex}(\cC,W))\subset D^b(MF_{ex}(\cC,W)).$ The induced functor
$$\Tot:D^b(MF_{ex}(\cC,W))/K^b(\Inj MF_{ex}(\cC,W))\to K(\cC,W)$$
is a quasi-inverse to the functor \eqref{eq:equiv_stable_cat} (for $\cA=MF_{ex}(\cC,W))$), hence it is an equivalence. Since the class $\Inj MF_{ex}(\cC,W)$
is additively generated by objects $Cone(\id_F),$ the assertion is proved.

2) This follows directly from 1). Indeed, the natural functor
$$D^b(MF_{ex}(\cC,W))\to D^b(MF_{ab}(\cC,W))$$ is a localization, and its kernel is generated by short exact sequences
in $MF_{ab}(\cC,W).$ Similarly, by definition, the natural functor
$$K(\cC,W)\to D^{abs}(\cC,W)$$ is also a localization, and its kernel is generated by totalizations of short exact sequences
in $MF_{ab}(\cC,W).$ Altogether, we have a commutative square of exact functors
$$
\begin{CD}
D^b(MF_{ex}(\cC,W))@>q_1 >> D^b(MF_{ab}(\cC,W))\\
@V\Tot VV @V\Tot VV\\
K(\cC,W) @>q_2 >> D^{abs}(\cC,W).
\end{CD}
$$
Both horizontal arrows and the left vertical arrow are localizations, hence so is the right vertical arrow. Its kernel equals to $q_1(\ker \Tot)$ and is therefore generated by $Cone(\id_F),$ $F\in MF_{ab}(\cC,W).$ This proves 2).

3) We claim that the totalization functor has a right adjoint
$$R:K(\cC,W)\to D^{co}(MF_{ex}(\cC,W)),$$
$R(F)^n=Cone(\id_{F[n]}),$ and the differential is the composition
$$Cone(\id_{F[n]})\to F[n+1]\to Cone(\id_{F_[n+1]}).$$ In other words, $R$ is the composition
$$K(\cC,W)\cong \un{MF_{ex}(\cC,W)}\cong K_{ac}(\Inj MF_{ex}(\cC,W))\hookrightarrow D^{co}(MF_{ex}(\cC,W)).$$
To define the adjunction counit, let us note that $\Tot^{\oplus}(R(F))=Cone(\bigoplus\limits_{n\in\Z}F_{(n)}\xto{\phi} \bigoplus\limits_{n\in\Z}F_{(n)}),$ where $F_{(n)}$ denotes the $n$-th copy of $F,$ and $\phi$ has components $\id:F_{(n)}\to F_{(n)}$ and $-\id:F_{(n)}\to F_{(n-1)}.$ Note that $\phi$ is a split monomorphism, and $\coker(\phi)=F.$ The adjunction counit is given by the projection
$\Tot^{\oplus}R(F)\to \coker(\phi)=F.$
To define the adjunction unit, we take a complex $G^{\bullet}\in D^{co}(MF_{ex}(\cC,W)),$ and define the map $G^n\to R(\Tot^{\oplus}(G^{\bullet}))^n$ to be the inclusion of a direct summand.

To see that the counit $\Tot^{\oplus}R(F)\to F$ is an isomorphism in $K(\cC,W)$, we note that
$$Cone(Tot^{\oplus}R(F)\to F)=\Tot^{\oplus}(\bigoplus\limits_{n\in\Z}F_{(n)}\xto{\phi} \bigoplus\limits_{n\in\Z}F_{(n)}\to F),$$
and the RHS is a totalization of short exact sequence in $MF_{ex}(\cC,W),$ hence zero object in $K(\cC,W).$


To control the kernel of $\Tot^{\oplus},$
it suffices to show that for any $G\in D^{co}(MF_{ex}(\cC,W))$ the object
$$Cone(G\to R\Tot^{\oplus}(G))$$ is contained in the localizing subcategory generated by $Cone(\id_F).$ Since the category $D^{co}(MF_{ex}(\cC,W))$
is generated (as a localizing subcategory) by objects concentrated in degree zero, we may and will assume that $G=G[0]$ is actually an object of $MF_{ex}(\cC,W),$
Further, the unit morphism $G[0]\to R(G)$ is a component-wise inflation, so we have $$Cone(G[0]\to R(G))\cong R(G)/G[0]=R(G)^{\leq -1}\oplus \tau^{\geq 0}R(G).$$
Here $R(G)^{\leq -1}$ is a bounded above complex, and its terms are $Cone(\id_{G[-n]})$ for $n\geq 1.$ Hence, it is contained in the localizing
subcategory of $D^{co}(MF_{ex}(\cC,W)))$ generated by $Cone(\id_F).$ Further, the complex $\tau^{\geq 0}R(G)$ is acyclic and bounded below, hence it is zero in $D^{co}(MF_{ex}(\cC,W)).$ This proves 3).

4) This follows from 3) in the same way as 2) follows from 1).
\end{proof}


For an abelian category $\cC,$ we have exact functors $$\Phi_0,\Phi_1:MF_{ab}(\cC,W)\to \cC,$$ where $\Phi_0(F)=F^{ev}$ (resp. $\Phi_1(F)=F^{odd}$) is the even (resp. odd) component of the underlying $\Z/2$-graded object of $\cC$ (keeping the notation of Definition \ref{defi:MF}). They have both left and right adjoints. Namely, for $G\in\cC,$ put $$\Psi_0(G):=(\id_G,W_G)\in MF_{ab}(\cC,W),\quad \Psi_1(G):=(W_G,\id_G).$$ It is easy to check that the functor $\Psi_0$ is left adjoint to $\Phi_0$ and right adjoint to $\Phi_1.$ Similarly, the functor $\Psi_1$ is left adjoint to $\Phi_1$ and right adjoint to $\Phi_0.$

If $\cC$ is a locally noetherian abelian category, then the abelian category $MF_{ab}(\cC,W)$ is also locally noetherian, and $MF_{ab}(\cC,W)_f=MF_{ab}(\cC_f,W).$

Given abelian $\Z_+$-categories $(\cC_1,W_1), (\cC_2,W_2)$ with exact filtered colimits, and a left exact $\Z_+$-functor
$F:\cC_1\to\cC_2,$ the right derived functor $\bR F:D^{co}(\cC_1,W_1)\to D^{co}(\cC_2,W_2)$  (if it exists) is defined by the standard universal property. In the case when $\cC_1$ is locally noetherian, the functor $\bR F$ is given by the composition
$$D^{co}(\cC_1,W_1)\xto{\sim} K(\Inj \cC_1,W_1)\xto{F}D^{co}(\cC_2,W_2).$$
In general, one can use Proposition \ref{prop:Tot_localizations} to construct the (left and right) derived functors for matrix factorizations using the adapted classes of objects. A bit different but conceptually similar approach has been developed in \cite{BDFIK}.

Proposition \ref{prop:Tot_localizations} provides a very efficient tool for reducing the statements and constructions for exotic derived categories matrix factorizations to similar statements for conventional derived categories of abelian categories (see e.g. \cite[Remark 2.7]{EP}). Here we only mention several applications which we need in the present paper. 


\begin{prop}\label{prop:D^abs_to_D^abs}Let $(\cC_1,W_1),$ $(\cC_2,W_2)$ be locally noetherian abelian categories, and $F:\cC_1\to\cC_2$ a coproduct preserving left exact $\Z_+$-functor.
Assume that the derived functor
$$\bR F:D^{co}(\cC_1)\to D^{co}(\cC_2)$$ takes the subcategory $D^b_f(\cC_1)$ to $D^b_f(\cC_2).$ Then the induced functor
$$\bR F:D^{co}(\cC_1,W_1)\to D^{co}(\cC_2,W_2)$$ takes the subcategory $D^{abs}(\cC_{1 f},W_1)$ to $D^{abs}(\cC_{2 f},W_2).$\end{prop}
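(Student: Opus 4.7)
The strategy is to reduce the claim to the case of a trivial matrix factorization $\iota(G)$ with $G$ noetherian and $W_1 G = 0$, compute $\bR F$ on these via the $2$-periodic folding of an injective resolution, and then transfer the resulting matrix factorization to one with noetherian components using the hypothesis on $\bR F$.

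First, I would reduce to $F_1 \in MF_{ab}(\cC_{1f}, W_1)$. The absolute derived category $D^{abs}(\cC_{1f}, W_1)$ is generated, as a triangulated category, by its abelian heart $MF_{ab}(\cC_{1f}, W_1)$; the essential image of $D^{abs}(\cC_{2f}, W_2) \hookrightarrow D^{co}(\cC_2, W_2)$ is a thick triangulated subcategory (its Karoubi closure being the full subcategory of compact objects, by Theorem~\ref{th:D^co-comp-gen-MF}). Since $\bR F$ is triangulated, this reduction is valid.

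Second, for such an $F_1 = (F_1^{ev}, F_1^{odd}, \delta_0, \delta_1)$, set $K := \ker \delta_0$ and $C := \coker \delta_0$; both are noetherian and annihilated by $W_1$ (using $\delta_1\delta_0 = W_1 = \delta_0\delta_1$). The sub-matrix-factorization $A := (F_1^{ev}, \im \delta_0)$, with differentials inherited from $F_1$, fits into a short exact sequence
$$0 \to A \to F_1 \to \iota(C)[1] \to 0$$
in $MF_{ab}(\cC_1, W_1)$, where $\iota(G) := (G,0)$ places $G$ in the even component. The further inclusion $\iota(K) \hookrightarrow A$ has contractible cokernel (on which $\delta_0$ becomes an isomorphism), so $\iota(K) \simeq A$ in $K(\cC_1, W_1)$. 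Combining yields an exact triangle $\iota(K) \to F_1 \to \iota(C)[1]$ in $D^{abs}(\cC_1, W_1)$. Applying $\bR F$, it suffices to show $\bR F(\iota(G)) \in D^{abs}(\cC_{2f}, W_2)$ for any noetherian $G \in \cC_{1f}$ with $W_1 G = 0$.

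Third, I would compute $\bR F(\iota(G))$ explicitly via $2$-periodic folding. Fix a bounded-below injective resolution $G \to I^\bullet$ in $\cC_1$. Since $W_1 G = 0$, multiplication by $W_1$ on $I^\bullet$ is null-homotopic; choose a null-homotopy $h$, perturbing so that $h^2 = 0$. Then
$$J(I^\bullet, h) := \Bigl( \bigoplus\nolimits_{n \text{ even}} I^n,\ \bigoplus\nolimits_{n \text{ odd}} I^n,\ d + h \Bigr)$$
is a matrix factorization with injective components (coproducts of injectives remain injective in the locally noetherian $\cC_1$), and the canonical inclusion $\iota(G) \hookrightarrow J(I^\bullet, h)$ is a coderived equivalence by a standard Positselski-type argument. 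Applying $F$ componentwise (it preserves coproducts) yields $\bR F(\iota(G)) \simeq J(F(I^\bullet), F(h))$ in $D^{co}(\cC_2, W_2)$.

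Finally, by hypothesis $F(I^\bullet) \simeq \bR F(G) \in D^b_f(\cC_2)$, so there exist a bounded noetherian complex $C^\bullet \in C^b(\cC_{2f})$ and a quasi-isomorphism $\phi \colon C^\bullet \to F(I^\bullet)$. By a homotopy-lifting argument, the null-homotopy $F(h)$ of $W_2 \cdot \id_{F(I^\bullet)}$ transfers to a square-zero null-homotopy $\tilde h$ of $W_2 \cdot \id_{C^\bullet}$ on $C^\bullet$ with $\phi \tilde h$ chain-homotopic to $F(h) \phi$. Then $J(C^\bullet, \tilde h) \in MF_{ab}(\cC_{2f}, W_2)$, and the induced map $J(C^\bullet, \tilde h) \to J(F(I^\bullet), F(h))$ is an isomorphism in $D^{co}(\cC_2, W_2)$. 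Hence $\bR F(\iota(G)) \in D^{abs}(\cC_{2f}, W_2)$, which completes the proof. The main obstacle is the last homotopy-lifting step: building $\tilde h$ with $\tilde h^2 = 0$ compatibly with $\phi$. This requires a standard successive-approximation argument, exploiting the boundedness of $C^\bullet$ (so that obstructions are filtered by finitely many levels and can be absorbed by termwise perturbation of $\tilde h$).
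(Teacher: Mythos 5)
Your proposal takes a genuinely different route from the paper's, and it has a real gap.

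The paper avoids the ``folding'' construction entirely: it applies $F$ to $D^{co}(MF_{ab}(\cC_1,W_1))$, uses the exact coproduct-preserving even/odd projection functors $\Phi_0,\Phi_1$ to detect membership in $D^b_f(MF_{ab}(\cC_2,W_2))$ componentwise (reducing to the given hypothesis on $F\colon D^{co}(\cC_1)\to D^{co}(\cC_2)$), and then pushes the conclusion down to $D^{co}(\cC_i,W_i)$ via the totalization functor $\Tot^\oplus$ of Appendix~\ref{app:MF}, which manifestly sends $D^b_f(MF_{ab}(\cC_i,W_i))$ into $D^{abs}(\cC_{if},W_i)$. All the homotopical subtlety is absorbed into $\Tot^\oplus$, which is a strict functor at the chain level.

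Your proof instead tries to hand-build an injective-component matrix factorization by folding an ordinary injective resolution $G\to I^\bullet$ in $\cC_1$. The gap is in Step 3: the assertion that the null-homotopy $h$ of $W_1\cdot\id_{I^\bullet}$ can be ``perturbed so that $h^2=0$'' is not justified, and in general it cannot be achieved. Starting from an arbitrary $h$ with $dh+hd=W_1$, the element $h^2$ is a degree $-2$ cycle in $\End(I^\bullet)$, and although its cohomology class vanishes when $G$ is concentrated in a single degree, killing it by modifying $h\mapsto h+z$ (with $z$ a cycle) requires solving the quadratic equation $h^2+hz+zh+z^2=0$, which has no general solution. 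The standard fix is to introduce an infinite tower of higher homotopies $h_1,h_2,\dots$ (a curved-$A_\infty$/CDG-module structure), but you never produce them. This gap is compounded in Step 5: even granting the $h^2=0$ construction, transferring a square-zero null-homotopy across a quasi-isomorphism $C^\bullet\to F(I^\bullet)$ to a square-zero null-homotopy on $C^\bullet$ is precisely the same kind of quadratic obstruction problem, and the ``successive-approximation argument'' you invoke is not spelled out and would in any case need the full homotopy-transfer machinery. Your Steps 1 and 2 (the reduction to trivial factorizations $\iota(K)$, $\iota(C)$ via $\ker\delta_0$ and $\coker\delta_0$) are correct and are a nice observation, but the construction underlying Steps 3--5 needs to be replaced. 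Either carry out the folding with the full sequence of higher homotopies and prove a genuine homotopy-transfer statement, or, more economically, follow the paper's route and take an injective resolution of $\iota(G)$ inside the abelian category $MF_{ab}(\cC_1,W_1)$ itself (whose injectives are sums $\Psi_0(J)\oplus\Psi_1(J')$), and totalize --- this gives a strict matrix factorization with injective components with no $h^2=0$ issue to contend with.
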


\begin{proof}We have an induced functor $F_{MF}:MF_{ab}(\cC_1,W_1)\to MF_{ab}(\cC_2,W_2).$ It is left exact and coproduct preserving. It has a derived functor $$\bR F_{MF}:D^{co}(MF_{ab}(\cC_1,W_1))\to D^{co}(MF_{ab}(\cC_2,W_2)).$$ We claim that this functor takes $D^b_f(MF_{ab}(\cC_1,W_1))$ to
$D^b_f(MF_{ab}(\cC_2,W_2)).$ Indeed, first we may replace $D^{co}$ by $D^{+}.$ Further, an object $G\in D^{+}(MF_{ab}(\cC_2,W_2))$ belongs to
$D^b_f(MF_{ab}(\cC_2,W_2))$ iff $\Phi_0(G)$ and $\Phi_1(G)$ belong to $D^b_f(\cC_2).$ It remains to note that we have commutative diagrams for $i=0,1:$
\begin{equation}\label{eq:commutation_with_Phi_i}
\begin{CD}
D^+(MF_{ab}(\cC_1,W_1)) @>\bR F_{MF} >> D^+(MF_{ab}(\cC_1,W_1))\\
@V \Phi_i VV @V \Phi_i VV\\
D^+(\cC_1) @> \bR F >>   D^+(\cC_2).
\end{CD}
\end{equation}

Now we apply the totalization. Namely, we have a commutative diagram
$$
\begin{CD}
D^{co}(MF_{ab}(\cC_1,W_1)) @>\bR F_{MF} >> D^{co}(MF_{ab}(\cC_1,W_1))\\
@V \Tot^{\oplus} VV @V \Tot^{\oplus} VV\\
D^{co}(\cC_1,W_1) @> \bR F >>   D^{co}(\cC_2,W_2).
\end{CD}
$$
The functors $\Tot^{\oplus}$ take $D^b_f(MF_{ab}(\cC_i,W_i))$ to $D^{abs}(\cC_i,W_i).$ This proves the proposition.\end{proof}



For a separated noetherian $\mk$-scheme $X$ and $W\in\cO(X),$ we have a bifunctor
$$\un{\Hom}_{ab}:MF_{ab}(\Coh X,W)^{op}\times MF_{ab}(\QCoh X,W)\to \Mod^{\Z/2}\mhyphen \mk:=MF_{ab}(\Mod\mhyphen \mk,0),$$
which is left exact in both arguments. We also have its derived functor
$$\bR\un{\Hom}_{ab}:D^b(MF_{ab}(\Coh X,W))^{op}\times D^+(MF_{ab}(\QCoh X,W))\to D^+(\Mod^{\Z/2}\mhyphen \mk).$$
We have a natural isomorphism
\begin{equation}\label{eq:Hom_MF_total}
\Tot^{\oplus}(\bR\un{\Hom}_{ab}(\cF,\cG))\cong\bR\Hom(\Tot(\cF),\Tot^{\oplus}(\cG)),\end{equation} 
for $\cF\in D^b(MF_{ab}(\Coh X,W)),$ $\cG\in D^+(MF_{ab}(\QCoh X,W)).$

Also, note that the bifunctor $$-\otimes-:\Mod^{\Z/2}\mhyphen \mk\times \Mod^{\Z/2}\mhyphen \mk\to \Mod^{\Z/2}\mhyphen \mk$$ is biexact, hence it induces a tensor product functor on $D^+(\Mod^{\Z/2}\mhyphen \mk).$

\begin{prop}\label{prop:box_product_Morita}
Given two schemes $X_1$ and $X_2$ of finite type over a perfect field, and regular functions $W_i$ on $X_i,$ the box product quasi-functor provides a Morita equivalence
\begin{equation}\label{eq:boxtimes_functor}-\boxtimes-:\mD^{abs}(X_1,W_1)\otimes \mD^{abs}(X_2,W_2)\to \mD^{abs}_{X_1^0\times X_2^0}(X_1\times X_2,W_1\boxplus W_2).\end{equation}\end{prop}

\begin{proof}We first show that the functor \eqref{eq:boxtimes_functor} is quasi-fully-faithful. By \cite[Proposition 6.20 a)]{L}, for any $F_1,G_1\in D^b_{coh}(X_1)$ and $F_2,G_2\in D^b_{coh}(X_2),$ we have \begin{equation}\label{eq:Lunts_Kunneth}\bR\Hom(F_1,G_1)\otimes\bR\Hom(F_2,G_2)\cong\bR\Hom(F_1\boxtimes F_2,G_1\boxtimes G_2)\text{ in }D^+(k).\end{equation} Now let us take any $\cF_1,\cG_1\in MF_{ab}(\Coh X_1,W_1),$ $\cF_2,\cG_2\in MF_{ab}(\Coh X_2,W_2).$ It follows from \eqref{eq:Lunts_Kunneth} and \eqref{eq:commutation_with_Phi_i} that we have
$$\bR\un{\Hom}_{ab}(\cF_1,\cG_1)\otimes \bR\un{\Hom}_{ab}(\cF_2,\cG_2)\cong \bR\un{\Hom}_{ab}(\cF_1\boxtimes \cF_2,\cG_1\boxtimes \cG_2)\text{ in }D^+(\Mod^{\Z/2}\mhyphen \mk).$$ Applying $\Tot^{\oplus},$ we obtain an isomorphism
$$\bR\Hom(\bar{\cF_1},\bar{\cG_1})\otimes\bR\Hom(\bar{\cF_2},\bar{\cG_2})\cong \bR\Hom(\bar{\cF_1}\boxtimes\bar{\cF_2},\bar{\cG_1}\boxtimes\bar{\cG_2}).$$
Therefore, the DG quasi-functor \eqref{eq:boxtimes_functor} is quasi-fully-faithful.

It remains to show that the objects $\cF_1\boxtimes\cF_2$ generate $D^{abs}_{coh,X_1^0\times X_2^0}(X_1\times X_2,W_1\boxplus W_2)$ as triangulated category. By \cite[Proposition 6.8, Proposition 6.14]{L}, the image of the bifunctor $D^b_{coh}(X_1^0)\times D^b_{coh}(X_2^0)\to D^b_{coh}(X_1^0\times X_2^0)$ generates the target as a triangulated category. To finish the proof, it suffices to note that the image of the pushforward functor $D^b_{coh}(X_1^0\times X_2^0)\to D^{abs}_{coh,X_1^0\times X_2^0}(X_1\times X_2,W_1\boxplus W_2)$ also generates the target as a triangulated category. 
\end{proof}

We finish this section with the following general statement on localization for categories of matrix factorizations

\begin{prop}\label{prop:Serre_quotient_MF}Let $(\cA,W)$ be an abelian $\Z_+$-category, and $\cB\subset\cA$ a Serre subcategory. Put $\cC:=\cA/\cB.$ Then $W$ induces a natural transformation $\bar{W}:\id_{\cC}\Rightarrow\id_{\cC},$ and the induced functor $D^{abs}(\cA,W)\to D^{abs}(\cC,\bar{W})$ is a localization. Its kernel is generated by the image of $D^{abs}(\cB,W).$\end{prop}

\begin{proof}The first assertion (about $\bar{W}$) is evident. It is easy to see that $MF_{ab}(\cB,W)\subset MF_{ab}(\cA,W)$ is a Serre subcategory. Let us check that the natural functor  $\pr:MF_{ab}(\cA,W)/MF_{ab}(\cB,W)\to MF_{ab}(\cC,\bar{W})$ is an equivalence of abelian categories.

{\it Faithfulness.} Suppose that $\pr(\bar{\varphi})=0$ for some morphism $\bar{\varphi}:\bar{F}\to\bar{F'}$ in $MF_{ab}(\cA,W)/MF_{ab}(\cB,W).$ We may and will assume that $\bar{\varphi}$ is the image of a morphism $\varphi:F\to F'$ in $MF_{ab}(\cA,W).$ Then $\im(\varphi^{ev}), \im(\varphi^{odd})\in\cB,$ hence $\im(\varphi)\in MF_{ab}(\cB,W).$ Thus, $\bar{\varphi}=0.$

{\it Fullness.} Suppose that we have a morphism $\psi:\pr(\bar{F})\to \pr(\bar{G})$ in $MF_{ab}(\cC,\bar{W}),$ where $F,G\in MF_{ab}(\cA,W).$ Then the morphisms $\psi^{ev},\psi^{odd}$ can be represented by some morphisms $(\varphi^{ev})':(F^{ev})'\to G^{ev}/(G^{ev})',$ $(\varphi^{odd})':(F^{odd})'\to G^{odd}/(G^{odd})',$ where $(F^{ev})'\subset F^{ev}$ (resp. $(G^{ev})'\subset G^{ev}$) is a subobject s.t. $F^{ev}/(F^{ev})'$ (resp. $(G^{ev})'$) is in $\cB,$ and similarly for $(F^{odd})'$ (resp. $(G^{odd})'$). Let us replace $(F^{ev})'$ by the intersection $(F^{ev})'\cap (\delta_F^{0})^{-1}((F^{odd})')=:(F^{ev})''.$ Then the quotient $F^{ev}/(F^{ev})''$ is still in $\cB$ (since $\cB$ is a Serre subcategory), and $\delta^{0}((F^{ev})'')\subseteq (F^{odd})'.$ Further, we replace $(F^{odd})'$ by the intersection $(F^{odd})'\cap (\delta_F^{1})^{-1}((F^{ev})'').$ Again, the quotient  $F^{odd}/(F^{odd})''$ is still in $\cB,$ and $\delta^{1}((F^{odd})'')\subseteq (F^{ev})''.$ But we also have the inclusion $\delta^0((F^{ev})'')\subseteq (F^{odd})'',$ since $\delta^1\delta^0((F^{ev})'')=W_{F^{ev}}((F^{ev})'')\subseteq (F^{ev})''.$ Therefore, $((F^{ev})'',(F^{odd})'')\subset F$ is a subfactorization. Analogously, we construct subobjects $(G^{ev})''\subseteq G^{ev},$ $(G^{odd})''\subseteq G^{odd},$ containing $(G^{ev})'$ (resp. $(G^{odd})'$) such that $(G^{ev})'',(G^{odd})''\in\cB,$ and $((G^{ev})'',(G^{odd})'')\subset G$ is a subfactorization. We denote by $(\varphi^{ev})'':(F^{ev})''\to G^{ev}/(G^{ev})''$ (resp. $(\varphi^{odd})'':(F^{odd})''\to G^{odd}/(G^{odd})''$) the morphism induced by $(\varphi^{ev})'$ (resp. $(\varphi^{odd})'$).

We denote by $\varphi'':F''\to G/G''$ the constructed morphism of $\Z/2$-graded objects in $\cA.$ By construction, the commutator $[\delta,\varphi'']$ vanishes in $\cC.$ Therefore, $\im([\delta,\varphi''])$ is a ($\Z/2$-graded) object of $\cB.$ Moreover, $\im([\delta,\varphi''])\subseteq G/G''$ is a subfactorization (since $[\delta,\varphi'']:F''[-1]\to G/G''$ is a morphism in $MF_{ab}(\cA,W)$). Therefore, we have an actual morphism $\varphi:F''\to \coker([\delta,\varphi''])$ in $MF(\cA,W),$ and $\pr(\bar{\varphi})=\psi.$ This proves fullness of $\pr.$

{\it Essential surjectivity.} Let us take some matrix factorization $\bar{F}=(\overline{F^{ev}},\overline{F^{odd}},\bar{\delta^0},\bar{\delta^1})$ in $MF(\cC,\bar{W}).$ We may and will assume that $\bar{\delta^1}$ is represented by an actual morphism $F^{odd}\xto{\delta^1} F^{ev}$ in $\cA.$ The morphism $(\bar{\delta^0})$ is represented by some $(\delta^0)':(F^{ev})'\to F^{odd}/(F^{odd})',$ where $F^{ev}/(F^{ev})',(F^{odd})'\in\cB.$ We can replace $F^{odd}$ by $F^{odd}/(F^{odd})',$ and $F^{ev}$ by $F^{ev}/\delta^1((F^{odd})'),$ and $(F^{ev})'$ by $(F^{ev})'/(F^{ev})'\cap\delta^1((F^{odd})')).$ Thus, we may and will assume that $(F^{odd})'=0.$ Similarly, we may and will assume that $(F^{ev})'=F^{ev}.$ So we have a data $(F^{ev},F^{odd},\delta^0,\delta^1)$ such that $\im(\delta^1\delta^0-W_{F^{ev}}),\im(\delta^0\delta^1-W_{F^{odd}})$ are in $\cB.$ Replacing $F$ by $\tilde{F}:=F/\im((\delta)^2-W_F),$ we obtain an object of $MF(\cA,W),$ and $\pr(\tilde{F})=\bar{F}.$ 

It follows that we have an equivalence of triangulated categories
$$D^b(MF_{ab}(\cA,W))/D^b_{MF_{ab}(\cB,W)}(MF_{ab}(\cA,W))\xto{\sim}D^b(MF_{ab}(\cC,\bar{W})).$$
In the commutative diagram
$$
\begin{CD}
D^b(MF_{ab}(\cA,W)) @>>> D^b(MF_{ab}(\cC,\bar{W}))\\
@V\Tot_{\cA}VV @V\Tot_{\cC}VV\\
D^{abs}(\cA,W) @>>> D^{abs}(\cC,\bar{W})
\end{CD}
$$
both vertical arrows and the upper horizontal arrow are localizations, hence so is the lower horizontal arrow. Its kernel is generated by $\Tot_{\cA}(D^b_{MF_{ab}(\cB,W)}(MF_{ab}(\cA,W)))=\im(D^{abs}(\cB,W)\to D^{abs}(\cA,W)).$ This proves the proposition. 
\end{proof}

\section{Grothendieck duality}
\label{app:duality}

Here we prove relative Grothendieck duality for coherent sheaves and matrix factorizations on nice ringed spaces.

First we need one general auxiliary result.

\begin{prop}\label{prop:D^+toD^+}Let $\cC_1,$ $\cC_2$ be locally noetherian abelian categories, and $\Phi:\cC_1\to\cC_2$ a coproduct preserving left exact functor. Suppose that its right derived functor $\bR^n \Phi$ vanishes for some $n>0$ (hence also $R^m \Phi=0$ for $m>n$).

Take the derived functor $\Phi:D^{co}(\cC_1)\to D^{co}(\cC_2).$ Then its right adjoint functor
$\Phi^!:D^{co}(\cC_2)\to D^{co}(\cC_1)$ takes $D^{+}(\cC_2)$ to $D^+(\cC_1).$\end{prop}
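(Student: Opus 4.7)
The plan is to exploit the bounded cohomological dimension of $\Phi$ together with Krause's identification $D^{co}(\cC_i)\cong K(\Inj\cC_i)$ built into Theorem \ref{th:D^co-comp-gen-ab}, and the compact generation by Noetherian objects provided by the same theorem. Fix $F\in D^+(\cC_2)$, say $F\in D^{\geq m}(\cC_2)$. For any Noetherian $N\in(\cC_1)_f$, viewed in degree zero, the derived image $\Phi(N)=\bR\Phi(N)$ lies in $D^{[0,n-1]}(\cC_2)\subset D^+(\cC_2)$, since $\bR^i\Phi=0$ for $i\geq n$. Using the full faithfulness of the embedding $D^+(\cC_2)\hookrightarrow D^{co}(\cC_2)$ (Theorem \ref{th:D^co-comp-gen-ab}, 2) and the standard $t$-structure vanishing in $D^+$, I obtain, by the $(\Phi,\Phi^!)$-adjunction, that for every $k<m-n+1$ and every $N\in(\cC_1)_f$
$$\Hom_{D^{co}(\cC_1)}(N,\Phi^!(F)[k])=\Hom_{D^{co}(\cC_2)}(\Phi(N),F[k])=\Hom_{D^+(\cC_2)}(\Phi(N),F[k])=0.$$

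Next, I represent $\Phi^!(F)\in D^{co}(\cC_1)$ by a complex of injectives $I^\bullet\in K(\Inj\cC_1)$ via Krause's equivalence, and reinterpret the above as the vanishing of $\Hom_{K(\cC_1)}(N,I^\bullet[k])=\cH^k\Hom_{\cC_1}(N,I^\bullet)$. By a direct computation this cohomology group equals $\Hom_{\cC_1}(N,Z^k)/d^{k-1}\Hom_{\cC_1}(N,I^{k-1})$, where $Z^k=\ker d^k$ and $B^k=\operatorname{im} d^{k-1}$. I claim this forces $\cH^k(I^\bullet)=Z^k/B^k=0$ for $k<m-n+1$. Indeed, assume the contrary; since $\cC_1$ is locally Noetherian, $\cH^k(I^\bullet)$ contains a nonzero Noetherian subobject $N'$. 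Lifting a finite generating set of $N'$ from $Z^k/B^k$ to $Z^k$ and taking the subobject they generate yields a Noetherian $\tilde N\subset Z^k$ whose image in $\cH^k(I^\bullet)$ equals $N'$. Applying the vanishing above to $\tilde N$, the inclusion $\tilde N\hookrightarrow Z^k$ must factor as $d^{k-1}\circ h$ for some $h\colon\tilde N\to I^{k-1}$; its image therefore lies in $B^k$, so the image in $Z^k/B^k$ vanishes, contradicting the choice of $\tilde N$.

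Finally, with $\cH^k(I^\bullet)=0$ for all $k<m-n+1$, the good truncation map $I^\bullet\to\tau^{\geq m-n+1}I^\bullet$ is a quasi-isomorphism. Choosing any bounded-below injective resolution $J^\bullet\in K^+(\Inj\cC_1)$ of the target, the composite $I^\bullet\to J^\bullet$ in $K(\cC_1)$ is a quasi-isomorphism between complexes of injectives, so its cone is an acyclic complex of injectives. In the locally Noetherian setting such a cone is contractible: every complex of injectives is K-injective, so $\Hom_{K(\cC_1)}$ out of an acyclic complex of injectives equals the corresponding Hom in $D(\cC_1)$, which vanishes against the cone itself, forcing its identity to be null-homotopic. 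Hence $I^\bullet\cong J^\bullet$ in $K(\Inj\cC_1)\cong D^{co}(\cC_1)$, exhibiting $\Phi^!(F)$ as an object of $D^+(\cC_1)$.

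The hard part will be the middle paragraph: the passage from derived-categorical Hom-vanishing to the vanishing of the honest cohomology sheaves $\cH^k(I^\bullet)$. A priori, not every morphism from a Noetherian object into $Z^k/B^k$ lifts to $Z^k$, so Noetherian subobjects of $\cH^k(I^\bullet)$ cannot be tested directly against the derived Hom. The lifting trick via a finite generating set uses essentially that Noetherian objects in a locally Noetherian category are precisely the finitely generated ones, and this is what closes the argument.
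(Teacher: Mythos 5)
Your first two paragraphs parallel the paper: the adjunction step is identical, and your middle paragraph — showing that the $\Hom$-vanishing against Noetherian objects forces $\cH^k(I^\bullet)=0$ for $k\leq m-n$ — is correct; the lifting trick via finite generating sets works, and you needn't have worried about it. The genuine gap is in the final paragraph, not the middle one.

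The assertion that in a locally Noetherian abelian category ``every complex of injectives is K-injective'' is false; its failure is precisely the reason $D^{co}(\cC)$ differs from $D(\cC)$, which is the setting of the whole proposition. Over $R=k[x]/(x^2)$ the module $R$ is injective, and the two-periodic complex $\cdots\xrightarrow{x}R\xrightarrow{x}R\xrightarrow{x}\cdots$ is an acyclic complex of injectives that is not contractible. So your cone $C=\Cone(I^\bullet\to J^\bullet)$ is acyclic with injective terms, but that alone does not make it vanish in $K(\Inj\cC_1)\cong D^{co}(\cC_1)$. More fundamentally, the weaker information $\cH^k(I^\bullet)=0$ for $k\leq m-n$ cannot by itself place $I^\bullet$ in the image of $D^+(\cC_1)$: a bounded-below $J^\bullet$ direct-summed with any nonzero acyclic complex of injectives has the same cohomology in every degree yet is not in the image of $D^+(\cC_1)$ (a direct summand of a K-injective complex is K-injective, so the acyclic summand would have to be contractible). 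In passing from the $\Hom$-vanishing to bare cohomology vanishing you throw away exactly the information needed to exclude such a summand. The paper instead states as a criterion (without proof) that $F\in D^{co}(\cC)$ lies in $D^{\geq d}(\cC)$ iff $\Hom^i(G,F)=0$ for all $G\in\cC_f$ and $i\leq d-1$, and applies the adjunction to it directly; your argument proves only the implication from that $\Hom$-vanishing to low-degree cohomology vanishing, and the final inference does not follow. Repairing it would require showing $\Hom_{D^{co}}(N,C[k])=0$ for all Noetherian $N$ and \emph{all} $k\in\Z$, which your estimates on $\Hom(N,I^\bullet[k])$ (available only for $k\leq m-n$) do not give.
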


\begin{proof}First, if $\cC$ is a locally noetherian abelian category, then an object $F\in D^{co}(\cC)$ belongs to $D^{\geq d}(\cC)$ iff for any
$G\in\cC_f$ one has $\Hom^i(G,F)=0$ for $i\leq d-1.$

Now, suppose that $E\in D^{\geq l}(\cC_2).$ For any $F$ in $\cC_1$ we have (by assumption) $\bR \Phi(F)\in D^{b,\leq n}(\cC_2).$ Hence, for $i\leq l-n-1$ we have $$\Hom^i(F,\Phi^! E)=\Hom^i(\bR\Phi(F),E)=0.$$ Thus, $\Phi^!F\in D^{\geq (l-n)}(\cC_1).$ This proves Proposition.\end{proof}

Let $(X,\cA_X)$ be a nice ringed space. Suppose that $\cB_X$ is another coherent sheaf of $\cO_X$-algebras, so that $(X,\cB_X)$ is a nice ringed space. We have a natural bi-functor
$$\bR\cHom_{\cA_X}:D^b(\Coh\mhyphen \cA_X\tens{\cO_X}\cB_X^{op})\times D^{co}(\QCoh\mhyphen \cA_X)\to D^{co}(\QCoh\mhyphen \cB_X).$$

The proof of the following theorem follows the lines of \cite[Section 6]{N}.

\begin{theo}\label{th:Groth_dual_coh}Let $(f,\phi):(X,\cA_X)\to (Y,\cA_Y),$ $(f,\psi):(X,\cB_X)\to (Y,\cB_Y)$ be two morphisms of nice ringed spaces with the same underlying proper morphism $f:X\to Y.$ We have also the morphism $(f,\phi\otimes\psi^{op}):(X,\cA_X\tens{\cO_X}\cB_X^{op})\to (Y,\cA_Y\tens{\cO_Y}\cB_Y^{op}).$ Then for $\cF\in D^b(\Coh\mhyphen \cA_X\tens{\cO_X}\cB_X^{op}),$ $\cG\in D^{co}(\QCoh\mhyphen \cA_Y)$ we have a bi-functorial isomorphism
$$f_*\bR\cHom_{\cA_X}(\cF,f^!\cG)\stackrel{\sim}{\to} \bR\cHom_{\cA_Y}(f_*\cF,\cG)$$
in $D^{co}(\QCoh\mhyphen \cB_Y).$\end{theo}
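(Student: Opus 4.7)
The plan is to follow Neeman's strategy: construct the natural morphism from the counit of an adjunction $f_*\dashv f^!$ obtained via Brown representability, and then verify the isomorphism by testing against compact generators and running through a chain of standard adjunctions that ultimately reduces to the projection formula.

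First I would establish the existence of $f^!$. Since $f:X\to Y$ is proper, Proposition \ref{prop:f_*_ringed} shows that $f_*:\D^{co}(\QCoh\text{-}\cA_X)\to\D^{co}(\QCoh\text{-}\cA_Y)$ takes the subcategory of compact objects $\D^b_{coh}(\cA_X)$ into $\D^b_{coh}(\cA_Y)$. Combined with the fact that $f_*$ has finite cohomological amplitude (so it preserves small coproducts on coderived categories), and with the compact generation of $\D^{co}(\QCoh\text{-}\cA_X)$ provided by Theorem \ref{th:D^co-comp-gen-ab} together with Proposition \ref{prop:enough_inj}, Brown representability yields a right adjoint $f^!:\D^{co}(\QCoh\text{-}\cA_Y)\to \D^{co}(\QCoh\text{-}\cA_X)$. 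Since $f_*$ preserves compacts, $f^!$ preserves small coproducts. The natural morphism of the theorem is then defined as the composition
\[
f_*\cHom_{\cA_X}(\cF,f^!\cG)\longrightarrow \cHom_{\cA_Y}(f_*\cF,f_*f^!\cG)\xrightarrow{\varepsilon}\cHom_{\cA_Y}(f_*\cF,\cG),
\]
where the first arrow is the sheafified form of the adjunction $f^*\dashv f_*$ applied to internal Hom, and $\varepsilon$ is the counit of $f_*\dashv f^!$.

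To prove this map is an isomorphism in $\D^{co}(\QCoh\text{-}\cB_Y)$, I would use that $\QCoh\text{-}\cB_Y$ is locally Noetherian (Proposition \ref{prop:enough_inj}), so by Theorem \ref{th:D^co-comp-gen-ab} the category $\D^{co}(\QCoh\text{-}\cB_Y)$ is compactly generated by $\D^b_f(\cB_Y)$. It therefore suffices to show that for every $\cP\in\D^b_f(\cB_Y)$ the induced map of $\bR\Hom_{\cB_Y}(\cP,-)$ is an isomorphism in $D(k)$. On the left-hand side, successive application of $f^*\dashv f_*$ for $\cB$-modules and of tensor–Hom adjunction yields
\[
\bR\Hom_{\cB_Y}(\cP,f_*\cHom_{\cA_X}(\cF,f^!\cG))\cong \bR\Hom_{\cA_X}(f^*\cP\otimes^{\bL}_{\cB_X}\cF,\,f^!\cG),
\]
and the adjunction $f_*\dashv f^!$ rewrites this as $\bR\Hom_{\cA_Y}(f_*(f^*\cP\otimes^{\bL}_{\cB_X}\cF),\,\cG)$. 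On the right-hand side, a single tensor–Hom identity gives
\[
\bR\Hom_{\cB_Y}(\cP,\cHom_{\cA_Y}(f_*\cF,\cG))\cong \bR\Hom_{\cA_Y}(\cP\otimes^{\bL}_{\cB_Y}f_*\cF,\,\cG).
\]
Matching the two thus reduces to the projection formula
\[
f_*(f^*\cP\otimes^{\bL}_{\cB_X}\cF)\;\cong\;\cP\otimes^{\bL}_{\cB_Y}f_*\cF.
\]

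The main obstacle is this projection formula for nice ringed spaces. I would prove it by fixing $\cF$ and observing that both sides are triangulated functors in $\cP$ commuting with small coproducts (using that $f_*$ does so, as above, and that $f^*$ and both tensor products trivially do). The statement is immediate for $\cP=\cB_Y$, and one reduces the general case to this by writing an arbitrary $\cP\in \D^{co}(\QCoh\text{-}\cB_Y)$ as a homotopy colimit over coherent $\cB_Y$-submodules and exploiting the compact generation; for $\cP\in\D^b_f(\cB_Y)$ one argues directly, filtering by the nilpotent ideal in $\cB_Y$ so as to reduce to quasi-coherent $\cO_Y$-modules, where classical base change/projection formula arguments (for the underlying proper map $f:X\to Y$) apply. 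Once the projection formula is in hand, Yoneda on compact generators of $\D^{co}(\QCoh\text{-}\cB_Y)$ concludes the proof, and the bi-functoriality is automatic from the construction.
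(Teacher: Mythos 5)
Your approach is a genuine departure from the paper's. The paper reduces to $\cG\in D^+(\QCoh\text{-}\cA_Y)$, uses Proposition \ref{prop:D^+toD^+} to conclude that every object in the comparison lies in $D^+(\QCoh\text{-}\cB_Y)$, and then checks the map after applying $\bR\Gamma(U,-)$ for every open $U\subseteq Y$. For $U=Y$ this is literally the adjunction $f_*\dashv f^!$; general $U$ is handled by showing that $f^!$ commutes with restriction to opens, which is proved by testing on the compact generators $\Cone(\cG\stackrel{\gamma}{\to}\cG)$ of $D^{co}_{Y\setminus U}$. You instead Yoneda-test against compact generators $\cP\in D^b_f(\cB_Y)$ of $D^{co}(\QCoh\text{-}\cB_Y)$ and run a chain of adjunctions to reduce to a projection formula.

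There is a gap in your route. The chain $\bR\Hom_{\cB_Y}(\cP,f_*(-))\cong\bR\Hom_{\cB_X}(\bL f^*\cP,-)$ and the subsequent derived tensor--Hom step invoke $\bL f^*$ and $\otimes^{\bL}$ at the level of coderived categories, and these are not available for free. The coderived category is built from injective resolutions, so right-derived functors and right adjoints ($f_*$, $f^!$, $\cHom$) are the natural ones; left adjoints such as $\bL f^*$ and left-derived $\otimes^{\bL}$ depend on flat resolutions, which do not interact with $D^{co}$ the way they do with the ordinary derived category. Concretely: (a) $f_*$ on $D^{co}$ has a left adjoint abstractly by dual Brown representability, but identifying it with the derived pullback $\bL f^*$ is nontrivial and can fail unless $\cB_X$ is flat over $f^{-1}\cB_Y$; (b) for a general compact $\cP\in D^b_f(\cB_Y)$, $\cP\otimes^{\bL}_{\cB_Y}(-)$ need not descend to the coderived category, since $\cP$ typically has infinite Tor-amplitude on a singular $Y$ and a left tensor functor need not send coacyclic complexes to coacyclic complexes; (c) as a result, the projection formula you reduce to is not a known statement in the coderived framework, and the proposed reduction by filtering along the nilpotent ideal does not save it, because the subquotients inherit the same Tor-amplitude problems and coacyclicity is not preserved. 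The paper's proof is designed precisely to avoid $f^*$ and $\otimes^{\bL}$: it uses only $f_*$, $f^!$, $\cHom$, open restriction, and global sections, each of which is well-behaved on $D^{co}$. To repair your argument you would need, at minimum, a development of a monoidal (tensor) structure on the relevant coderived categories and a derived pullback compatible with it; this is substantial extra work and is not supplied by the references used in the paper.
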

\begin{proof}First, the required morphism of bi-functors is defined to be the composition
\begin{equation}\label{eq:appendixf_*f^!}f_*\bR\cHom_{\cA_X}(\cF,f^!\cG)\to \bR\cHom_{\cA_Y}(f_*\cF,f_*f^!\cG)\to \bR\cHom_{\cA_Y}(f_*\cF,\cG).\end{equation}
Since all the bifunctors in \eqref{eq:appendixf_*f^!} are continuous in $\cG,$ we may assume that $\cG$ is a compact object in $D^{co}(\QCoh\mhyphen \cA_Y),$
i.e. $\cG\in D^b(\Coh\mhyphen \cA_Y).$

We will assume that more generally $\cG\in D^+(\QCoh\mhyphen \cA_Y)\subset D^{co}(\QCoh\mhyphen \cA_Y).$ Then by Proposition \ref{prop:D^+toD^+} all the objects in \eqref{eq:appendixf_*f^!} are in $D^+(\QCoh\mhyphen \cB_Y)\subset D^{co}(\QCoh\mhyphen \cB_Y).$ Hence, it suffices to show that \eqref{eq:appendixf_*f^!} becomes an isomorphism after applying $\bR\Gamma(U,-)$ for any open $U\subset Y.$ Clearly, this holds for $U=Y$ (since $\bR\Gamma(\bR\cHom)\cong\bR\Hom$).

Let us take some open $U\subset Y,$ and denote by $j_U:U\to Y$ the corresponding open immersion, and similarly $j_{f^{-1}(U)}:f^{-1}(U)\to X.$ Also, we put $f^\prime:=f_{\mid f^{-1}(U)}:f^{-1}(U)\to U.$ It follows from Proposition \ref{prop:enough_inj} 3) that we have a natural isomorphism
$$f^\prime_*j_{f^{-1}(U)}^*\cong j_U^*f_*:D^{co}(\QCoh\mhyphen \cA_X)\to D^{co}(\QCoh\mhyphen \cA_U).$$ Passing to the right adjoints, we get an isomorphism
\begin{equation}\label{eq:app_commuting}j_{f^{-1}(U)*}(f^\prime)^!\cong f^!j_{U*}:D^{co}(\QCoh\mhyphen \cA_U)\to D^{co}(\QCoh\mhyphen \cA_X).\end{equation}
Now, we have a morphism of functors
\begin{equation}\label{eq:restriction_f^!}j_{f^{-1}(U)}^*f^!\to (f^\prime)^!j_U^*,\end{equation} which is a composition
$$j_{f^{-1}(U)}^*f^!\to j_{f^{-1}(U)}^*f^!j_{U*}j_U^*\cong j_{f^{-1}(U)}^*j_{f^{-1}(U)*}(f^\prime)^!j_U^*\cong (f^\prime)^!j_U^*.$$

We are left to show that \eqref{eq:restriction_f^!} is an isomorphism. This in turn equivalent to the following statement.

{\noindent{\bf Claim.}} {\it We have the following vanishing: $(j_{f^{-1}(U)}^*f^!)_{\mid D^{co}_{Y\setminus U}(\QCoh\mhyphen \cA_Y)}=0.$}

\begin{proof} This vanishing is local on $Y$ for the following reason. Let $Y=V_1\cup V_2$ be an open cover. Then for any $\cH\in D^{+}_{Y\setminus U}(\QCoh\mhyphen \cA_Y)$ we have the Mayer-Vietoris triangle
$$\cH\to j_{V_1 *}j_{V_1}^*\cH\to j_{V_2 *}j_{V_2}^*\cH\to j_{V_1\cap V_2 *}j_{V_1\cap V_2}^*\cH\to\cH[1].$$
Combining this with the isomorphism \eqref{eq:app_commuting} we reduce the problem to the case when $Y$ is affine.

Put $Z:=Y\setminus U.$ Clearly, if $Z=Z_1\cap\dots\cap Z_n,$ then it suffices to prove Claim for each $Z_i.$ Hence, we may and will assume that $Y$ is affine and $Z\subset Y$ is a divisor defined by the function $\gamma$ on $Y.$
In this case the category $D^{co}_{Y\setminus U}(\QCoh\mhyphen \cA_Y)$ is compactly generated by the objects $Cone(\cG\stackrel{\gamma}{\to}\cG),$ where $\cG\in\Coh\mhyphen \cA_Y.$ But we have
$$j_{f^{-1}(U)}^*f^!Cone(\cG\stackrel{\gamma}{\to}\cG)=j_{f^{-1}(U)}^*Cone(f^!\cG\stackrel{f^*(\gamma)}{\to}f^!\cG)=0.$$
This proves Claim.
\end{proof}
Claim implies that \eqref{eq:restriction_f^!} is an isomorphism. Finally, we have
\begin{multline*}\bR\Gamma(U,f_*\bR\cHom_{\cA_X}(\cF,f^!\cG))\cong \bR\Hom_{\cA_{f^{-1}(U)}}(j_{f^{-1}(U)}^*\cF,j_{f^{-1}(U)}^*f^!\cG)\\ \cong \bR\Hom_{\cA_{f^{-1}(U)}}(j_{f^{-1}(U)}^*\cF,f{'!}j_{U}^*\cG)\cong \bR\Hom_{\cA_U}(f'_*j_{f^{-1}(U)}^*\cF,j_{U}^*\cG)\\ 
\cong \bR\Hom_{\cA_U}(j_{U}^*f_*\cF,j_{U}^*\cG)\cong \bR\Gamma(U,\bR\cHom_{\cA_X}(f_*\cF,\cG)).\end{multline*}
This proves the theorem.
\end{proof}

\begin{remark}In the proof of Theorem \ref{th:Groth_dual_coh} we use implicitly that for a nice ringed space $(S,\cA_S)$ the natural functor
$D^{co}(\QCoh\mhyphen \cA_S)\to D^{co}(\Mod\mhyphen \cA_S)$ is fully faithful. For example, if $\cE\in D^+(\QCoh\mhyphen \cA_Y\tens{\cO_Y}\cB_Y^{op}),$ $\cH\in D^{co}(\QCoh\mhyphen \cA_Y)$ then $$\bR\cHom(\cE,\cH)\in D^{co}(\Mod\mhyphen \cB_Y),$$ but for $\cE\in D^b_{coh}(\QCoh\mhyphen \cA_Y\tens{\cO_Y}\cB_Y^{op})$ we have
that $\bR\cHom(\cE,\cH)$ is contained in the essential image of $D^{co}(\QCoh\mhyphen \cB_Y).$\end{remark}

\begin{cor}\label{cor:projection_formula_coh}Let $f:(X,\cA_X)\to (Y,\cA_Y)$ be a proper morphism of nice ringed spaces. Then there is a natural isomorphism of functors
$$f_*f^!\cong \bR\cHom_{\cA_Y}(\bR f_*\cA_X,-):D^{co}(\QCoh\mhyphen \cA_Y)\to D^{co}(\QCoh\mhyphen \cA_Y).$$
Here $\cA_X$ is considered as a coherent $(\cA_X\tens{\cO_X}\cA_X^{op})$-module.\end{cor}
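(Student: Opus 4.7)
The plan is to derive this as a direct specialization of Theorem \ref{th:Groth_dual_coh}. Take $\cB_X := \cA_X$, $\cB_Y := \cA_Y$, and $\psi := \phi$, so that we are applying the theorem to the same morphism $(f,\phi)$ used twice. As the ``test object'' we take $\cF := \cA_X$, the diagonal bimodule. First I would check that $\cA_X$ genuinely lies in $D^b(\Coh\text{-}\cA_X\otimes_{\cO_X}\cA_X^{op})$: it is a quotient of $\cA_X\otimes_{\cO_X}\cA_X^{op}$ via the multiplication map (a cyclic right module generated by $1$), and since $\cA_X\otimes_{\cO_X}\cA_X^{op}$ is coherent over $\cO_X$ on the nice ringed space $(X,\cA_X\otimes_{\cO_X}\cA_X^{op})$, the diagonal bimodule is a coherent right $\cA_X\otimes_{\cO_X}\cA_X^{op}$-module.

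Next I would identify both sides of the isomorphism produced by Theorem \ref{th:Groth_dual_coh}. For the left-hand side, the canonical evaluation-at-$1$ map
\[
\cHom_{\cA_X}(\cA_X,f^!\cG)\xrightarrow{\ \sim\ }f^!\cG
\]
is an isomorphism of right $\cA_X$-modules (here the residual right $\cA_X$-structure on the Hom comes from the left $\cA_X$-structure on the diagonal bimodule, and matches the original right $\cA_X$-structure on $f^!\cG$). Applying $f_*$ then turns the left-hand side into $f_*f^!\cG$ with its natural right $\cA_Y$-module structure. For the right-hand side, $f_*\cF = f_*\cA_X$ is computed in the given DG enhancement by flasque resolutions, so it already represents $\bR f_*\cA_X$ in $D^{co}(\QCoh\text{-}\cA_Y\otimes_{\cO_Y}\cA_Y^{op})$; thus the right-hand side becomes $\bR\cHom_{\cA_Y}(\bR f_*\cA_X,\cG)$.

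Substituting these identifications into the isomorphism of Theorem \ref{th:Groth_dual_coh} yields the desired natural isomorphism
\[
f_*f^!\cG\;\cong\;\bR\cHom_{\cA_Y}(\bR f_*\cA_X,\cG),
\]
functorial in $\cG\in D^{co}(\QCoh\text{-}\cA_Y)$. There is no real obstacle beyond bookkeeping: the only mildly delicate point is tracking bimodule structures so that the residual $\cA_Y$-action on both sides agrees (i.e.\ checking that the abstract $\cHom$-$\otimes$ adjunction furnished by the theorem matches the evaluation map on the diagonal bimodule). This is formal, since the diagonal bimodule represents the identity functor on right modules in a way that is compatible with $f_*$.
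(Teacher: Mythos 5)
Your proposal is correct and takes exactly the same route as the paper: specialize Theorem \ref{th:Groth_dual_coh} with $\cB_X=\cA_X$, $\cB_Y=\cA_Y$, $\cF=\cA_X$, and simplify $\cHom_{\cA_X}(\cA_X,-)$ to the identity. The paper's own proof is the one-line version of what you wrote; your extra checks (coherence of the diagonal bimodule, the evaluation-at-$1$ isomorphism, and compatibility of bimodule structures) are correct bookkeeping that the paper leaves implicit.
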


\begin{proof}Indeed, it suffices to apply Theorem \ref{th:Groth_dual_coh} to $\cB_X=\cA_X,$ $\cB_Y=\cA_Y$ and $\cF=\cA_X.$\end{proof}

Now we turn to the case of matrix factorizations. Again, suppose that $(X,\cA_X)$ and $(X,\cB_X)$ are nice ringed spaces with the same underlying scheme $X.$ Let $W_1, W_2$ be regular functions on $X.$ Then we have a natural bi-functor
\begin{multline*}\bR\cHom_{\cA_X}:D^b(\Coh\mhyphen (\cA_X\tens{\cO_X}\cB_X^{op},W_1))\times D^{co}(\QCoh\mhyphen (\cA_X,W_2))\\\to D^{co}(\QCoh\mhyphen (\cB_X,W_2-W_1)).\end{multline*}

\begin{theo}\label{th:Groth_dual_MF}Keep the notation of Theorem \ref{th:Groth_dual_coh}. Let $W_1, W_2$ be regular functions on $Y.$ Then for $\cF\in D^{abs}_{coh}(\cA_X\tens{\cO_X}\cB_X^{op},f^*W_1),$ $\cG\in D^{co}(\QCoh\mhyphen (\cA_Y,W_2))$ we have a bi-functorial isomorphism
$$f_*\bR\cHom_{\cA_X}(\cF,f^!\cG)\stackrel{\sim}{\to} \bR\cHom_{\cA_Y}(f_*\cF,\cG)$$
in $D^{co}(\QCoh\mhyphen (\cB_Y,W_2-W_1)).$\end{theo}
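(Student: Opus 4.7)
The plan is to adapt the proof of Theorem \ref{th:Groth_dual_coh} line-by-line to the $\Z/2$-graded, potential-twisted setting. All the structural ingredients used there carry over to matrix factorizations on nice ringed spaces: the category $D^{co}(\QCoh\text{-}(\cA_Y,W_2))$ is compactly generated by $D^{abs}(\Coh\text{-}(\cA_Y,W_2))$ (Theorem \ref{th:D^co-comp-gen-MF}), the direct image $f_*$ has a right adjoint $f^!$ computed via injective MF resolutions, injective quasi-coherent MFs are flasque (Proposition \ref{prop:enough_inj}), and restriction to an open subscheme is performed componentwise in the $\Z/2$-grading.

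I would begin by defining the required morphism as the composition
\[
f_*\cHom_{\cA_X}(\cF,f^!\cG)\to \cHom_{\cA_Y}(f_*\cF,f_*f^!\cG)\to \cHom_{\cA_Y}(f_*\cF,\cG),
\]
where the second arrow is the counit of the $(f_*,f^!)$-adjunction. Both sides are continuous in $\cG$, so by Theorem \ref{th:D^co-comp-gen-MF} it suffices to check the statement for $\cG\in D^{abs}(\Coh\text{-}(\cA_Y,W_2))$. For such $\cG$ (and $\cF\in D^{abs}$), the $\cHom$'s compute as genuine Ext-type objects with well-behaved restriction to opens, because of Proposition \ref{prop:enough_inj}.

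The assertion is local on $Y$. Mimicking the proof of Theorem \ref{th:Groth_dual_coh}, I would reduce via Mayer--Vietoris to checking that the map induces an isomorphism after $\bR\Gamma(U,-)$ for each affine open $U\subset Y$. For $U=Y$ this is the plain $(f_*,f^!)$-adjunction applied to $\bR\Hom$, so it remains to establish the base change
\[
j_{f^{-1}(U)}^* f^! \cong (f')^! j_U^*,\qquad f':=f_{\mid f^{-1}(U)},
\]
or equivalently the vanishing
\[
j_{f^{-1}(U)}^* f^!\big|_{D^{co}_{Y\setminus U}(\QCoh\text{-}(\cA_Y,W_2))}=0.
\]
A further Mayer--Vietoris and intersection-of-divisors reduction lets me assume $Y$ is affine and $Y\setminus U=V(\gamma)$ for a single $\gamma\in\cO(Y)$. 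Then $D^{co}_{Y\setminus U}(\QCoh\text{-}(\cA_Y,W_2))$ is the localizing subcategory generated by cones $\Cone(\cH\xrightarrow{\gamma}\cH)$ with $\cH\in D^{abs}(\Coh\text{-}(\cA_Y,W_2))$, and since $\gamma$ is central and $f^!$ is $\cO_Y$-linear, $f^!$ of such a cone becomes $\Cone(f^!\cH\xrightarrow{f^*\gamma}f^!\cH)$, which restricts to zero on $f^{-1}(U)$ because $f^*\gamma$ is invertible there.

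The main obstacle is the bookkeeping of six-functor compatibilities in the $\Z/2$-graded, potential-twisted framework: one must verify that $f^!$ on $D^{co}(\QCoh\text{-}(\cA,W))$ is genuinely $\cO_Y$-linear (so that $f^!(\gamma\cdot -)=f^*\gamma\cdot f^!(-)$), that $D^{co}_{Y\setminus U}(\QCoh\text{-}(\cA_Y,W_2))$ is generated as a localizing subcategory by the $\gamma$-cones on coherent matrix factorizations (which follows by writing a coherent MF supported on $V(\gamma)$ as the union of its $\gamma^n$-torsion subobjects and applying Theorem \ref{th:D^co-comp-gen-MF}), and that the Mayer--Vietoris reduction in the coderived MF world is legitimate — this last point uses flasqueness of injective MFs from Proposition \ref{prop:enough_inj} together with Proposition \ref{prop:f_*_ringedMF}. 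Once these checks are in place the remaining argument is structurally identical to the proof of Theorem \ref{th:Groth_dual_coh}.
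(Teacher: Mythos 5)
Your proposal diverges substantially from the paper's actual argument, and the divergence hides a genuine gap. The paper does \emph{not} rerun the Mayer--Vietoris / single-divisor reduction directly inside the $\mathbb{Z}/2$-graded coderived world. Instead, it first reformulates the statement in the $\mathbb{Z}$-graded (co)derived category of the \emph{abelian} category $MF_{ab}$ of matrix factorizations, reduces there to $\widetilde{\cG}\in D^+(MF_{ab}(\QCoh\text{-}\cA_Y,W_2))$, and then uses the forgetful/zigzag functors $\Phi_0,\Phi_1,\Psi_0,\Psi_1$ from Appendix \ref{app:MF} to reduce the verification to Theorem \ref{th:Groth_dual_coh} itself (checking the morphism after applying each $\Phi_i$). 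Finally it applies the totalization functors $\Tot$ and $\Tot^{\oplus}$, verifying that $f_*$ and $f^!$ commute with them (for $f^!$ this is done by showing, via Proposition \ref{prop:Tot_localizations}, that it preserves $\ker\Tot^{\oplus}$, using $f^!\Psi_i\cong\Psi_i f^!$). The affine/divisor-cone argument is thus inherited from the $\mathbb{Z}$-graded theorem rather than redone.

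The gap in your route is the step ``reduce via Mayer--Vietoris to checking the isomorphism after $\bR\Gamma(U,-)$.'' In the proof of Theorem \ref{th:Groth_dual_coh} this reduction is only legitimate because one first passes to $\cG\in D^{+}$, and then Proposition \ref{prop:D^+toD^+} guarantees that every term of \eqref{eq:appendixf_*f^!} lies in $D^+(\QCoh\text{-}\cB_Y)\subset D^{co}(\QCoh\text{-}\cB_Y)$. Within $D^+$, local acyclicity does detect vanishing, and bounded-below acyclic equals coacyclic. In the $\mathbb{Z}/2$-graded matrix factorization setting there is no analogue of $D^+$: there is no cohomological truncation, and an acyclic $\mathbb{Z}/2$-graded complex of flasque quasi-coherent matrix factorizations need not be coacyclic. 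Consequently, having $\bR\Gamma(U,-)$ of the cone of your candidate morphism acyclic for every open $U$ does not imply the cone is zero in $D^{co}(\QCoh\text{-}(\cB_Y,W_2-W_1))$. You flag several ``bookkeeping'' issues, but the lack of a $D^+$ cap is not bookkeeping — it is the reason the paper routes through $D^{co}(MF_{ab})$ and totalization. Your subsidiary remarks (generation of the torsion subcategory by $\gamma$-cones, $\cO_Y$-linearity of $f^!$) are fine and are implicitly subsumed in the reduction to Theorem \ref{th:Groth_dual_coh}, but without fixing the Mayer--Vietoris step the argument does not close.
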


\begin{proof}
For $\cF\in D^{abs}_{coh}(\cA_X\tens{\cO_X}\cB_X^{op},f^*W_1),$ $\cG\in D^{co}(\QCoh\mhyphen (\cA_Y,W_2))$
we have a natural composition
\begin{equation}\label{eq:appendixMFf_*f^!}f_*\bR\cHom_{\cA_X}(\cF,f^!\cG)\to \bR\cHom_{\cA_Y}(f_*\cF,f_*f^!\cG)\to \bR\cHom_{\cA_Y}(f_*\cF,\cG).\end{equation}
We need to show that the composition \eqref{eq:appendixMFf_*f^!} is an isomorphism in $D^{co}(\QCoh\mhyphen (\cB_Y,W_2-W_1)).$

We first replace the categories $D^{abs}_{coh}(\cA_X\tens{\cO_X}\cB_X^{op},f^*W_1)$ and $D^{co}(\QCoh\mhyphen (\cA_Y,W_2))$ by the categories
$D^b(MF_{ab}(\Coh\mhyphen \cA_X\tens{\cO_X}\cB_X^{op},f^*W_1))$ and $D^{co}(MF_{ab}(\QCoh\mhyphen \cA_Y,W_2)).$ We have the direct image functors on (co)derived categories:
$$f_*:D^b(MF_{ab}(\Coh\mhyphen \cA_X\tens{\cO_X}\cB_X^{op},f^*W_1))\to D^b(MF_{ab}(\Coh\mhyphen \cA_Y\tens{\cO_Y}\cA_Y^{op},W_1)),$$
$$f_*:D^{co}(MF_{ab}(\QCoh\mhyphen \cA_X,f^*W_2))\to D^{co}(MF_{ab}(\QCoh\mhyphen \cA_Y,W_2)).$$
We claim that for any $\tilde{\cF}\in D^b(MF_{ab}(\Coh\mhyphen (\cA_X\otimes_{\cO_X}\cB_X^{op}),f^*W_1)),$ $\tilde{\cG}\in D^{co}(MF_{ab}(\QCoh\mhyphen \cA_Y,W_2))$ the natural composition
\begin{equation}\label{eq:appendixMFabf_*f^!}f_*\bR\cHom_{\cA_X}(\tilde{\cF},f^!\cG)\to \bR\cHom_{\cA_Y}(f_*\tilde{\cF},f_*f^!\tilde{\cG})\to \bR\cHom_{\cA_Y}(f_*\tilde{\cF},\tilde{\cG})\end{equation}
is an isomorphism in $D^{co}(MF_{ab}(\QCoh\mhyphen \cB_Y,W_2-W_1)).$

As in  the proof of Theorem \ref{th:Groth_dual_coh} we easily reduce to the case when $\tilde{\cG}\in D^+(MF_{ab}(\QCoh\mhyphen \cA_Y,W_2)).$ So, we will assume that this is the case. Then, by Proposition \ref{prop:D^+toD^+} we also have that $f^!\tilde{\cG}\in D^+(MF_{ab}(\QCoh\mhyphen \cA_X,f^*W_2)).$

Recall that in Appendix \ref{app:MF} we introduced the functors $\Phi_0,$ $\Phi_1,$ $\Psi_0,$ $\Psi_1.$ It is clear that the functor $f_*$ in each case commutes with $\Phi_0,$ $\Phi_1,$ $\Psi_0,$ $\Psi_1.$ Passing to right adjoints, we see that the functor $f^!$ also commutes with $\Phi_0,$ $\Phi_1,$ $\Psi_0,$ $\Psi_1.$ Further, we have
\begin{equation}\label{eq:Phi_0_of_RHom}\Phi_0(\bR\cHom(-,?))\cong \bR\cHom(\Phi_0(-),\Phi_0(?))\oplus \bR\cHom(\Phi_1(-),\Phi_1(?)),\end{equation}
\begin{equation}\label{eq:Phi_1_of_RHom}\Phi_1(\bR\cHom(-,?))\cong \bR\cHom(\Phi_0(-),\Phi_1(?))\oplus \bR\cHom(\Phi_1(-),\Phi_0(?)).\end{equation}
Combining \eqref{eq:Phi_0_of_RHom}-\eqref{eq:Phi_1_of_RHom} with Theorem \ref{th:Groth_dual_coh}, for any $\tilde{\cF}\in D^b(MF_{ab}(\Coh\mhyphen \cA_X\tens{\cO_X}\cA_X^{op},f^*W_1)),$ $\tilde{\cG}\in D^{+}(MF_{ab}(\QCoh\mhyphen \cA_Y,W_2))$ we obtain the isomorphisms
\begin{equation}\label{eq:isomorphism_for_Phi_0}\Phi_0(f_*\bR\cHom_{\cA_X}(\tilde{\cF},f^!\tilde{\cG}))\cong \Phi_0(\bR\cHom_{\cA_Y}(f_*\tilde{\cF},\tilde{\cG})),\end{equation}
\begin{equation}\label{eq:isomorphism_for_Phi_1}\Phi_1(f_*\bR\cHom_{\cA_X}(\tilde{\cF},f^!\tilde{\cG}))\cong \Phi_1(\bR\cHom_{\cA_Y}(f_*\tilde{\cF},\tilde{\cG}))\end{equation}
in $D^{+}(\QCoh\mhyphen \cB_Y).$ The isomorphism \eqref{eq:isomorphism_for_Phi_0} (resp. \eqref{eq:isomorphism_for_Phi_1}) is exactly the image of \eqref{eq:appendixMFabf_*f^!} under the functor $\Phi_0$ (resp. $\Phi_1$).

Summarising, we see that the morphism \eqref{eq:appendixMFabf_*f^!} 
becomes an isomorphism after applying $\Phi_0$ and $\Phi_1.$ Hence, \eqref{eq:appendixMFabf_*f^!} is itself an isomorphism.

Now, we prove that \eqref{eq:appendixMFf_*f^!} is an isomorphism. Choose any $\tilde{\cF},$ $\tilde{\cG}$ such that $\Tot(\tilde{\cF})\cong\cF,$ $\Tot^{\oplus}(\tilde{\cG})=\cG.$ Clearly, the functors $f_*$ commute with $\Tot$ and $\Tot^{\oplus}.$ In particular, they preserve the kernels of $\Tot$ and $\Tot^{\oplus}.$

We claim that $f^!$ also commutes with $\Tot^{\oplus}.$ Indeed, since $\Tot^{\oplus}$ is a localization, it suffices to show
that $f^!$ preserves the kernel of $\Tot^{\oplus}.$ By Proposition \ref{prop:Tot_localizations}, this kernel is generated by $Cone(\id_{\cH})$
 for all $\cH\in MF_{ab}(\QCoh\mhyphen \cA_Y,W_2).$ It is easy to see that $Cone(\id_{\cH})=\Psi_0(\cH^{odd})\oplus\Psi_1(\cH^{ev}).$ But $f^!$ commutes with
$\Psi_i$ and also commutes with coproducts. Hence, $f^!$ preserves the kernel of $\Tot^{\oplus}.$

Therefore, we have a chain of isomorphisms
\begin{multline*}f_*\bR\cHom_{\cA_X}(\Tot(\tilde{\cF}),f^!\Tot^{\oplus}(\tilde{\cG}))\cong
f_*\bR\cHom_{\cA_X}(\Tot(\tilde{\cF}),\Tot^{\oplus}(f^!\tilde{\cG}))\cong\\ f_*\Tot^{\oplus}(\bR\cHom_{\cA_X}(\tilde{\cF},f^!\tilde{\cG}))\cong \Tot^{\oplus}(f_*\bR\cHom_{\cA_X}(\tilde{\cF},f^!\tilde{\cG}))\cong\Tot^{\oplus}(\bR\cHom_{\cA_Y}(f_*\tilde{\cF},\tilde{\cG}))\cong\\
\bR\cHom_{\cA_Y}(\Tot(f_*\tilde{\cF}),\Tot^{\oplus}(\tilde{\cG}))\cong \bR\cHom_{\cA_Y}(f_*\Tot(\tilde{\cF}),\Tot^{\oplus}(\tilde{\cG}))
\end{multline*}
in $D^{co}(\QCoh\mhyphen (\cB_Y,W_2-W_1)),$
and the composition coincides with the morphism \eqref{eq:appendixMFf_*f^!}. This proves the theorem.\end{proof}

\begin{remark}In the proof of Theorem \ref{th:Groth_dual_MF} we use implicitly that for a nice ringed space $(S,\cA_S)$ and a function $W$ on $S$ the natural functor
$D^{co}(\QCoh\mhyphen (\cA_S,W))\to D^{co}(\Mod\mhyphen (\cA_S,W))$ is fully faithful. For example, if $\cE\in D^{co}(\QCoh\mhyphen (\cA_Y\tens{\cO_Y}\cB_Y^{op},W_1)),$ $\cH\in D^{co}(\QCoh\mhyphen (\cA_Y,W_2))$ then $$\bR\cHom(\cE,\cH)\in D^{co}(\Mod\mhyphen (\cB_Y,W_2-W_1)),$$ but for $\cE\in D^{abs}_{coh}(\cA_Y\tens{\cO_Y}\cB_Y^{op},W_1)$ we have
that $\bR\cHom(\cE,\cH)$ is contained in the essential image of $D^{co}(\QCoh\mhyphen (\cB_Y,W_2-W_1)).$\end{remark}

\begin{cor}\label{cor:projection_formula_MF}Let $f:(X,\cA_X)\to (Y,\cA_Y)$ be a proper morphism of nice ringed spaces, and $W$ a regular functions on $Y.$ Then there is a natural isomorphism of functors
$$f_*f^!\cong \bR\cHom(\bR f_*\cA_X,-):D^{co}(\QCoh\mhyphen (\cA_Y,W))\to D^{co}(\QCoh\mhyphen (\cA_Y,W)).$$
Here $\cA_X$ is considered as an object of $D^{abs}_{coh}(\cA_X\tens{\cO_X}\cA_X^{op},0).$\end{cor}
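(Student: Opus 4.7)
The plan is to derive this corollary as a direct specialization of the Grothendieck duality for matrix factorizations established in Theorem \ref{th:Groth_dual_MF}, in exact analogy with the way Corollary \ref{cor:projection_formula_coh} was deduced from Theorem \ref{th:Groth_dual_coh}. Concretely, I would apply Theorem \ref{th:Groth_dual_MF} with the second nice ringed space structure $(X,\cB_X)=(X,\cA_X)$, $(Y,\cB_Y)=(Y,\cA_Y)$, and both morphisms $(f,\phi)=(f,\psi)$ coming from the given morphism of nice ringed spaces. For the weight functions I would take $W_1=0$ on $X$ (pulled back from the zero function on $Y$) and $W_2=W$, so that $W_2-W_1=W$.

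The key input is then the choice of $\cF$. I would take $\cF=\cA_X$, regarded as an object of $D^{abs}(\Coh\text{-}(\cA_X\otimes_{\cO_X}\cA_X^{op},0))$ via the left and right multiplication actions; this is legitimate because $\cA_X$ is coherent over $\cO_X$ by definition of a nice ringed space, hence coherent as an $\cA_X\otimes_{\cO_X}\cA_X^{op}$-module, and the zero function preserves it trivially so that it defines a matrix factorization of $0$ concentrated in a single parity. With these choices, Theorem \ref{th:Groth_dual_MF} produces, for every $\cG\in D^{co}(\QCoh\text{-}(\cA_Y,W))$, a bi-functorial isomorphism
\[
f_*\cHom_{\cA_X}(\cA_X,f^!\cG)\stackrel{\sim}{\to}\cHom_{\cA_Y}(f_*\cA_X,\cG)
\]
in $D^{co}(\QCoh\text{-}(\cA_Y,W))$. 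I then simplify both sides: the left-hand side reduces to $f_*f^!\cG$ by the tautological identification $\cHom_{\cA_X}(\cA_X,-)\cong \id$, and the right-hand side is by definition $\bR\cHom(\bR f_*\cA_X,\cG)$ (since the direct image on the chosen enhancements computes the derived functor, by Propositions \ref{prop:f_*_ringed} and \ref{prop:f_*_ringedMF}).

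I do not expect any essential obstacle here, since all the real content lies in Theorem \ref{th:Groth_dual_MF}. The only small points to check are that the identification $f_*\cA_X=\bR f_*\cA_X$ is valid in the coderived category with the weight $0$, which follows from flasqueness of injective resolutions guaranteed by Proposition \ref{prop:enough_inj}(3), and that the isomorphism is functorial in $\cG$, which is automatic from the bi-functoriality assertion of Theorem \ref{th:Groth_dual_MF}.
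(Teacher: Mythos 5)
Your proposal is correct and matches the paper's own proof exactly: both deduce the corollary by applying Theorem \ref{th:Groth_dual_MF} with $\cB_X=\cA_X$, $\cB_Y=\cA_Y$, $W_1=0$, $W_2=W$, and $\cF=\cA_X$. The additional simplifications and sanity checks you spell out are all sound and implicit in the paper's one-line argument.
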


\begin{proof}Indeed, it suffices to apply Theorem \ref{th:Groth_dual_MF} to $\cB_X=\cA_X,$ $\cB_Y=\cA_Y,$ $W_1=0,$ $W_2=W$ and $\cF=\cA_X.$\end{proof}


\end{document}